\newtheorem{theorem}{Theorem}[section]
\newtheorem*{theorem*}{Theorem}
\newtheorem{lemma}[theorem]{Lemma}
\newtheorem{proposition}[theorem]{Proposition}
\newtheorem*{conjecture*}{Conjecture}
\newtheorem{example}[theorem]{Example}
\newtheorem{remark}[theorem]{Remark}
\newcommand{\ie}{{\em i.e.}\ }
\newcommand{\confer}{{\em cf.}\ }
\newcommand{\eg}{{\em e.g.}\ }
\newcommand{\Tot}{\mathrm{Tot}}
\newcommand{\opname}[1]{\operatorname{\mathsf{#1}}}
\newcommand{\grmod}{\opname{grmod}\nolimits}
\newcommand{\Grmod}{\opname{Grmod}\nolimits}
\newcommand{\per}{\opname{per}\nolimits}
\newcommand{\Gr}{\opname{Gr}\nolimits}
\newcommand{\dimv}{\underline{\dim}\,}
\newcommand{\Dif}{\opname{Dif}\nolimits}
\newcommand{\ind}{\opname{ind}}
\newcommand{\rep}{\opname{rep}\nolimits}
\newcommand{\Rep}{\opname{Rep}\nolimits}
\newcommand{\cok}{\opname{cok}\nolimits}
\renewcommand{\ker}{\opname{ker}\nolimits}
\newcommand{\Z}{\mathbb{Z}}
\newcommand{\N}{\mathbb{N}}
\newcommand{\Q}{\mathbb{Q}}
\renewcommand{\P}{\mathbb{P}}
\newcommand{\id}{\mathbf{1}}
\newcommand{\Hom}{\opname{Hom}}
\newcommand{\go}{\opname{G_0}}
\newcommand{\Aut}{\opname{Aut}}
\newcommand{\End}{\opname{End}}
\newcommand{\ten}{\otimes}
\newcommand{\ca}{{\mathcal A}}
\newcommand{\cb}{{\mathcal B}}
\newcommand{\cc}{{\mathcal C}}
\newcommand{\cd}{{\mathcal D}}
\newcommand{\ch}{{\mathcal H}}
\newcommand{\cm}{{\mathcal M}}
\newcommand{\cs}{{\mathcal S}}
\newcommand{\ct}{{\mathcal T}}
\newcommand{\mh}{\mathfrak{h}}
\newcommand{\mg}{\mathfrak{g}}
\newcommand{\mn}{\mathfrak{n}}
\newcommand{\n}{\mathfrak{N}}
\begin{document}

\title{The Ringel--Hall Lie algebra of a spherical object}

\author{Changjian Fu}
\address{Changjian Fu\\Department of Mathematics\\SiChuan University\\610064 Chengdu\\P.R.China}
\email{changjianfu@scu.edu.cn}

\author{Dong Yang}
\address{
Dong Yang\\Hausdorff Research Institute for
Mathematics\\ Poppelsdorfer Allee 45\\ D-53115 Bonn\\
Germany} \email{dongyang2002@gmail.com}

\date{December 2008. Last modified on \today}

\begin{abstract} For an integer $w$, let $\cs_w$ be the algebraic triangulated category
generated by a $w$-spherical object. We determine the Picard group
of $\cs_w$ and show that each orbit category of $\cs_w$ is
triangulated and is triangle equivalent to a certain orbit category
of the bounded derived category of a standard tube. When $n=2$, the
orbit category $\cs_w/\Sigma^2$ is 2-periodic triangulated, and we
characterize the
associated Ringel--Hall Lie algebra in the sense of Peng and Xiao.\\
{MSC classification 2010:} 17B99, 18E30, 16E35, 16E45.\\
{Key words:} spherical object, orbit category, Ringel--Hall Lie algebra.
\end{abstract}
\maketitle

\section{Introduction}\label{s:introduction}


The Hall algebra of an algebraic triangulated category (with certain
finiteness conditions) was introduced by To\"en~\cite{Toen06}
(\confer also~\cite{XiaoXu2008}~\cite{ChenXu10}), with the
expectation to realize the quantized/universal enveloping algebra of
a Kac--Moody Lie algebra $\mathfrak{g}$ from the bounded derived
category $\cd_Q$ of finite-dimensional representations over a quiver
$Q$ whose underlying graph is the Dynkin graph of $\mathfrak{g}$.
To\"en was inspired by the work of
Peng--Xiao~\cite{PengXiao97}~\cite{Pengxiao1998}~\cite{PengXiao2000},
in which the authors provided a way to construct Lie algebras
(called \emph{Ringel--Hall Lie algebras}) from 2-periodic
triangulated categories, \eg to construct the Kac--Moody Lie algebra
$\mathfrak{g}$ from the root category $\cd_Q/\Sigma^2$, where
$\Sigma$ is the suspension functor of $\cd_Q$, \confer
also~\cite{XiaoXuZhang06}~\cite{Fu10}.

For an integer $w$, let $\cs_w$ be the algebraic triangulated
category over a field $k$ generated by a $w$-spherical object.
In~\cite{KellerYangZhou09}, the Hall algebra of $\cs_w$ is computed.
Having Peng--Xiao's and To\"en's work in mind, it is natural to ask
what is the Ringel--Hall Lie algebra of the orbit category
$\cs_w/\Sigma^2$.

In order to apply Peng--Xiao's construction, we need to first prove
that the orbit category $\cs_w/\Sigma^2$ is triangulated. In fact,
we will prove that each reasonable orbit category of $\cs_w$ carries
a nice triangle structure. Moreover, we will provide a
characterization of these categories in terms of orbit categories of
bounded derived categories of standard tubes. More precisely, we
have
\begin{theorem}\label{T:triangle-orbit}
Let $n$ be a positive integer.
\begin{itemize}
\item[(a)] There are group isomorphisms $f:\Aut(\cs_w)\rightarrow k^{\times}\times\mathbb{Z}$ and $g:\Aut(\cd^b(\ct_n))\rightarrow k^{\times}\times\mathbb{Z}/n\mathbb{Z}\times\mathbb{Z}$. Here $\ct_n$ is the standard tube of rank $n$ and $\cd^b(\ct_n)$ is the bounded derived category. In particular, $f(\Sigma)=(1,1)$, $g(\tau)=(1,1,0)$ and $g(\Sigma)=(1,0,1)$, where $\tau$ is the Auslander--Reiten translation of $\ct_n$.
\item[(b)] For any $a\in k^{\times}$, the orbit category $\cs_w /f^{-1}(a,n)$ carries a canonical
triangle structure such that the projection functor
$\cs_w\rightarrow\cs_w/f^{-1}(a,n)$ is a triangle functor. Moreover, it
is triangle equivalent to the perfect derived category of the dg
algebra $\tilde{\Lambda}=k\langle
s,r,r^{-1}\rangle/(s^2,sr=(-1)^{nw}ars)$ with trivial differential,
where $\deg(s)=w$ and $\deg(r)=n$.
\item[(c)] Let $m$ be the greatest
common divisor of $n$ and $d=1-w$, $n'=n/m$, $d'=d/m$, $c$ be an inverse
of $d'$ modulo $n'$, and let $a,b\in k^{\times}$ be such that $a=((-1)^d b)^{n'}$. Then the orbit category $\cs_w/f^{-1}(a,n)$ is triangle equivalent
to the orbit category $\cd^b(\ct_{n'})/g^{-1}(b,c,m)$ (which admits
a canonical triangle structure by~\cite[Theorem 9.9]{Keller05}).

\end{itemize}
\end{theorem}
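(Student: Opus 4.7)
The plan is to reduce all three parts to computations with dg enhancements and Keller's theory of triangulated dg orbit categories [Keller05, Theorem 9.9]. I would fix a dg model $\cs_w \simeq \per(\Lambda_w)$ where $\Lambda_w = k[s]/(s^2)$ is the graded algebra with $\deg(s) = w$ and zero differential (realizing the spherical generator $E$), together with an analogous dg model for $\cd^b(\ct_n)$. For part (a), I would argue that any autoequivalence $F$ of $\cs_w$ sends $E$ to another $w$-spherical object, which must be isomorphic to $\lambda \Sigma^i E$ for unique $\lambda \in k^\times$ and $i \in \Z$; conversely, each such pair lifts to a dg autoequivalence of $\Lambda_w$, yielding $f$. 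For $g$, I would exploit the classification of indecomposables of $\ct_n$ together with the compatibility with Auslander--Reiten sequences: an autoequivalence is determined by its value on a chosen simple at the mouth of the tube, producing the extra factor $\Z/n\Z$ coming from the cyclic action of $\tau$.

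For part (b), I would apply Keller's construction directly: lift $F = f^{-1}(a,n)$ to the dg autoequivalence $a\cdot\Sigma^n$ of $\Lambda_w$ and form the dg orbit algebra
\[
\tilde{\Lambda} = \bigoplus_{i \in \Z}\Hom^*_{\Lambda_w}\bigl(\Lambda_w, F^i\Lambda_w\bigr).
\]
Writing $r$ for the basis element of the $i = 1$ summand (which sits in degree $n$), a basis of $\tilde{\Lambda}$ is $\{r^i, sr^i : i\in\Z\}$, and a direct computation using the Koszul sign rule produces the commutation relation $sr = (-1)^{nw}a\,rs$, where the sign comes from commuting elements of degrees $w$ and $n$ past each other and the factor $a$ from the scalar in $F$. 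Then [Keller05, Theorem 9.9] supplies the canonical triangle structure on $\cs_w/F$ together with the equivalence $\cs_w/F \simeq \per(\tilde{\Lambda})$.

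For part (c), I would apply the analogue of (b) to $\cd^b(\ct_{n'})$ with the autoequivalence $G = g^{-1}(b,c,m) = b\cdot\tau^c\Sigma^m$, producing a dg algebra $\tilde{\Pi}$ with $\cd^b(\ct_{n'})/G \simeq \per(\tilde{\Pi})$, and then show that $\tilde{\Pi}$ and $\tilde{\Lambda}$ are quasi-isomorphic under the given parameter matching. The choice $m = \gcd(n, 1-w)$, $n' = n/m$, $d' = (1-w)/m$, and $cd' \equiv 1 \pmod{n'}$ is precisely what makes the lattice of $(\tau,\Sigma)$-bidegrees generated on the tube side align with the lattice generated by the orbit autoequivalence on the $\cs_w$ side; the relation $a = ((-1)^d b)^{n'}$ then normalises the structure constants so that the two dg algebras become isomorphic. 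The main obstacle is the careful sign and parameter bookkeeping in part (c): one must verify that the Koszul signs appearing from the orbit on $\cs_w$ match the signs induced by $\tau^c\Sigma^m$ on the tube, and that the $n'$-th root relation precisely accounts for the scalars on both sides. This is where all the delicate combinatorics of the statement lives.
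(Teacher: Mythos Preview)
Your outline for parts (a) and (c) matches the paper's approach closely. The one imprecision in (a) is that ``$FE \cong \lambda\Sigma^i E$'' does not make sense as stated: the scalar $\lambda$ is not part of the isoclass of the object but records how $F$ acts on the generator $s$ of $\End^*(E)$; the paper separates these as $FS \cong \Sigma^{n(F)}S$ and $Fs = a(F)\Sigma^{n(F)}s$. For (c), computing the dg endomorphism algebra of the generator in the orbit category on the tube side and matching it with $\tilde\Lambda$ is exactly what the paper does.

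The real gap is in part (b). You propose to invoke Keller's theorem \cite[Theorem 9.9]{Keller05} to obtain the triangle structure on $\cs_w/F$, but the hypotheses of that theorem are not available here when $w\neq 1$: Keller's result applies to orbit categories of $\cd^b(\ch)$ for a hereditary abelian Krull--Schmidt category $\ch$ with finite-dimensional $\Hom$ and $\Ext$, together with conditions (1) and (2) relating $F$-orbits to a bounded range of shifts of objects of $\ch$. For $w\neq 1$ there is no such $\ch$ producing $\cs_w$ in a way that makes these conditions hold for $\Sigma^n$. The paper therefore does \emph{not} cite Keller's theorem for this step; it adapts the \emph{proof}. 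Concretely, it works in the model $\cs_w\simeq\cd_{fd}(\Gamma)$ with $\Gamma=k[t]$, $\deg t=1-w$, forms the dg orbit category $\cb$, and shows directly that $H^0\cb$ is extension-closed in $\per\cb$. The nontrivial point is to rule out that the cone $N$ of a morphism between $\Sigma^n$-orbits acquires a summand of the form $\Sigma^p k[t,t^{-1}]$; this uses the classification of indecomposable graded $\Gamma$-modules and a vanishing of $\Hom$ between $k[t,t^{-1}]$ and $\Sigma^n$-orbits of finite-dimensional modules. Only once this is established does the identification with $\per(\tilde\Lambda)$ follow by restricting to the image of the spherical generator. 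So your plan needs an additional argument, specific to $\cs_w$, replacing the appeal to \cite[Theorem 9.9]{Keller05}.
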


In particular, the orbit category $\cs_w/\Sigma^2$ is
triangle equivalent to the root category of the standard homogeneous tube
(\ie the standard tube of rank 1) if $w$ is odd, and to the cluster tube of
rank 2 if $w$ is even. This characterization helps us to obtain the
following description of the associated Ringel--Hall Lie algebra in
terms of a basis and the corresponding structure constants. 

\begin{theorem}\label{t:rh-lie-alg} Let $\mathfrak{g}$ denote the Ringel--Hall Lie
algebra of $\cs_w/\Sigma^2$ with scalar extended to $\mathbb{Q}$.
\begin{itemize}
\item[(a)] If $w$ is odd, then $\mathfrak{g}$ is isomorphic to the
infinite-dimensional Heisenberg Lie algebra.
\item[(b)] If $w$ is even, then $\mathfrak{g}$ has a large center. The quotient of $\mathfrak{g}$ by its center
has a basis $\{a_x|x\in\mathbb{N}\cup\{0\}\}\cup \{b_y,c_y|y\in\mathbb{N}-\frac{1}{2}\}$ and the structure
constants are
\begin{itemize}
\item[$\cdot$] $[a_x,a_{x'}]=0$, $[b_y,b_{y'}]=0$, $[c_y,c_{y'}]=0$;
\item[$\cdot$] $[a_x,b_y]=b_{y+x}+\mathrm{sgn}(y-x)b_{|y-x|}$,
$[a_x,c_y]=-c_{y+x}-\mathrm{sgn}(y-x)c_{|y-x|}$;
\item[$\cdot$] $[b_y,c_{y'}]=2a_{y+y'}-2a_{|y-y'|}$.
\end{itemize}
where for an integer $r$, $\mathrm{sgn}(r)=1$ if $r$ is positive and $\mathrm{sgn}(r)=-1$ if $r$ is negative.
\end{itemize}
\end{theorem}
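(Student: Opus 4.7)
The plan is to use the explicit identification stated just after Theorem~\ref{T:triangle-orbit}: for $w$ odd, $\cs_w/\Sigma^2$ is triangle equivalent to the root category $\cd^b(\ct_1)/\Sigma^2$ of the standard homogeneous tube, and for $w$ even to the cluster tube of rank~$2$. Since Peng--Xiao's Ringel--Hall construction depends only on the $2$-periodic triangulated structure, the computation of $\mathfrak{g}$ reduces to an explicit calculation inside each of these two well-studied models. I would first spell out this reduction by applying Theorem~\ref{T:triangle-orbit}(c) with $n=2$ and $a=1$: for $w$ odd one gets $m=2$, $n'=1$ and $b=1$, while for $w$ even one gets $m=1$, $n'=2$, $c=1$ and $b\in\{\pm 1\}$.

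For part~(a), the indecomposables of the root category of $\ct_1$ are $\{R_\ell\}_{\ell\geq 1}$ together with their suspensions $\{\Sigma R_\ell\}_{\ell\geq 1}$, and all relevant $\Hom$- and $\Ext^1$-spaces are one-dimensional with explicit middle terms coming from the tube structure. Setting $p_\ell=u_{[R_\ell]}$ and $q_\ell=u_{[\Sigma R_\ell]}$ and applying Peng--Xiao's bracket formula in terms of counts of triangles, the intra-family brackets $[p_\ell,p_{\ell'}]$ and $[q_\ell,q_{\ell'}]$ vanish by the symmetry of triangles in a homogeneous tube (for each triangle $R_\ell\to L\to R_{\ell'}$ there is a mirror $R_{\ell'}\to L\to R_\ell$), whereas the cross-brackets $[p_\ell,q_{\ell'}]$ contribute only when $\ell=\ell'$, producing a $\delta_{\ell\ell'}$-multiple of a central generator. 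After extending scalars to~$\Q$ and rescaling, this is exactly the defining presentation of the infinite-dimensional Heisenberg algebra.

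For part~(b), I would parametrise the indecomposables of the cluster tube of rank~$2$ as orbits of $[\Sigma^j M_i^\ell]$ under the defining autoequivalence, where $M_i^\ell$ is the tube indecomposable of mouth $i\in\Z/2$ and quasi-length~$\ell$. Organising orbits by their mouth data matches the three families of the theorem: the ``mouth-symmetric'' orbits, corresponding to even quasi-length, produce the commuting family $a_x$ (including a degenerate $a_0$), while the two mouth variants of an orbit of odd quasi-length~$2y$ give $b_y$ and $c_y$ respectively. Applying Peng--Xiao's formula, and using that extensions in the cluster tube satisfy
\[
\Ext^1_{\mathrm{cl}}(X,Y)\cong \Ext^1_{\cd^b(\ct_2)}(X,Y)\oplus D\Ext^1_{\cd^b(\ct_2)}(Y,X),
\]
the middle terms of the contributing triangles between indecomposables of half-integer indices $y,y'$ have quasi-lengths $2(y+y')$ and $2|y-y'|$, with the sign $\mathrm{sgn}(y-y')$ determined by which object plays the role of sub and which of quotient. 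The factor~$2$ in $[b_y,c_{y'}]$ arises from the two summands in the cluster Ext formula, and the opposite sign in $[a_x,c_y]$ versus $[a_x,b_y]$ reflects the mouth swap under the defining autoequivalence.

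The main obstacle is the bookkeeping in part~(b): tracking signs through the orbit projection, verifying the factor~$2$ in $[b_y,c_{y'}]$, and identifying the center $Z(\mathfrak{g})$ precisely enough to confirm that $\mathfrak{g}/Z(\mathfrak{g})$ has the stated basis indexed by $\N\cup\{0\}$ and $\N-\frac{1}{2}$. Part~(a) is substantially easier because the homogeneous tube has a single mouth, so the only combinatorics is quasi-length addition and the Heisenberg structure is essentially forced by symmetry. A conceptual sanity check in part~(b) is that the resulting Lie algebra should match the affine-type structure expected from the rank-$2$ cluster tube; a computational sanity check is that after division by $|\Aut M|\,|\Aut N|$ and extension of scalars to~$\Q$, all Peng--Xiao Hall numbers reduce to the small rational numbers appearing in the stated structure constants.
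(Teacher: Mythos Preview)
Your reduction via Theorem~\ref{T:triangle-orbit}(c) and your treatment of part~(a) are essentially what the paper does (cf.\ Lemma~\ref{l:periodic-characterization-w-odd} and Proposition~\ref{p:w-odd}). For part~(b), however, there is a genuine gap.

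The cluster Ext formula you invoke is correct as a vector-space identity, but the Peng--Xiao Hall number $F_{YX}^L$ does not decompose as a sum of contributions from the two summands. A morphism in $\cc(X,L)$ has the form $f+g$ with $f\in\cd(X,L)$ and $g\in\cd(X,\tau^{-1}\Sigma L)$, and when \emph{both} $f$ and $g$ are nonzero the cone of $f+g$ can be indecomposable and different from the cone of either $f$ or $g$ alone. The paper gives an explicit example: for $X=\langle 4\rangle$, $L=\langle 3\rangle$, a suitable mixed $f+g$ has cone $\langle -3\rangle$. Such mixed morphisms genuinely contribute to the Hall numbers, so you cannot read off $F_{YX}^L$ from the tube's middle terms alone.

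The paper's key technical step (Proposition~\ref{p:indecomposable} and Lemmas~\ref{l:inj-surj}--\ref{l:iso}) shows that the set $S_2$ of such mixed orbits has cardinality divisible by $q-1$: the $k^\times$-action $t\mapsto f+tg$ permutes the relevant orbits freely without changing the isomorphism class of the cone. Only after this is established can one reduce $F_{YX}^L$, modulo $q-1$, to the ``pure'' contributions $S_1\cup S_3$, which are then computed via Peng--Xiao's formulas in $\cd$ (Proposition~\ref{p:v}). Your proposal skips this analysis entirely; without it the bracket formulas in part~(b) are unjustified.

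A smaller point: your explanation of the factor~$2$ in $[b_y,c_{y'}]$ is not quite right. In the integral Lie algebra (Theorem~\ref{t:w-even}, relation 7)) the bracket produces terms of the form $u_{2n}-u_{-2n}$ with coefficients $\pm 1$. The factor~$2$ appears only after quotienting by the center, which is spanned by the elements $u_{2x}+u_{-2x}$, so that $\bar u_{-2n}=-\bar u_{2n}$ and hence $u_{2n}-u_{-2n}\mapsto 2a_n$.
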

We remark that the cluster tube of rank $2$ is not proper, and we
refer to \cite{XiaoXuZhang06} for Lie algebras constructed from
non-proper $2$-periodic triangulated categories ({\it cf.}  also
Section \ref{s:Ringel-Hall}). The Lie algebra obtained in
Theorem~\ref{t:rh-lie-alg} (b) seems likely to be  the first Lie
algebra computed from a non-proper 2-periodic triangulated category.

This paper is organized as follows. In
Section~\ref{s:preliminaries}, we give some preliminary results,
including results on derived categories of dg categories,
triangulated orbit categories, the algebraic triangulated category
$\cs_w$ generated by a $w$-spherical object and the bounded derived
category of a standard tube. In particular, we prove part (a) of
Theorem~\ref{T:triangle-orbit}. In
Section~\ref{s:triangle-structure}, we prove
Theorem~\ref{T:triangle-orbit} (b) and (c). The proof of the first
part of (b) is a variant of Keller's proof of~\cite[Theorem
4]{Keller05}. In the proof of the second part of (b) and the part
(c), we compute the dg algebras for both orbit categories and
compare them. Section~\ref{s:Ringel-Hall} is devoted to the
characterization (Theorem~\ref{t:rh-lie-alg}) of the Ringel--Hall
Lie algebras associated to $\cs_w/\Sigma^2$. In an appendix, by
using covering and the universal property of orbit categories we
construct an explicit triangle equivalences between the two orbit
categories in Theorem~\ref{T:triangle-orbit} (c) in the case when
$n$ is even and $a=b=1$.


The second-named author gratefully acknowledges support from
Max-Planck-Institut f\"ur Mathematik in Bonn and from Hausdorff
Research Institute for Mathematics. He thanks Dong Liu for helpful
conversations. Both authors are deeply grateful to Bernhard Keller
for pointing out to them the similarity between the orbit category
$\cs_{3}/\Sigma^2$ and the root category of a homogeneous tube
(which inspired the study in the second part of
Section~\ref{s:triangle-structure}) and for his great help in
finding an error in a previous version. They thank Martin Kalck for
pointing out some typos to them.

\section{Preliminaries}\label{s:preliminaries}

Let $k$ be a field.


\subsection{The derived category of a dg category}\label{ss:derived-category} We
follow~\cite{Keller94}. Let $\ca$ be a differential graded (=dg)
$k$-category (we identify a dg $k$-algebra with a dg $k$-category
with one object). Let $\Dif\ca$ be the dg category of (right) dg modules over $\ca$. For two dg modules $M$ and $N$ over $\ca$ and for an integer $n$, the degree $n$ component of the morphism complex $\Hom_{\Dif\ca}(M,N)$ consists of the homogeneous morphisms from $M$ to $N$ of degree $n$, here $M$ and $N$ are considered as graded modules over the underlying graded category of $\ca$. The differential of $\Hom_{\Dif\ca}(M,N)$ is induced from the differentials of $M$ and $N$. The shift of complexes is a dg functor $\Sigma:\Dif\ca\rightarrow\Dif\ca$, which takes a homogeneous morphism $f$ of degree $n$ to $(-1)^n f$.

The derived category $\cd\ca$ of $\ca$ has the same objects as $\Dif\ca$ and its morphisms are
obtained from the closed morphisms in $\Dif\ca$ of degree $0$ by formally inverting all
quasi-isomorphisms. It is triangulated with suspension functor
$\Sigma$ the shift functor. Let $\per\ca$ denote the smallest
triangulated subcategory of $\cd\ca$ containing all free dg
$\ca$-modules and closed under taking direct summands. Let
$\cd_{fd}\ca$ be the full subcategory of $\cd\ca$ consisting of
those dg modules which has finite-dimensional total cohomology. It
is a triangulated subcategory of $\cd\ca$.


Let $\ca$ be a dg $k$-category. We define $H^0\ca$ to be the
$k$-category which has the same objects as $\ca$ and whose morphism
space $\Hom_{H^0\ca}(X,Y)$ between two objects $X$ and $Y$ is the
zeroth cohomology of the complex $\Hom_{\ca}(X,Y)$. The Yoneda embedding
$\ca\hookrightarrow\Dif\ca$ of dg categories induces an embedding
$H^0\ca\hookrightarrow\cd\ca$ of $k$-categories. In particular, we have for $A\in\ca$ and the corresponding  free module $A^\wedge=\Hom_{\ca}(?,A)$
\[H^*\Hom_{\ca}(A,A)=\bigoplus_{p\in\mathbb{Z}}\Hom_{\cd\ca}(A^{\wedge},\Sigma^p A^{\wedge}).\]

Let $\ca$ and $\cb$ be two dg $k$-categories. A dg
$\ca$-$\cb$-bimodule is by definition a dg module over the tensor
product $\cb \ten\ca^{op}$. Given a dg $\ca$-$\cb$-bimodule $X$, one
can define a pair of adjoint standard triangle functors
\[\xymatrix{\cd\ca\ar@<.7ex>[r]^{T_X}&\cd\cb\ar@<.7ex>[l]^{H_X}.}\]

\subsection{Orbit categories}\label{ss:orbit-category} We follow~\cite{Keller05}.

Let $\cc$ be a $k$-category, and $F$ be an
auto-equivalence of $\cc$. The orbit category $\cc/F$ is defined as
the category whose objects are the same as those of $\cc$ and the
morphism space $\Hom_{\cc/F}(X,Y)$ between two objects $X$ and $Y$
is
\[\Hom_{\cc/F}(X,Y)=\bigoplus_{p\in\mathbb{Z}}\Hom_{\cc}(X,F^p Y).\]

The following remarkable result is due to Keller.

\begin{theorem}[\cite{Keller05} Theorem 9.9]\label{t:keller's-theorem} Let $\ch$ be a small hereditary abelian $k$-category with
the Krull--Schmidt property where all morphism and extension spaces
are finite-dimensional. Let $F:\cd^b(\ch)\rightarrow \cd^b(\ch)$ be
a standard equivalence with a dg lift. Suppose
\begin{itemize}
\item[(1)] for each indecomposable $U$ of $\ch$, only finitely many
objects $F^iU,i\in\mathbb{Z}$, lie in $\ch$;
\item[(2)] there is an integer $N\geq 0$ such that the $F$-orbit of
each indecomposable of $\cc$ contains an object $\Sigma^nU$ for some
$0\leq n\leq N$ and some indecomposable $U$ of $\ch$.
\end{itemize}
Then the orbit category $\cc/F$ admits a natural triangle structure
such that the projection functor $\cc\rightarrow\cc/F$ is a triangle
functor.
\end{theorem}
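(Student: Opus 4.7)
The plan is to realize $\cc/F$ as a full subcategory of the perfect derived category of an auxiliary dg algebra built from a dg lift of $F$, and then to show that this subcategory is stable under shifts and cones so that it inherits a canonical triangulated structure. Since $F$ is assumed to admit a dg lift and $\cc=\cd^b(\ch)$ admits a dg enhancement, the natural auxiliary object is a tensor dg algebra attached to the bimodule that represents $F$, built along the lines of the standard-functor formalism recalled in Section~\ref{ss:derived-category}.

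Concretely, I would first choose a dg $k$-category $\ca$ together with a triangle equivalence $\per\ca\simeq\cc$, and a dg $\ca$-$\ca$-bimodule $X$ whose associated standard functor $T_X:\cd\ca\to\cd\ca$ is isomorphic to $F$; this is exactly what the dg-lift hypothesis provides. I then form the dg category
\[
B=\bigoplus_{p\in\Z} X^{\ten_{\ca} p},
\]
in which the degree-$p$ slot implements the action of $F^p$. A direct adjunction computation yields, on cohomology,
\[
\Hom_{\per B}(M\ten_{\ca}^{\mathbf{L}} B,\; N\ten_{\ca}^{\mathbf{L}} B)\;\cong\;\bigoplus_{p\in\Z}\Hom_{\cc}(M,\, F^p N),
\]
so the assignment $M\mapsto M\ten_{\ca}^{\mathbf{L}} B$ factors through a fully faithful functor $\Phi:\cc/F\to\per B$; by construction, its precomposition with the projection $\cc\to\cc/F$ is a triangle functor.

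The crux is then to verify that the essential image of $\Phi$ is a triangulated subcategory of $\per B$. Closure under $\Sigma$ is automatic, so the real work is closure under cones: one must argue that the mapping cone in $\per B$ of any morphism between objects in the image is itself in the image. This is where the three hypotheses must combine. The hereditary hypothesis on $\ch$ ensures that every object of $\cc=\cd^b(\ch)$ is a finite direct sum of shifts of indecomposables of $\ch$, reducing the check to morphisms between such shifted heart objects. Condition (1) then guarantees that the sums $\bigoplus_p\Hom_{\cc}(U,F^p V)$ are finite for indecomposables $U,V\in\ch$, so that the Hom spaces of $\cc/F$ are actually small and the displayed identification is meaningful. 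Finally, condition (2) supplies the uniform bound $N$ that turns this finite check on the heart into a genuine cone-closure statement on all of $\cc$. I expect this cone-closure step to be the main obstacle, since it requires combining the three hypotheses in a careful and nontrivial way; once it is in hand, the triangulated structure of $\per B$ restricts along $\Phi$ to $\cc/F$, and the projection $\cc\to\cc/F$ becomes a triangle functor by the construction of $\Phi$.
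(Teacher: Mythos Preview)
The paper does not prove this theorem; it is quoted from \cite{Keller05} and accompanied only by the one-sentence remark that ``a dg orbit category is defined and the triangle structure of $\cc/F$ comes from the (nice) dg structure of the dg orbit category.'' Your proposal is exactly this strategy: build the dg orbit object $B$ from a bimodule lift of $F$, embed $\cc/F$ fully faithfully into $\per B$, and then prove the image is closed under cones. So at the level of approach you are on the same track as Keller and as the paper's summary.

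Two remarks are worth making. First, your object $B=\bigoplus_{p\in\Z}X^{\ten_\ca p}$ is really Keller's dg orbit category, and writing negative tensor powers presupposes that $X$ is invertible as a bimodule; this is fine here since $F$ is an equivalence, but in Keller's formulation the orbit category is built as a colimit that avoids having to name $X^{-1}$ explicitly. Second, and more substantively, you correctly flag the cone-closure step as the heart of the matter but do not carry it out; your explanation of how hypotheses (1) and (2) enter is only heuristic. If you want to see this step executed, the paper's own Theorem~\ref{t:the-triangle-structure} runs precisely this argument in a concrete setting: one applies the right adjoint $\pi_\rho$ of the projection and the embedding $i$ into $\cd\Gamma$ to a triangle in $\per\cb$, reduces to showing the third term is the $F$-orbit of something in $\cd_{fd}(\Gamma)$, and then uses the hereditary structure (via $H^*$) together with the classification of indecomposables to rule out the ``bad'' summands. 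That is the template you would follow, with hypotheses (1) and (2) replacing the explicit module-theoretic control available in that special case.
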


In Keller's proof, a dg orbit category is defined and the triangle
structure of $\cc/F$ come from the (nice) dg structure of the dg
orbit category. In this case, we say that the orbit category $\cc/F$
\emph{admits a canonical triangle structure}. In particular, the
projection functor $\pi_{\cc}:\cc\rightarrow\cc/F$ is a triangle
functor given by a tensor functor.

\subsection{The algebraic triangulated category generated by a
spherical object}\label{ss:spherical}

Let $\cc$ be a triangulated $k$-category. For an integer $w$, an
object $S$ of $\cc$ is called a \emph{$w$-spherical object} if the
graded endomorphism algebra
\[
\bigoplus_{p\in\Z} \Hom_{\cc}(S,\Sigma^p S)
\]
is isomorphic to $\Lambda=k[s]/(s^2)$, where $s$ is of degree $w$.
Let $\cs_w$ be the algebraic triangulated category over $k$
generated by a $w$-spherical object $S$. The following result was
proved in~\cite{KellerYangZhou09}.

\begin{theorem}\label{t:spherical}
The category $\cs_w$ is triangle equivalent to $\per(\Lambda)$ and
to $\cd_{fd}(\Gamma)$, where $\Gamma=k[t]$ if $w\neq 1$, $\Gamma=k[[
t]]$ if $w=1$ with $\deg{t}=1-w$. Here both $\Lambda$ and $\Gamma$
are viewed as dg algebras with trivial differentials. In particular,
$\Sigma^n S$, $n\in\mathbb{Z}$, are precisely the $w$-spherical
objects in $\cs_w$.
\end{theorem}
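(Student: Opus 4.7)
The plan is to realize $\cs_w$ first as $\per(\Lambda)$ via a formality argument, and then to pass to $\cd_{fd}(\Gamma)$ by Koszul duality. Since $\cs_w$ is algebraic, I would choose a dg enhancement $\ca$ and let $E=\Hom_{\ca}(S,S)$ be the dg endomorphism algebra of a lift of $S$; the sphericity hypothesis gives $H^*(E)\cong\Lambda=k[s]/(s^2)$. The key step is to show that $\Lambda$ is intrinsically formal, so that $E\simeq\Lambda$. Via Kadeishvili's theorem, $H^*(E)=\Lambda$ inherits a minimal $A_\infty$-structure extending its multiplication, and by strict unitality the only possibly non-zero higher product is $m_n(s,\dots,s)$ for $n\geq 3$, whose internal degree is $nw+2-n$. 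Since $\Lambda$ lives only in degrees $0$ and $w$, the equations $nw+2-n=0$ and $nw+2-n=w$ admit no integer solution with $n\geq 3$, so all higher products vanish and $E$ is formal. Because $S$ generates $\cs_w$ as a triangulated category, the standard dg-Morita argument via $\RHom_\ca(S,-)$ then yields $\cs_w\simeq\per(\Lambda)$.

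For the second equivalence, I would compute the Koszul dual explicitly. The minimal free resolution
\[
\cdots\to\Sigma^{-2w}\Lambda\xrightarrow{\cdot s}\Sigma^{-w}\Lambda\xrightarrow{\cdot s}\Lambda\to k
\]
of the trivial module $k=\Lambda/(s)$ gives $\Gamma:=\RHom_\Lambda(k,k)=k[t]$ with $\deg t=1-w$ and trivial differential. The functor $\RHom_\Lambda(k,-)$ sends $\Lambda$ to a shift of the simple $\Gamma$-module $k=\Gamma/(t)$, so it maps $\per(\Lambda)$ into the thick closure of $k$ in $\cd(\Gamma)$. For $w\neq 1$ the degree of $t$ is non-zero, forcing $t$ to act nilpotently on every finite-dimensional $\Gamma$-module, so this thick closure is all of $\cd_{fd}(\Gamma)$ and we obtain the desired triangle equivalence $\per(\Lambda)\simeq\cd_{fd}(\Gamma)$. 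For $w=1$ the degree of $t$ is $0$ and $k[t]$ admits finite-dimensional modules on which $t$ acts with non-nilpotent Jordan blocks; such modules lie outside the image, so the correct dual is the completion $k[[t]]$, whose finite-dimensional modules automatically have $t$ nilpotent.

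For the classification of spherical objects, I would work in $\cd_{fd}(\Gamma)$. Any $w$-spherical object is indecomposable because its degree-$0$ endomorphism ring is $k$; using the trivial differential (or, for $w=1$, the local structure of $k[[t]]$), the indecomposable objects are precisely the shifts of $\Gamma/(t^n)$ for $n\geq 1$. A direct $\Ext$-computation shows that $\End_{\cd_{fd}(\Gamma)}(\Gamma/(t^n))$ has degree-$0$ dimension equal to $n$, so the sphericity condition forces $n=1$. Under the equivalence $\Gamma/(t)=k$ corresponds to $S$ up to suspension, so the $w$-spherical objects in $\cs_w$ are exactly the $\Sigma^n S$ for $n\in\Z$.

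The principal obstacle I anticipate is the intrinsic-formality step: one must verify carefully that no integer $w$ admits an exceptional $A_\infty$-product, and the same degenerate degree-count at $w=1$ is what forces the passage to the completion $k[[t]]$ on the Koszul-dual side. This last point should be argued explicitly (e.g., by identifying the thick closure of the image of $S$) rather than deferred to a black-box Koszul-duality theorem.
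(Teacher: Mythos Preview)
The paper does not prove Theorem~\ref{t:spherical}; it cites \cite{KellerYangZhou09} for the result. Your strategy is essentially the one used there, and indeed the paper itself alludes to it in the proof of Lemma~\ref{l:dg-alg-derived-tube}: ``For degree reasons, there is no non-trivial $A_\infty$-structure on $\Lambda'$ such that the identities of the simple objects are strict units (\confer the proof of~\cite[Theorem 2.1]{KellerYangZhou09}).'' So the formality-via-degree-count argument for $\cs_w\simeq\per(\Lambda)$ is exactly what is intended, and your Koszul-duality passage to $\cd_{fd}(\Gamma)$ is the standard continuation. Your observation that for $w=1$ the dual is naturally the completion is also on target: the semifree resolution of $k$ over $\Lambda$ is infinite, and when $\deg t=0$ the dg endomorphism algebra is a product rather than a sum in each degree, producing $k[[t]]$ directly rather than as an ad hoc fix.

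One small imprecision in your last paragraph: for $w\neq 1$ the degree-$0$ endomorphism space $\Hom_{\cd_{fd}(\Gamma)}(\Gamma/(t^n),\Gamma/(t^n))$ is one-dimensional (scalars only, since $t$ has nonzero degree), not $n$-dimensional; your claim is correct only for $w=1$. The argument you want for general $w$ is that the \emph{total} graded endomorphism algebra $\bigoplus_p\Hom(\Gamma/(t^n),\Sigma^p\Gamma/(t^n))$ has dimension $2n$, so the sphericity condition (total dimension $2$) forces $n=1$. This is a cosmetic fix; the conclusion and the overall line of argument are sound.
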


Let $\Aut(\cs_w)$ be the group of triangle automorphisms of $\cs_w$
which admit dg lifts. The suspension functor $\Sigma$ belongs to
$\Aut(\cs_w)$ and is central. Let $a$ be a nonzero element of $k$.
We define $\varphi_a$ to be the automorphism of $\Lambda$ taking $s$
to $as$. The induced push-out functor
$\varphi_{a,*}:\cs_w\rightarrow\cs_w$ also belongs to $\Aut(\cs_w)$.
In fact, $\Aut(\cs_w)$ is generated by $\Sigma$ and $\varphi_{a,*}$,
$a\in k^\times$.

\begin{lemma}\label{l:picard-group-spherical} The group $\Aut(\cs_w)$ is isomorphic to $k^{\times}\times \mathbb{Z}$.
\end{lemma}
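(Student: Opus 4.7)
The plan is to construct an explicit isomorphism $f: \Aut(\cs_w) \to k^\times \times \Z$ sending $F$ to a pair $(a(F), n(F))$, where $n(F)$ records how $F$ shifts the spherical object and $a(F)$ records how $F$ rescales the graded endomorphism algebra. The second coordinate is forced on us by Theorem~\ref{t:spherical}: since $F$ is a triangle equivalence, $F(S)$ is again $w$-spherical, hence isomorphic to $\Sigma^{n(F)} S$ for a unique integer $n(F)$. For the first coordinate, I would fix any isomorphism $F(S) \iso \Sigma^{n(F)}S$ and use it to identify the induced graded algebra isomorphism $\End^\ast_{\cs_w}(S) \to \End^\ast_{\cs_w}(F(S)) \cong \End^\ast_{\cs_w}(S) = \Lambda$. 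Every graded $k$-algebra automorphism of $\Lambda = k[s]/(s^2)$ is of the form $s \mapsto as$ for some $a \in k^\times$, which produces $a(F)$; a different choice of the isomorphism $F(S) \iso \Sigma^{n(F)}S$ changes this by conjugation by an element of $\End^0_{\cs_w}(\Sigma^{n(F)}S) = k^\times$, which is inner, hence trivial, so $a(F)$ is well defined.

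Next I would verify that $f$ is a group homomorphism: additivity of $n$ comes from the chain $F_1F_2(S) \cong F_1(\Sigma^{n_2}S) \cong \Sigma^{n_2}F_1(S) \cong \Sigma^{n_1+n_2}S$, while multiplicativity of $a$ is composition of algebra automorphisms $s \mapsto a_1a_2 s$. Surjectivity is then immediate from the explicit computations $f(\Sigma) = (1,1)$ and $f(\varphi_{a,*}) = (a, 0)$, as noted just before the statement.

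The heart of the proof is injectivity. Suppose $f(F) = (1,0)$, so $F(S) \cong S$ and the induced action on $\Lambda = \End^\ast_{\cs_w}(S)$ is the identity. Using the equivalence $\cs_w \simeq \per\Lambda$ from Theorem~\ref{t:spherical}, a dg-liftable triangle auto-equivalence $F$ is, up to isomorphism, the derived tensor product $- \lten_\Lambda X$ with an invertible dg $\Lambda$-$\Lambda$-bimodule $X \in \cd(\Lambda \otimes \Lambda\op)$. The right $\Lambda$-module $X$ is the image of $S$, hence isomorphic in $\per\Lambda$ to $\Sigma^{n(F)}\Lambda = \Lambda$; the commuting left $\Lambda$-action is then determined by where $s$ is sent, and is precisely read off by the graded algebra automorphism $a(F)$ of $\End^\ast(S)$. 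Under our hypothesis this left action agrees with the right action, so $X$ is quasi-isomorphic to the regular bimodule $\Lambda$, i.e.\ $F \cong \id$.

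The main obstacle is the last step, since it requires invoking derived Morita theory for dg algebras to know that every dg-liftable auto-equivalence is representable by an invertible dg bimodule and that the bimodule is reconstructed up to quasi-isomorphism by its underlying right module together with the left multiplication by $s$. Once this input is granted, the computation of the bimodule reduces to the two invariants $(a(F), n(F))$ already extracted, yielding injectivity and completing the isomorphism $\Aut(\cs_w) \cong k^\times \times \Z$.
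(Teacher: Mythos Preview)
Your construction of the map $f$, the check that it is a homomorphism, and the surjectivity argument via $f(\varphi_{a,*}\Sigma^n)=(a,n)$ all coincide with the paper's proof. The difference lies in the injectivity step. The paper argues directly at the triangulated level: an $F$ in the kernel fixes $S$ up to isomorphism and acts as the identity on every morphism space $\Hom_{\cs_w}(S,\Sigma^p S)$; since $S$ is a thick generator of $\cs_w$, a triangle endofunctor agreeing with the identity on the generator and its graded endomorphisms must be isomorphic to the identity functor. Your route through invertible dg $\Lambda$-bimodules is also legitimate and has the merit of making the dg-lift hypothesis visibly relevant, but the last implication---that if the underlying right $\Lambda$-module of $X$ is $\Lambda$ and the induced left action of $s$ matches the right one then $X$ is the diagonal bimodule in $\cd(\Lambda\otimes\Lambda\op)$---is a formality statement that still requires justification (for instance, by lifting the right-module quasi-isomorphism $\Lambda\to X$ to an honest bimodule map and checking it is a quasi-isomorphism). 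The paper's generator argument avoids this detour entirely.
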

\begin{proof} Let $F$ be an element of $\Aut(\cs_w)$. Then $FS$ is also a $w$-spherical object, so it
follows from Theorem~\ref{t:spherical} that $FS\cong\Sigma^{n(F)} S$
for some $n(F)\in\mathbb{Z}$. Moreover, there is a nonzero element
$a(F)$ of $k$ such that $Fs=a(F)\Sigma^{n(F)}s$. Sending $F$ to
$(a(F),n(F))$ defines a group homomorphism $f:\Aut(\cs_w)\rightarrow
k^{\times}\times \Z$. The map $f$ is surjective because for any
$a\in k^\times$ and any $n\in\mathbb{Z}$ we have
$f(\varphi_{a,*}\Sigma^n)=(a,n)$. It is injective because any
element in the kernel is isomorphic to the identity functor on the
objects $\Sigma^p S$ ($p\in\mathbb{Z}$) and on all the morphism
spaces $\Hom_{\cs_w}(S,\Sigma^p S)$ ($p\in\mathbb{Z}$) and, since
$S$ generates $\cs_w$, such a functor must be isomorphic to the
identify functor.
\end{proof}

\subsection{Standard tubes}\label{ss:standard-tubes}

Let $n$ be a positive integer. Let $\ct_n$ be the standard tube of rank $n$, \ie the hereditary abelian category of finite-dimensional nilpotent representations of a cyclic quiver with $n$ vertices.

\begin{lemma}\label{l:dg-alg-derived-tube}
The derived category $\cd^b(\ct_n)$ is triangle equivalent to $\per(\Lambda')$, where $\Lambda'$ is the quotient of the path algebra of the graded cyclic quiver with each arrow in degree $1$ modulo all path of length $2$.
\end{lemma}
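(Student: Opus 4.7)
The plan is to exhibit a compact generator of $\cd^b(\ct_n)$ whose dg endomorphism algebra is quasi-isomorphic to $\Lambda'$, and then invoke Keller's recognition theorem for perfect derived categories. The natural candidate is $T=\bigoplus_{i=1}^n S_i$, the sum of the simple objects of $\ct_n$, one at each vertex of the cyclic quiver. Since every object of $\ct_n$ is by definition a nilpotent representation and admits a finite composition series with factors among the $S_i$, the object $T$ generates $\ct_n$ under extensions and hence generates $\cd^b(\ct_n)$ as a thick triangulated subcategory; $\Hom$-finiteness of $\ct_n$ makes $T$ automatically compact.

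Next I would compute the graded endomorphism algebra $E^{\ast}=\bigoplus_{p\geq 0}\Ext^p_{\ct_n}(T,T)$. Hereditarity of $\ct_n$ kills $\Ext^p$ for $p\geq 2$. In degree $0$, $\Hom_{\ct_n}(S_i,S_j)=\delta_{ij}\,k$ produces $n$ orthogonal idempotents. In degree $1$, each $\Ext^1(S_i,S_{i+1})$ is one-dimensional, detecting the non-split extension along the arrow at vertex $i$, and all other components vanish. The Yoneda product of any two degree-$1$ classes lies in $\Ext^2=0$. This description coincides precisely with $\Lambda'$ as a graded algebra.

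Then I would fix a dg enhancement of $\cd^b(\ct_n)$ (for instance bounded complexes of injectives in an ambient Grothendieck category, or a dg quotient of the category of bounded complexes over $\ct_n$) and let $E$ denote the dg endomorphism algebra of $T$ there; by construction $H^\ast E\cong\Lambda'$. The crux is formality of $E$: by Kadeishvili's transfer theorem, $\Lambda'$ carries a minimal $A_\infty$-structure with $m_2$ the given multiplication and higher operations $m_n$ of degree $2-n$ for $n\geq 3$. On $n$ inputs all of degree $1$, $m_n$ would land in degree $n+(2-n)=2$, where $\Lambda'$ vanishes; on inputs involving an idempotent the strict unit axiom forces $m_n=0$. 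Thus every $m_n$ with $n\geq 3$ vanishes, the $A_\infty$-structure is trivial, and $E\simeq\Lambda'$ as dg algebras.

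Finally, applying Keller's recognition theorem~\cite{Keller94} to the compact generator $T$ yields a triangle equivalence $\cd^b(\ct_n)\simeq\per(E)\simeq\per(\Lambda')$, as claimed. The main conceptual step is the formality claim, but the degree count above makes it essentially automatic; the only real work is setting up the dg enhancement in which the endomorphism algebra of $T$ can be computed concretely.
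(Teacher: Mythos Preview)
Your proposal is correct and follows essentially the same approach as the paper: identify $\Lambda'$ as the graded Ext-algebra of the sum of simples, argue formality by the degree count (the paper phrases this as ``for degree reasons, there is no non-trivial $A_\infty$-structure on $\Lambda'$ such that the identities of the simple objects are strict units''), and then apply a recognition theorem. The only cosmetic difference is that the paper cites Lef\`evre-Hasegawa's thesis~\cite{Lefevre03} for the final step rather than~\cite{Keller94}, but the underlying argument is the same.
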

\begin{proof} It is easy to check that $\Lambda'$ is the graded endomorphism algebra of the simple objects of $\ct_n$. For degree reasons, there is no non-trivial $A_\infty$-structure on $\Lambda'$ such that the identities of the simple objects are strict units(\confer the proof of~\cite[Theorem 2.1]{KellerYangZhou09}). So by~\cite[Theorem 7.6.0.6]{Lefevre03} we obtain the derived result.
\end{proof}

Let us denote by $\alpha_1:1\rightarrow 2,\ldots,\alpha_n:n\rightarrow 1$ the arrows in the quiver of $\Lambda'$.
For $a\in k^{\times}$, we define $\psi_a$ as the unique automorphism of $\Lambda'$ taking $\alpha_1$ to $a\alpha_1$, and $\alpha_i$ to $\alpha_i$, $i=2,\ldots,n$. We define $c$ as the unique automorphism of $\Lambda'$ taking the vertex $i$ to ${i+1}$, $i=1,\ldots,n$. The push-out $c_*$ is exactly the Auslander--Reiten translation $\tau$.

\begin{lemma}\label{l:picard-group-derived-tube}
The group $\Aut(\cd^b(\ct_n))$ is isomorphic to $k^{\times}\times \mathbb{Z}/n\mathbb{Z}\times\mathbb{Z}$.
\end{lemma}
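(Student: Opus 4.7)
The plan is to adapt the argument of Lemma~\ref{l:picard-group-spherical}, replacing the single spherical generator $S$ of $\cs_w$ with the system $S_1,\ldots,S_n$ of simples of $\ct_n$, whose direct sum is a compact generator of $\cd^b(\ct_n)$ with graded endomorphism algebra $\Lambda'$. To any $F\in\Aut(\cd^b(\ct_n))$ I will attach a triple $(a(F),\ell(F),m(F))\in k^{\times}\times\mathbb{Z}/n\mathbb{Z}\times\mathbb{Z}$ and prove that the resulting assignment is a group isomorphism.

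First I would show that $F$ shifts the standard heart $\ct_n$ uniformly. Since $\ct_n$ is hereditary, each $F(S_i)$ is a shift $\Sigma^{m_i}X_i$ of some indecomposable exceptional $X_i\in\ct_n$; the identity $\Hom^*(F(S_i),F(S_{i+1}))=k$ concentrated in degree $1$, together with the vanishing of $\Hom^p_{\ct_n}(-,-)$ outside $p\in\{0,1\}$, forces $m_i-m_{i+1}\in\{-1,0\}$, and cyclic summation then makes all $m_i$ equal to a common integer $m$. Because every object of $\ct_n$ is a finite iterated extension of simples and triangulated functors respect extensions, $F$ sends the whole heart $\ct_n$ into $\Sigma^m\ct_n$; the same argument applied to $F^{-1}$ gives an analogous shift $m'$, and $\id=F^{-1}F$ forces $m+m'=0$. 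Consequently $F\circ\Sigma^{-m}$ preserves $\ct_n$ and restricts to an auto-equivalence of the abelian category $\ct_n$, which necessarily sends simples to simples. Thus $F(S_i)\cong\Sigma^mS_{\sigma(i)}$ for some permutation $\sigma$ of $\mathbb{Z}/n\mathbb{Z}$, and the cyclic shape of the $\Ext^1$-quiver of $\{S_1,\ldots,S_n\}$ forces $\sigma$ to be a cyclic rotation $i\mapsto i+\ell$.

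Composing $F$ with $\Sigma^{-m}\tau^{-\ell}$ yields a functor fixing every $S_i$, which then acts on $\Lambda'=\End^*(S_1\oplus\cdots\oplus S_n)$ by a graded $k$-algebra automorphism fixing vertex idempotents, necessarily of the form $\alpha_i\mapsto c_i\alpha_i$ for some $(c_1,\ldots,c_n)\in(k^{\times})^n$. Rescalings with $c_i=\lambda_{i+1}\lambda_i^{-1}$ are inner (induced by the degree-zero unit $u=\sum_i\lambda_ie_i$, equivalently by rescaling the identities of the $S_i$) and hence represent the identity functor, so the class of our automorphism is determined by $a=\prod_ic_i\in k^{\times}$ and is represented by $\psi_a$. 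Setting $g(F)=(a,\ell,m)$ defines a homomorphism; the direct-product (rather than semidirect) structure follows from an explicit degree-zero unit $u\in\Lambda'$ realizing the inner equivalence between $c\circ\psi_a$ and $\psi_a\circ c$. Surjectivity is immediate from the generators $\Sigma$, $\tau=c_*$ and $\psi_{a,*}$, and injectivity follows as in Lemma~\ref{l:picard-group-spherical}: any $F$ in the kernel fixes each $S_i$ up to isomorphism and acts as the identity on every $\Hom(S_i,\Sigma^pS_j)$, hence is isomorphic to the identity since $\{S_1,\ldots,S_n\}$ generates $\cd^b(\ct_n)$ as a triangulated category. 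I expect the main obstacle to be the extension argument of the previous paragraph, establishing that both $F$ and $F^{-1}$ shift the heart as a whole and that the two shifts cancel, so that $F\circ\Sigma^{-m}$ genuinely restricts to an auto-equivalence of the abelian $\ct_n$.
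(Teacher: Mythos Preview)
Your argument is correct and lands on exactly the same homomorphism $g$ as the paper, with surjectivity witnessed by $\psi_{a,*}\tau^i\Sigma^m$ and injectivity handled by the inner rescaling $\lambda_i\mapsto a_1(F)^{-1}\cdots a_{i-1}(F)^{-1}$, just as the paper does.

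The one genuine difference is how you reach the conclusion $F(S_i)\cong\Sigma^m S_{i+\ell}$. The paper dispatches this in one line, ``by the shape of the Auslander--Reiten quiver'': an auto-equivalence preserves AR-triangles, hence the AR-quiver, hence takes objects at the mouth of a tube (the shifted simples) to objects at the mouth. You instead argue from first principles, using hereditary degree bounds plus cyclic summation to force all $m_i$ equal, then extension-closure of the heart to see that $F\Sigma^{-m}$ restricts to an abelian auto-equivalence of $\ct_n$. Your route is more self-contained and makes the homomorphism property (and the direct-product rather than semidirect structure) explicit, which the paper leaves implicit; the paper's route is shorter but presupposes AR theory. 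One small caveat: your phrasing ``indecomposable exceptional $X_i$'' and the degree argument via $\Hom^*(S_i,S_{i+1})$ tacitly assume $n\geq 2$; for $n=1$ the simple is $1$-spherical rather than exceptional and $\Hom^*(S_1,S_1)$ lives in two degrees, so that step should be replaced by the direct observation that $\End(X_1)=k$ forces $X_1$ simple in $\ct_1$.
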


\begin{proof}
The proof is similar to that for
Lemma~\ref{l:picard-group-spherical}. Let $S_i$ denote the simple
object of $\ct_n$ corresponding to the vertex $i$, $i=1,\ldots,n$.
Let $F$ be an element of $\Aut(\cd^b(\ct_n)$. By the shape of the
Auslander--Reiten quiver of $\cd^b(\ct_n)$, we have $F(S_1)\cong
\Sigma^{n(F)}S_{i(F)}=\Sigma^{n(F)}\tau^{i(F)-1}S_1$ for some
$n(F)\in\mathbb{Z}$ and $i(F)=1,\ldots,n$. Moreover, there are
nonzero elements $a_1(F),\ldots,a_n(F)$ of $k$ such that
$F(\alpha_i)=a_i(F)\tau^{i(F)-1}\Sigma^{n(F)}\alpha_i$. Sending $F$
to $(\prod_{i=1}^n a_i(F),i(F)-1,n(F))$ defines a group homomorphism
$:\Aut(\cd^b(\ct_n))\rightarrow k^{\times}\times
\mathbb{Z}/n\mathbb{Z}\times\mathbb{Z}$. It is surjective because
for any $a\in k^{\times}$, any $i\in\mathbb{Z}/n\mathbb{Z}$ and any
$m\in\mathbb{Z}$ we have $f(\psi_{a,*}\tau^i\Sigma^m)=(a,i,m)$. Let
$F$ be in the kernel of $f$. Then $\prod_{i=1}^n a_i(F)=1$. So the
maps $a_1(F)^{-1}\cdots a_{i-1}(F)^{-1}:F(S_i)=S_i\rightarrow S_i$
defines a natural isomorphism on the generators $S_1,\ldots,S_n$,
and hence $F$ is isomorphic to the identify functor.
\end{proof}

\section{The triangle structure}\label{s:triangle-structure}

Let $k$ be a field and $w$ an integer. Let $\cs_w$ be the algebraic
triangulated category generated by a $w$-spherical object. In this
section, we will prove that for any triangle auto-equivalence $F$ of $\cs_w$ which is not identical on isoclasses of objects, the orbit
category $\cs_w/F$ admits a canonical triangle structure.

\subsection{The triangle structure}
Let
$\Gamma=\Gamma_d$ be the graded algebra $k[t]$ with
$\mathrm{deg}(t)=d=1-w$ when $w\neq 1$ or the ring of power series
$k[[t]]$ when $w=1$, viewed as a dg algebra with trivial
differential. Then $\cd_{fd}(\Gamma)\cong \cs_w$, see Theorem~\ref{t:spherical}. When $w=1$,
$\cd_{fd}(\Gamma)$ is exactly the bounded derived category of a
homogeneous tube and the orbit category $\cs_1/F$
is triangulated by Theorem~\ref{t:keller's-theorem}. So in the rest of this subsection we assume that
$w\neq 1$, \ie $d\neq 0$.

Let $n\in\mathbb{N}$. The following lemma on graded modules
over $\Gamma$ is well-known.

\begin{lemma}\label{l:indec-object}
\begin{itemize}
\item[(a)] Up to degree shifting, an indecomposable graded
$\Gamma$-module is isomorphic to one of the following modules:
$\Gamma/(t^p)~(p\in\mathbb{Z})$ , $\Gamma$, $k[t^{-1}]$ (the graded
dual of $\Gamma$), $M=k[t,t^{-1}]$.
\item[(b)] Let $X$ be a finite-dimensional graded $\Gamma$-module. Then
\[\Hom_{\Grmod(\Gamma)}(M,\bigoplus_{p\in\mathbb{Z}}X\langle np\rangle)=0=\Hom_{\Grmod(\Gamma)}(\bigoplus_{p\in\mathbb{Z}}X\langle np\rangle,M).\]
\end{itemize}
\end{lemma}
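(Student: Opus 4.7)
The plan is to derive both parts from the structure of $\Gamma = k[t]$ as a graded principal ideal domain (note that $d = 1 - w \neq 0$ throughout this subsection).

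For part (a), the classification is the graded analogue of the structure theorem for modules over a discrete valuation ring. Decomposing any graded $\Gamma$-module according to the residue class of degree modulo $d$ reduces the problem to the case $d=1$, in which $\Grmod(\Gamma)$ is equivalent to the category of representations of the doubly-infinite linear quiver $A_\infty^\infty$. The well-known classification of its indecomposable representations by intervals of $\mathbb{Z}$ then yields exactly the four families listed: finite intervals correspond to shifts of $\Gamma/(t^p)$ with $p\geq 1$; intervals unbounded only to the right give shifts of $\Gamma$; intervals unbounded only to the left give shifts of $k[t^{-1}]$; and the full interval gives shifts of $M = k[t,t^{-1}]$. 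I would treat this as a standard fact rather than reprove it in detail.

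For part (b), the plan is to use (a) to decompose $X$ into finite-dimensional indecomposable graded summands, each necessarily of the form $\Gamma/(t^{p})\langle i\rangle$. Hence there exists $\ell \geq 1$ with $t^\ell \cdot X = 0$, and consequently $t^\ell$ annihilates the whole direct sum $Y := \bigoplus_{p\in\mathbb{Z}} X\langle np\rangle$. For the vanishing $\Hom_{\Grmod(\Gamma)}(M,Y)=0$: a graded morphism $\phi \colon M \to Y$ is determined by the sequence $\phi_j := \phi(t^j) \in Y_{jd}$, $j\in\mathbb{Z}$, subject to $\phi_j = t\phi_{j-1}$; iterating $\ell$ times gives $\phi_j = t^\ell \phi_{j-\ell} \in t^\ell Y = 0$ for every $j$, so $\phi=0$. (Conceptually, every element of $M$ is infinitely $t$-divisible, while $Y$ has no nonzero infinitely $t$-divisible elements.) For the vanishing $\Hom_{\Grmod(\Gamma)}(Y,M)=0$: the image of any graded morphism $Y \to M$ is annihilated by $t^\ell$, but $M$ is torsion-free as a $\Gamma$-module, so the morphism must vanish.

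The main obstacle is the precise proof of (a) for non-finitely-generated graded modules, which requires a graded version of the argument showing that divisible modules over a graded PID split off as injective summands. Once (a) is granted, the two vanishings in (b) are formal consequences of finite-dimensional graded $\Gamma$-modules being annihilated by a power of $t$, together with the fact that $M$ is simultaneously $t$-divisible and $t$-torsion-free.
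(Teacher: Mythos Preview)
The paper states this lemma as ``well-known'' and gives no proof at all, so there is nothing to compare against. Your proposal is correct and supplies exactly the kind of argument one would expect: the reduction of (a) to representations of $A_\infty^\infty$ via the residue-class decomposition modulo $d$ is the standard route, and your identification of the four interval types with the four listed module types is accurate (the paper's ``$p\in\mathbb{Z}$'' should presumably read $p\geq 1$). Your argument for (b) is clean and complete: the key observations that $M$ is $t$-divisible and $t$-torsion-free, while any finite-dimensional graded module is $t^\ell$-torsion, immediately give both vanishings. The caveat you flag about (a) in the non-finitely-generated case is fair, but the interval classification for $A_\infty^\infty$ is genuinely standard and the paper clearly intends to invoke it without further justification.
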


Recall that the automorphism group of $\cd_{fd}(\Gamma)$ consists of the functors $\varphi_{a,*}\Sigma^n$, $a\in k^{\times}$, $n\in\mathbb{Z}$. Here by abuse of notation $\varphi_a$ denotes the automorphism $t\mapsto at$ of $\Gamma$. It is useful to observe that $\varphi_{a,*}$ is isomorphic to the identity on objects.

\begin{theorem}\label{t:the-triangle-structure}
The orbit category $\cd_{fd}(\Gamma)/\varphi_{a,*}\Sigma^n$ admits a canonical
triangle structure.
\end{theorem}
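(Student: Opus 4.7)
The plan is to adapt Keller's proof of \cite[Theorem 4]{Keller05}, whose heredity hypothesis on the base abelian category is not directly available here, by verifying the required finiteness via the explicit classification provided by Lemma \ref{l:indec-object}. First we choose a dg enhancement $\cb$ of $\cd_{fd}(\Gamma)$---for instance the full dg subcategory of $\Dif\Gamma$ of cofibrant dg modules with finite-dimensional total cohomology, so that $H^0\cb\simeq\cd_{fd}(\Gamma)$ canonically. Because $\varphi_a:\Gamma\to\Gamma$, $t\mapsto at$, is a genuine automorphism of the dg algebra $\Gamma$ (which has trivial differential), and $\Sigma$ has its standard dg realisation, the functor $F=\varphi_{a,*}\Sigma^n$ lifts canonically to a dg auto-equivalence $\tilde F:\cb\to\cb$. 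We then form the dg orbit category $\cb/\tilde F$ of \cite[\S 5]{Keller05}, whose Hom complexes are defined by a filtered colimit designed so that there is a canonical comparison map
\[
H^*\Hom_{\cb/\tilde F}(X,Y)\longrightarrow\bigoplus_{p\in\Z}\Hom_{\cd_{fd}(\Gamma)}(X,F^pY).
\]

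The crux of the argument is to show this canonical map is an isomorphism, equivalently that for every pair $X,Y$ only finitely many $p\in\Z$ contribute to each cohomological degree of $\Hom_{\cb/\tilde F}(X,Y)$. Since $\varphi_{a^p,*}\cong\id$ on iso-classes, this reduces to the claim that the set $\{m\in\Z\mid\Hom_{\cd_{fd}(\Gamma)}(X,\Sigma^mY)\neq 0\}$ is finite for each $X,Y$. The standard $t$-structure on $\cd_{fd}(\Gamma)$ has heart the category of finite-dimensional graded $\Gamma$-modules, whose indecomposables by Lemma \ref{l:indec-object}(a) are the modules $\Gamma/(t^p)$ up to internal grading shift. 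Since $\Gamma$ is graded hereditary, the two-term free resolution of $\Gamma/(t^p)$ forces $\Ext^m_\Gamma$ between any two heart objects to vanish outside $m\in\{0,1\}$; hence only finitely many cohomological shifts give nonzero Hom for arbitrary objects of $\cd_{fd}(\Gamma)$, which decompose as finite direct sums of shifts of heart objects. As $n\geq 1$, the map $p\mapsto np$ has finite fibres over this finite set, yielding the desired finiteness.

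Once this finiteness is in hand, the argument of \cite[Theorem 4]{Keller05} applies verbatim: the pretriangulated hull of $\cb/\tilde F$ is a pretriangulated dg category whose $H^0$ is canonically equivalent to the naive orbit category $\cd_{fd}(\Gamma)/F$, so the latter inherits a canonical triangle structure, and the projection $\cd_{fd}(\Gamma)\to\cd_{fd}(\Gamma)/F$ is a triangle functor because it is induced by the dg tensor functor $\cb\to\cb/\tilde F$. We expect the sole obstacle to be the finiteness step: Keller's original argument packages it into hypotheses (1)--(2) together with the heredity of $\ch$ in Theorem \ref{t:keller's-theorem}, while here we must produce it directly from the module-theoretic description of $\cd_{fd}(\Gamma)$. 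This is straightforward because $\Gamma$ itself is graded hereditary and its indecomposable finite-length graded modules are fully classified.
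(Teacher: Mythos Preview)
Your argument has a genuine gap: you establish the finiteness condition (which the paper also notes in one line), but you then assert that ``the argument of \cite[Theorem 4]{Keller05} applies verbatim'' to conclude that $H^0$ of the pretriangulated hull of the dg orbit category coincides with the naive orbit category. This is precisely the step that fails to transfer. The finiteness only tells you that $H^0$ of the dg orbit category $\cb/\tilde F$ is the naive orbit category; it says nothing about $H^0$ of its \emph{pretriangulated hull} $\per(\cb/\tilde F)$, which is in general strictly larger. What must be shown is that the image of the orbit category in $\per(\cb/\tilde F)$ is \emph{closed under cones} (extension closed), and this is the entire content of the paper's proof.

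Keller's original extension-closure argument uses hypotheses (1)--(2) of Theorem~\ref{t:keller's-theorem} together with the fact that $\cc=\cd^b(\ch)$ for a hereditary abelian $\ch$: heredity lets one decompose morphisms via Lemma~\ref{l:decomposable-morphism}, and conditions (1)--(2) let one rotate every indecomposable into the heart. Neither ingredient is available here: $\cd_{fd}(\Gamma)$ is \emph{not} the bounded derived category of any hereditary abelian category (it is an orbit category of one, cf.\ Proposition~\ref{p:finite-dimensional-derived-category-as-orbit-category}), and your description of ``the standard $t$-structure with heart $\grmod(\Gamma)$'' is incorrect---for $\deg t=d\neq 0$ the heart of the standard $t$-structure on $\cd_{fd}(\Gamma)$ is just $\mod k$, whose indecomposables do not see the higher-length objects $\Gamma/(t^p)$ at all, so condition (2) fails. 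The paper instead proves extension closure directly: given a morphism $f:X\to Y$ in the orbit category, it pulls the cone $E$ back along $\pi_\rho$ and $i$ to $\cd(\Gamma)$, obtains a triangle whose outer terms are $\Sigma^n$-orbits of finite-dimensional objects, and then uses Lemma~\ref{l:indec-object}(b) together with Lemma~\ref{l:H^*} to rule out the infinite-dimensional indecomposable $k[t,t^{-1}]$ as a summand of the cone. That exclusion is the heart of the matter, and nothing in your outline addresses it.
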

\begin{proof}  Recall that $\Dif\Gamma$ denotes the dg category of dg $\Gamma$-modules. Let $\ca$ be the dg subcategory of $\Dif\Gamma$ consisting of strictly-perfect
dg $\Gamma$-modules with finite-dimensional total cohomology (a dg
$\Gamma$-module is strictly perfect if as a graded module it is the
direct sum of finite copies of shifts of $\Gamma$). Then we
have an equivalence of triangulated categories
\[H^0\ca\cong \cd_{fd}(\Gamma).
\]
The functor $\varphi_{a,*}\Sigma^n$ is a dg auto-equivalence of the
dg category $\ca$. For any $X, Y\in \ca$,
$\Hom_{H^0\ca}(X,(\varphi_{a,*}\Sigma^{n})^pY)$ vanishes for all but
finitely many $p\in \Z$. Let $\cb=\ca/\varphi_{a,*}\Sigma^n$ be the
dg orbit category of $\ca$. We have an equivalence of categories
\[H^0\ca/\Sigma^n\xrightarrow{\sim}H^0\cb.
\]
The canonical dg functor $\pi:\ca\to \cb$ yields an
$\ca$-$\cb$-bimodule
\[(B,A)\to \Hom_{\cb}(B,\pi A),
\]
which induces the standard functors (\confer
Section~\ref{ss:derived-category})
\[\cd\ca\xrightarrow{\pi_*}\cd\cb \ \text{and}\
\cd\cb\xrightarrow{\pi_{\rho}}\cd\ca.
\]
Note that we also have a natural embedding
\[i:\cd\ca\to \cd\Gamma
\]
given by the $\ca$-$\Gamma$-bimodule
\[(\Gamma, A)\to \Hom_{\Dif\Gamma}(\Gamma, A).
\] This
embedding identifies $H^0\ca$ with $\cd_{fd}(\Gamma)$.

Let $\cm=\per\cb$ be the triangulated subcategory of $\cd\cb$ generated by
the representable functors.  By abuse of notation, we denote the
representable functor  $X^{\wedge}$ still by $X$ for any $X\in \cb$.
To prove that $\cd_{fd}(\Gamma)/\varphi_{a,*}\Sigma^n$ admits a canonical
triangle structure, it suffices to show that $H^0\cb$ is extension
closed in $\cm$, \ie for any $X, Y\in H^0\cb$ and $f\in
\Hom_{H^0\cb}(X,Y)$, the third term of the following triangle in
$\cm$ is isomorphic to an object in $H^0\cb$
\[X\xrightarrow{f}Y\to E\to \Sigma X.
\]
We apply the
right adjoint $\pi_{\rho}$ of $\pi_*$ to the triangle above, we get a
triangle in $\cd\ca$
 \[\pi_{\rho}X\to \pi_{\rho}Y\to \pi_{\rho}E\to \Sigma\pi_{\rho}X.
 \]
 Applying the functor $i$, we have a triangle in $\cd\Gamma$
 \[i\pi_{\rho}X\to i\pi_{\rho}Y\to i\pi_{\rho}E\to
 \Sigma i\pi_{\rho}X.
 \]
 It suffices to show that
 \[i\pi_{\rho}E\cong \bigoplus_{p\in \Z}(\varphi_{a,*}\Sigma^{n})^p Z\cong\bigoplus_{p\in\Z}\Sigma^{np} Z, \ \text{for\ some }\
 Z\in \ca.
 \]
 Below we will consider the functor $\varphi_{a,*}\Sigma^n$ only on objects, so we will drop $\varphi_{a,*}$.
 Note that $i\pi_{\rho}X$ and $i\pi_{\rho}Y$ are direct sums of
 $\Sigma^n$-orbits of objects in $\cd_{fd}(\Gamma)$. We have
 \[\dim H^m(i\pi_{\rho}X)<\infty\ \text{and}\ \dim
 H^m(i\pi_{\rho}Y)<\infty, \ \forall m\in \Z.
 \] Let us rewrite  the triangle
 as follows
 \[\bigoplus_{p\in \Z}\Sigma^{np}X\to \bigoplus_{p\in \Z}\Sigma^{np}Y\to N\to \bigoplus_{p\in
 \Z}\Sigma^{np+1}X.
 \]
Applying the cohomological functor $H^*$ to the above triangle gives
a long exact sequence of graded $\Gamma$-modules
\[\bigoplus_{p\in \Z}H^*(X)\langle np\rangle\to \bigoplus_{p\in \Z}H^*(Y)\langle np\rangle\to H^*(N)\to \bigoplus_{p\in
 \Z}H^*(X)\langle np+1\rangle.\]
 Then it follows by Lemma~\ref{l:indec-object} that any degree shifting of $M=H^*(M)$ is not a
direct summand of $H^*(N)$. By Lemma~\ref{l:H^*}, we know that any
shift of $M$ is not a direct summand of $N$. As a consequence, $N$
must be the direct sum of a $\Sigma^n$-orbit. Since $\dim
H^m(N)<\infty$ for each $m\in \Z$, $N$ does only have objects in
$\cd_{fd}(\Gamma)$ as its direct summands. Again, by $\dim
H^m(N)<\infty$ for each $m\in \Z$, we have
\[N\cong (\bigoplus_{p\in\Z}\Sigma^{np}Z_1)\oplus \ldots \oplus (\bigoplus_{p\in\Z}\Sigma^{np}Z_r)
\]
for some indecomposable objects $Z_1, \ldots, Z_r$ in
$\cd_{fd}(\Gamma)$. Namely, $N$ is the direct sum of a $\varphi_{a,*}\Sigma^n$-orbit of objects in $\cd_{fd}(\Gamma)$. This finishes the proof.
\end{proof}

\begin{remark}
Recall that $\Lambda=k[s]/s^2$ is the graded algebra with
$\deg(s)=w$. In~\cite[Section 3]{Keller05} it is shown that
$\cd^b(\Lambda)/\Sigma^2\cong\per(\Gamma)/\Sigma^2$ is not
triangulated for $w=0$ (\ie $d=1$). Similarly,
$\per(\Gamma)/\Sigma^n$ is not triangulated for all $w\in\mathbb{Z}$
and all $n\in\mathbb{N}$. Indeed, we can use the argument
in~\cite[Section 3]{Keller05}. The endomorphism algebra of $\Gamma$
in the orbit category is a polynomial ring $k[u]$ with
$u\in\Hom_{\per(\Gamma)}(\Gamma,\Sigma^{l}\Gamma)$, where $|l|$ is
the least common multiple of $n$ and $|d|$ and $l$ has the same sign
as $d$. The endomorphism $1+u$ is monomorphic but does not admit a
left inverse. By copying the proof of
Theorem~\ref{t:the-triangle-structure}, we can also obtain some
evidence (and some clue about the missing cone of $1+u$). The
morphism $1+u$ induces a triangle in $\cd(\Gamma)$
\[\bigoplus_{p\in\mathbb{Z}}\Sigma^{np}\Gamma\stackrel{f}{\rightarrow}\bigoplus_{p\in\mathbb{Z}}\Sigma^{np}\Gamma\rightarrow N\rightarrow\Sigma\bigoplus_{p\in\mathbb{Z}}\Sigma^{np}\Gamma,\]
where $f$ is the morphism with components
\[\Sigma^{np}\Gamma\stackrel{(1,u)}{\longrightarrow}\Sigma^{np}\Gamma\oplus\Sigma^{np+l}\Gamma\hookrightarrow\bigoplus_{p\in\mathbb{Z}}\Sigma^{np}\Gamma.\]
Mamely, $N$ is the Milnor colimit of the sequence
\[\ldots \longrightarrow \Sigma^p\bigoplus_{i=0}^{m-1}\Sigma^i\Gamma\stackrel{\Sigma^p v}{\longrightarrow}\Sigma^{p+1}\bigoplus_{i=0}^{m-1}\Sigma^i\Gamma\longrightarrow\ldots\]
where $m$ is the greatest common divisor of $n$ and $|d|$, and $v$
is the diagonal matrix $$v=\mathrm{diag}(u,\Sigma
u,\ldots,\Sigma^{m-1}u).$$ Thus $N$ is isomorphic in $\cd(\Gamma)$
to $\bigoplus_{i=0}^{m-1}\Sigma^i k[t,t^{-1}]$, which is not the
direct sum of the $\Sigma^n$-orbit of any object in $\per(\Gamma)$.
\end{remark}

\subsection{The dg algebra for $\cs_w/\varphi_{a,*}\Sigma^n$}
We have a nice byproduct of the proof of Theorem~\ref{t:the-triangle-structure}.

\begin{proposition}\label{p:dg-algebra-for-orbit} Let $w\in\mathbb{Z}\backslash\{1\}$, $a\in k^{\times}$ and $n\in\mathbb{N}$.
 The orbit category $\cs_w/\varphi_{a,*}\Sigma^n$
 is triangle equivalent to $\per\tilde{\Lambda}$,
 where \[\tilde{\Lambda}=\tilde{\Lambda}_{w,a,n}=k\langle s,r,r^{-1}\rangle/(s^2,sr=(-1)^{nw}ars)\]
 is the graded algebra with $\deg(s)=w$ and $\deg(r)=n$, viewed as a dg algebra with trivial differential.
\end{proposition}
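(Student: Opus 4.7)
The strategy is to realize $\cs_w/F$ (with $F:=\varphi_{a,*}\Sigma^n$) as the perfect derived category of the dg endomorphism algebra of $S$ inside an appropriate dg enhancement of the orbit. Choose a dg model $\ca$ of $\cs_w$ in which $\End_\ca^\bullet(S)=\Lambda$ as a dg algebra with zero differential; such a model exists by the formality of $\Lambda$. Let $\cb:=\ca/F$, so that $\cs_w/F\cong H^0\cb$. Since $S$ generates $\cs_w$ as a triangulated category and the projection $\pi:\cs_w\to\cs_w/F$ is an essentially surjective triangle functor, $S$ generates $\cs_w/F$. Because $\cs_w/F=H^0\cb$ is extension-closed in $\per\cb$ (by the argument of Theorem~\ref{t:the-triangle-structure}), idempotent-complete (by Krull--Schmidt), and contains every representable of $\cb$, it coincides with $\per\cb$. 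Keller's Morita theorem for compact generators then yields
\[
\cs_w/F\;\simeq\;\per A,\qquad A:=\End_{\cb}^{\bullet}(S)=\bigoplus_{p\in\mathbb{Z}}\Hom_{\ca}^{\bullet}(S,F^pS),
\]
so it suffices to identify the dg algebra $A$ with $\tilde\Lambda$.

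Using the model $S=\Lambda$, the algebra automorphism $\varphi_{a^p}$ yields a canonical isomorphism $\iota_p:\Sigma^{np}\Lambda\xrightarrow{\sim}F^p\Lambda$, so each summand $\Hom_{\ca}^{\bullet}(S,F^pS)$ is identified with $\Sigma^{np}\Lambda$ and is spanned by a ``unit'' element $e_p$ (the image of $1\in\Lambda$) of $\cb$-degree $-np$ and an ``$s$-like'' element $s_p$ (the image of $s$) of degree $w-np$. Setting $r:=e_{-1}$, a short computation from the orbit composition law gives $r^p=e_{-p}$, whence $\{r^p,\,sr^p\}_{p\in\mathbb{Z}}$ is a basis of $A$ and we obtain a graded vector space isomorphism $A\cong\Lambda\otimes k[r,r^{-1}]$ with $\deg r=n$ and $\deg s=w$. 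The differential on $A$ is zero because it is so on $\Lambda$ and $F$, and $s^2=0$ is inherited from $\Lambda$.

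The crux is the twisted commutation $sr=(-1)^{nw}a\,rs$. By the orbit composition rule $g\circ_{\cb}f=F^{p_f}(g)\circ_{\ca}f$, one has $s\circ_{\cb}r=F^{-1}(s)\circ_{\ca}r$ and $r\circ_{\cb}s=r\circ_{\ca}s$. A concrete module-level computation produces the scalar $a$ from the action of $\varphi_{a^{-1},*}$ on $s\in\Lambda^w$, together with the sign $(-1)^{nw}$ that records the Koszul sign incurred in transporting the degree-$w$ morphism $s$ past the shift $\Sigma^{-n}$ when identifying $\End_{\ca}^{\bullet}(F^{-1}S)$ with $\Lambda$. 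This establishes the desired relation, and hence the dg algebra isomorphism $A\cong\tilde\Lambda$. The chief obstacle is the careful sign and scalar bookkeeping in this last step; once conventions for the shift functor and the push-forward $\varphi_{a,*}$ are pinned down, the remaining verifications are routine.
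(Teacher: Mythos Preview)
Your proposal is correct and follows essentially the same route as the paper: identify $\cs_w/F$ with $\per\cb$ for the dg orbit category $\cb$ (via the extension-closedness established in Theorem~\ref{t:the-triangle-structure}), then restrict to the compact generator $S$ and compute its dg endomorphism algebra in $\cb$ as $\tilde\Lambda$. The only cosmetic difference is that the paper works in the $\Gamma$-model (so $\End_{\ca}^\bullet(S)$ is merely linked to $\Lambda$ by a zigzag of quasi-isomorphisms) and leaves the twisted-commutation check to the remark ``the composition in the orbit category is twisted by $\varphi_{a,*}\Sigma^n$'', whereas you choose a formal model with $\End_{\ca}^\bullet(S)=\Lambda$ on the nose and spell out the Koszul sign and the $\varphi_{a,*}$-scalar explicitly.
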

\begin{proof}
Let $\ca$ and $\cb$ be dg categories as defined in the proof of
Theorem~\ref{t:the-triangle-structure}. Let $S=\Gamma/t\Gamma$ be
the 1-dimensional simple dg $\Gamma$-module concentrated in degree
$0$.
 Recall from Theorem~\ref{t:spherical} that the dg endomorphism algebra
 of (a strictly perfect resolution of) $S$ in $\ca$ is related by a
 zigzag of quasi-isomorphisms to $\Lambda=k[s]/s^2$ with $\deg(s)=w$.
 Thus the dg endomorphism algebra
\[\bigoplus_{p\in\mathbb{Z}}\Hom_{\ca}(\tilde{S},(\varphi_{a,*}\Sigma^{n})^p\tilde{S})
=\bigoplus_{p\in\mathbb{Z}}\Sigma^{pn}\Hom_{\ca}(\tilde{S},\tilde{S})\]
of the image $\tilde{S}$ of $S$ in $\cb$ is related by a zigzag of
quasi-isomorphisms to $\tilde{\Lambda}$ (note that the composition
in the orbit category is twisted by $\varphi_{a,*}\Sigma^n$). It
follows from the construction that every object in $\cb$ is an
iterated cone of closed morphisms of degree 0 between shifts of
copies of $\tilde{S}$, since every object in $\ca$ is an iterated
cone of closed morphisms of degree 0 between shifts of copies of
$S$. Thus the restriction from $\cb$ to the one-object dg
subcategory $\{\tilde{S}\}$ induces a triangle equivalence
$\per\tilde{\Lambda}\cong\per\cb$, \confer for example~\cite{Guo10}.
The latter category, by the proof of
Theorem~\ref{t:the-triangle-structure}, is triangle equivalent to
$H^0\cb=\cd_{fd}(\Gamma)/\varphi_{a,*}\Sigma^n$, and the  desired
result follows.
\end{proof}

\subsection{The Auslander--Reiten quiver}

Let $a\in k^{\times}$, $n\in\mathbb{N}$, $w\in\mathbb{Z}$ and $d=1-w$.

If $w=1$, then the Auslander--Reiten quiver of $\cs_w$ consists of $\Z$ copies of homogenous tubes, and $\Sigma$ acts transitively on them. Thus the Auslander--Reiten quiver of the orbit category $\cs_1/\varphi_{a,*}\Sigma^n$ consists of $n$ homogeneous tubes.

If $w\neq 1$, then the the Auslander--Reiten quiver of $\cs_w$ consists of $|d|$ copies of $\mathbb{Z}A_{\infty}$. An object $M$ and $\Sigma^p M$ are in the same component if and only if $p$ is a multiple of $d$.  Thus the Auslander--Reiten quiver of $\cs_w/\varphi_{a,*}\Sigma^n$ consists of $m$ copies of tubes of rank $n'$, where $m$ is the greatest common divisor of $n$ and $d$ and $n'=\frac{n}{m}$.

\subsection{Orbit categories of the bounded derived category of a standard tube}

Let $n'\in\mathbb{N}$, $c\in\mathbb{Z}/n'\mathbb{Z}$, $b\in k^{\times}$ and $m\in\mathbb{N}$.

\begin{proposition}\label{p:dg-alg-orbit-derived-tube}
The orbit category $\cd^b(\ct_{n'})/\psi_{b,*}\tau^c\Sigma^m$ is triangle equivalent to $\per\tilde\Lambda'$, where
\[\tilde\Lambda'=\tilde\Lambda'_{n',b,c,m}=k\langle s,r,r^{-1}\rangle/(s^2,sr=(-1)^{n'm}b^{n'}rs)\]
is the graded algebra with $\deg(s)=w$ and $\deg(r)=n$, viewed as a dg algebra with trivial differential. Here $n=n'm$, and $w=1-md'$ for $d'$ with $cd'\equiv 1\hspace{-5pt}\pmod{n'}$.
\end{proposition}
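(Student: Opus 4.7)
The plan is to transpose the argument of Proposition~\ref{p:dg-algebra-for-orbit} from $\Gamma$ to $\Lambda'$. By Lemma~\ref{l:dg-alg-derived-tube}, $\cd^b(\ct_{n'})\cong\per(\Lambda')$, so I take $\ca$ to be the dg category of strictly perfect dg $\Lambda'$-modules with finite-dimensional total cohomology, giving $H^0\ca\simeq\cd^b(\ct_{n'})$. Since $\psi_{b,*}$, $\tau^c$ and $\Sigma^m$ pairwise commute (they generate the abelian group of Lemma~\ref{l:picard-group-derived-tube}), $F=\psi_{b,*}\tau^c\Sigma^m$ lifts to a dg auto-equivalence of $\ca$, and I form the dg orbit category $\cb=\ca/F$. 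By Keller's Theorem~\ref{t:keller's-theorem}, $H^0\cb$ acquires a canonical triangle structure, is triangle equivalent to $\cd^b(\ct_{n'})/F$, and embeds as a full subcategory of $\per\cb$.

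The distinguished generator is $\tilde S_1\in\cb$, the image of (a strictly perfect resolution of) the simple at vertex~$1$. Since $\gcd(c,n')=1$, the $F$-orbit of $S_1$ meets every simple $S_i$ exactly once up to shifts by $\Sigma^m$, so every $\tilde S_i$ is a shift of $\tilde S_1$ in $\cb$; combined with the fact that $\{S_1,\ldots,S_{n'}\}$ generates $\cd^b(\ct_{n'})$, this shows $\tilde S_1$ generates $\per\cb$. The endomorphism dga expands as
\[
\End^*_{\cb}(\tilde S_1)=\bigoplus_{p\in\Z}\Hom^*_{\ca}(S_1,F^p S_1)=\bigoplus_{p\in\Z}\Ext^{*+mp}(S_1,S_{1+cp})
\]
with vertex indices reduced mod $n'$. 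The hereditary Ext-structure of $\ct_{n'}$ forces exactly two families of nonzero summands: from $p=n'\ell$ the identity of $S_1$ contributes in total degree $-n\ell$, producing a generator $r$ of degree $n$; from $p=d'+n'\ell$ (using $cd'\equiv 1\pmod{n'}$) the arrow $\alpha_1\in\Ext^1(S_1,S_2)$ contributes in total degree $w-n\ell$, producing a generator $s$ of degree $w=1-md'$. A degree count identifies $\End^*_{\cb}(\tilde S_1)$ with $\tilde\Lambda'$ as graded vector spaces.

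What remains is to verify the multiplicative relations. That $s^2=0$ is automatic: this product reduces to composing two consecutive arrows of $\Lambda'$, giving a length-two path which is killed (equivalently, $\Ext^2$ vanishes in the hereditary $\ct_{n'}$). For the twisted commutation $sr=(-1)^{n'm}b^{n'}rs$, composition in the dg orbit category obeys $(g\circ f)=F^{p_f}(g)\circ f$, and consistency with Proposition~\ref{p:dg-algebra-for-orbit} forces the diagrammatic reading $sr=r\circ s$. The key computation is $F^{-n'}(\alpha_1)$: exploiting commutativity and $\tau^{-cn'}=\mathrm{id}$, one has $F^{-n'}=\psi_{b,*}^{-n'}\Sigma^{-n}$, so
\[
F^{-n'}(\alpha_1)=\psi_{b,*}^{-n'}(\Sigma^{-n}\alpha_1)=(-1)^{n}b^{-n'}\alpha_1.
\]
The scalar $b^{-n'}$ comes from $\psi_{b,*}$ scaling $\alpha_1$ by $b$ (cf.\ Lemma~\ref{l:picard-group-derived-tube}), and the sign $(-1)^n=(-1)^{n'm}$ is the Koszul sign $\Sigma f=(-1)^{|f|}f$ applied to $\alpha_1$ of intrinsic cohomological degree~$1$ in $\ca$: crucially the exponent is $n\cdot 1$ rather than $n\cdot w$, because $\Sigma$ acts on $\ca$-morphisms through their $\ca$-degrees, not through the orbit-dga degree of $s$. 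Combined with the trivial identity $F^{d'}(\mathrm{id}_{S_1})=\mathrm{id}_{S_2}$, this produces $sr=\alpha_1$ and $rs=(-1)^nb^{-n'}\alpha_1$, hence the required relation.

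Finally, once the dg algebra quasi-isomorphism $\End^*_{\cb}(\tilde S_1)\simeq\tilde\Lambda'$ is in hand, Morita theory for dg categories (as invoked in the proof of Proposition~\ref{p:dg-algebra-for-orbit}) lifts it to a triangle equivalence $\per\cb\simeq\per\tilde\Lambda'$, which combined with $H^0\cb\simeq\cd^b(\ct_{n'})/F$ gives the desired result. The principal obstacle is the sign-and-scalar bookkeeping in the twist relation, especially recognising that the Koszul sign uses the cohomological degree~$1$ of $\alpha_1$ in $\ca$ rather than the orbit-dga degree $w$ of the abstract generator~$s$.
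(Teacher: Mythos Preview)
Your approach is essentially the same as the paper's: both set up $\ca$ as the dg category of strictly perfect dg $\Lambda'$-modules, form the dg orbit category $\cb$, invoke Keller's theorem, compute the graded endomorphism algebra of $\tilde S_1$, and conclude by restriction to the one-object subcategory. The paper's proof is extremely terse and simply asserts that the endomorphism dga equals $\tilde\Lambda'$; you fill in substantially more detail, notably the argument that $\tilde S_1$ generates (via $\gcd(c,n')=1$) and the verification of $s^2=0$ and the twist relation.

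One caveat on your sign bookkeeping: the sentence ``$\Sigma$ acts on $\ca$-morphisms through their $\ca$-degrees, not through the orbit-dga degree of $s$'' does not, as stated, draw the distinction you intend. The element $s$ lives in $\Hom^w_\ca(S_1,F^{d'}S_1)$, so its $\ca$-degree \emph{is} $w$, coinciding with its orbit-dga degree; applying the Koszul rule directly to $s$ would give $(-1)^{nw}$. What you are really doing is identifying $s$ with the underlying $\alpha_1\in\Hom^1_\ca(S_1,S_2)$ and tracking the $p$-index separately, which is legitimate but then requires care that no further signs enter when reassembling the composition in $\cb$. Your final answer $(-1)^{n'm}b^{n'}$ is correct, and since the paper itself omits this verification entirely, your treatment is already more complete; just be aware that the justification as written is slightly garbled.
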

\begin{proof} The proof is similar to that for Proposition~\ref{p:dg-algebra-for-orbit}. Let $\ca'$ be the dg category of strictly perfect dg $\Lambda'$-modules, where $\Lambda'$ was defined in Section~\ref{ss:standard-tubes}, and let $\cb'$ be the dg orbit category with respect to the dg automorphism $\psi_{b,*}\tau^c\Sigma^m$. Then $H^0\ca'=\cd^b(\ct_{n'})$ is triangulated and, by~\ref{t:keller's-theorem}, $H^0\cb'=\cd^b(\ct_{n'})/\psi_{b,*}\tau^c\Sigma^m$ is also triangulated.
The dg endomorphism algebra
\[\bigoplus_{p\in\mathbb{Z}}\Hom_{\ca'}(\tilde{S}_1,(\psi_{b,*}\tau^c\Sigma^m)^p\tilde{S}_1)
=\bigoplus_{p\in\mathbb{Z}}\Sigma^{pn}\Hom_{\ca}(\tilde{S}_1,\tilde{S}_{1+c})\]
of the image $\tilde{S}_1$ of $S_1$ in $\cb'$ is  $\tilde{\Lambda}'$. The triangulated orbit category $H^0\cb'=\cd^b(\ct_{n'})/\psi_{b,*}\tau^c\Sigma^m$ by $\tilde{S}_1$. Thus the restriction from $\cb'$ to the one-object dg subcategory $\tilde{S}_1$ induces a triangle equivalence $\per\tilde{\Lambda}'\cong\per\cb'=H^0\cb'=\cd^b(\ct_{n'})/\psi_{b,*}\tau^c\Sigma^m$.
\end{proof}

\subsection{An equivalence}

Let $n\in\mathbb{N}$, $w\in\mathbb{Z}$ and $d=1-w$. Let $m$ be the greatest common divisor of $n$ and $d$, and let $d'=\frac{d}{m}$ and $n'=\frac{n}{m}$. Let $c$ be an inverse of $d'$ modulo $n'$. Let $a,b\in k^{\times}$.

Combining Proposition~\ref{p:dg-algebra-for-orbit} and Proposition~\ref{p:dg-alg-orbit-derived-tube}, we obtain

\begin{theorem}\label{t:the-equivalence}
The two orbit categories $\cs_w/\varphi_{a,*}\Sigma^n$ and
$\cd^b(\ct_{n'})/\psi_{b,*}\tau^c\Sigma^m$ are triangle equivalent
if and only if $a=((-1)^db)^{n'}$.
\end{theorem}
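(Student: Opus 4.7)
The plan is to combine Propositions~\ref{p:dg-algebra-for-orbit} and~\ref{p:dg-alg-orbit-derived-tube}, which identify the two orbit categories with $\per\tilde\Lambda$ and $\per\tilde\Lambda'$ respectively. Both $\tilde\Lambda=\tilde\Lambda_{w,a,n}$ and $\tilde\Lambda'=\tilde\Lambda'_{n',b,c,m}$ are graded algebras (with trivial differential) of the common shape $k\langle s,r,r^{-1}\rangle/(s^2,\,sr-\lambda rs)$ with $\deg s=w$ and $\deg r=n=n'm$. The only data that differs is the commutation scalar: $\lambda_1:=(-1)^{nw}a$ for $\tilde\Lambda$ versus $\lambda_2:=(-1)^{n'm}b^{n'}$ for $\tilde\Lambda'$. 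So the theorem reduces to showing $\per\tilde\Lambda\simeq\per\tilde\Lambda'$ if and only if $a=((-1)^db)^{n'}$.

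First I would translate the claimed condition into the equality $\lambda_1=\lambda_2$. Using $w=1-d$ and $n=n'm$ one has $(-1)^{n'm}=(-1)^n$ and $(-1)^{nw}=(-1)^n(-1)^{nd}$, so $\lambda_1=\lambda_2$ is equivalent to $a=(-1)^{nd}b^{n'}$. Since $nd=m\cdot dn'$, I would observe that $(-1)^{nd}=(-1)^{dn'}$: if $m$ is odd this is immediate, and if $m$ is even then $m\mid d$ forces $d$ to be even, so both exponents are even. Hence $\lambda_1=\lambda_2$ is indeed equivalent to $a=((-1)^db)^{n'}$.

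For the ``if'' direction, when $\lambda_1=\lambda_2$ the identity on generators $s$ and $r$ extends to an isomorphism $\tilde\Lambda\xrightarrow{\sim}\tilde\Lambda'$ of graded algebras, which, composed with the equivalences of Propositions~\ref{p:dg-algebra-for-orbit} and~\ref{p:dg-alg-orbit-derived-tube}, produces the desired triangle equivalence. For the ``only if'' direction I would argue that any triangle equivalence $\per\tilde\Lambda\simeq\per\tilde\Lambda'$ comes from a graded algebra isomorphism: both algebras are basic and connected, each is the endomorphism ring of the canonical compact generator $\tilde S$ from the proof of Proposition~\ref{p:dg-algebra-for-orbit}, and the AR-quiver (consisting of $m$ tubes of rank $n'$, as described in the preceding subsection) rigidifies the generator up to the known auto-equivalences. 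Any such isomorphism $\phi$ is then forced, for degree reasons, to send $s$ to $\alpha s$ and $r$ to $\beta r$ with $\alpha,\beta\in k^\times$, and applying $\phi$ to $sr=\lambda_1 rs$ yields $\lambda_1=\lambda_2$.

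The main obstacle will be the ``only if'' direction, specifically the resonant case $w\in n\mathbb{Z}$, where $s$ is no longer the unique (up to scalar) element of degree $w$: one could a priori try $\phi(s)=\alpha s+\gamma r^{w/n}$. To close this gap I would exploit the relation $s^2=0$: squaring such a mixture produces a nonzero $r^{2w/n}$ summand unless $\gamma=0$ (the cross term $\alpha\gamma(\lambda_1^{w/n}+1)r^{w/n}s$ and the pure $\gamma^2 r^{2w/n}$ contribution sit in linearly independent components of the underlying graded vector space), pinning $\phi(s)$ to a scalar multiple of $s$ and completing the argument.
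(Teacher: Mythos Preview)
Your proof is correct and follows the same approach as the paper: both directions go through Propositions~\ref{p:dg-algebra-for-orbit} and~\ref{p:dg-alg-orbit-derived-tube}, and for the ``only if'' direction both use the shape of the Auslander--Reiten quiver to see that the generator $\tilde S_1$ must map to some $\Sigma^p\tilde S$, so that the graded endomorphism algebras $\tilde\Lambda'$ and $\tilde\Lambda$ are isomorphic. Your treatment of the final implication (including the resonant case $w\in n\mathbb{Z}$) is in fact more explicit than the paper's, which simply asserts it; one small addition is that in the resonant case $\phi(r)$ could also pick up a term $\delta\,sr^{1-w/n}$, but this is harmless since it is annihilated on both sides by $\phi(s)=\alpha s$ and so does not affect the comparison $\phi(s)\phi(r)=\lambda_1\phi(r)\phi(s)$.
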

\begin{proof} Notice first that
Proposition~\ref{p:dg-algebra-for-orbit} is also valid for the case
$w=1$, thanks to Proposition~\ref{p:dg-alg-orbit-derived-tube}.

The `if' part: If  $a=((-1)^db)^{n'}$, then $\tilde\Lambda'$ and
$\tilde\Lambda$ are the same dg algebra, in particular,
$\per(\tilde\Lambda')=\per(\tilde\Lambda)$, implying that
$\cs_w/\varphi_{a,*}\Sigma^n$ and
$\cd^b(\ct_{n'})/\psi_{b,*}\tau^c\Sigma^m$ are triangle equivalent.

The `only if' part: Let $F:\cd^b(\ct_{n'})/\psi_{b,*}\tau^c\Sigma^m\rightarrow\cs_w/\varphi_{a,*}\Sigma^n$ be a triangle equivalence. Then due to the shape of Auslander--Reiten quiver, $F(\tilde{S}_1)\cong\Sigma^p\tilde{S}$ for some integer $p$. Therefore the graded endomorphism algebra $\tilde{\Lambda}'$ of $\tilde{S}_1$ and $\tilde{\Lambda}$ of $\tilde{S}$ are isomorphic, which implies that $a=((-1)^db)^{n'}$.
\end{proof}

In the appendix, we will construct an explicit equivalence for the case $a=b=1$
and $n=2$ using covering and the universal property of orbit
categories.

\begin{example} Let $w=2$ and $n\in\mathbb{N}$. Then
$\cs_2=\cc_{\overrightarrow{A}^{\infty}_{\infty}}$ is known as the
cluster category of
$\overrightarrow{A}^{\infty}_{\infty}$~\cite{KellerReiten08}~\cite{HolmJoergensen09}.
By Theorem~\ref{t:the-equivalence}, when $n$ is even, the orbit
category $\cc_{\overrightarrow{A}^{\infty}_{\infty}}/\Sigma^n$ and
the cluster tube $\cc_{n}=\cd^b(\ct_n)/\tau^{-1}\circ\Sigma$ of rank
$n$ are triangle equivalent, while when $n$ is odd, they are not
triangle equivalent.
\end{example}

\section{Ringel--Hall Lie algebras associated to  spherical
objects}\label{s:Ringel-Hall}

Let $k$ be a finite field with $|k|=q$ and $w$ be an integer. Let
$\cs_w$ be the triangulated category over $k$ generated by a
$w$-spherical object. As shown by
Theorem~\ref{t:the-triangle-structure}, the orbit category
$\cs_w/\Sigma^2$ admits a canonical triangle structure. It is
obviously 2-periodic, so we can associate a Lie algebra to it via the
Ringel--Hall approach in the sense of Peng--Xiao. In this section,
we will determine this Lie algebra.

\subsection{The Ringel--Hall Lie algebra}
We recall the definition of the Ringel--Hall Lie algebra of a
$2$-periodic triangulated category following~\cite{PengXiao2000}.
Let $\mathcal{R}$ be a Hom-finite $k$-linear triangulated category
with suspension functor $\Sigma$. By $\ind \mathcal{R}$ we denote a
set of representatives of the isoclasses of all indecomposable
objects in $\mathcal{R}$.

Given any objects $X,Y,L$ in $\mathcal{R}$, we define
\begin{eqnarray*}W(X,Y;L)&=&\{(f,g,h)\in \Hom_{\mathcal{R}}(X,L)\times \Hom_{\mathcal{R}}(L,Y)\times \Hom_{\mathcal{R}}(Y,\Sigma X)|\\
&&X\xrightarrow{f}L\xrightarrow{g} Y\xrightarrow{h}\Sigma X \text{
is a triangle}\}
\end{eqnarray*}
The action of $\Aut(X)\times \Aut(Y)$ on $W(X,Y;L)$ induces the orbit space
\[V(X,Y;L)=\{(f,g,h)^{\wedge}|(f,g,h)\in W(X,Y;L)\}
\]
where
\[(f,g,h)^{\wedge}=\{(af, gc^{-1}, ch(\Sigma a)^{-1})|(a,c)\in \Aut(X)\times\Aut(Y)\}.
\]
Let $\Hom_{\mathcal{R}}(X,L)_Y$ be the subset of $\Hom_{\mathcal{R}}(X,L)$ consisting of
 morphisms $l:X\to L$ whose mapping cone $Cone(l)$ is isomorphic to $Y$. Consider the action of the
 group $\Aut(X)$ on $\Hom_{\mathcal{R}}(X,L)_Y$ by $d\cdot l=dl$, the orbit is denoted by $l^*$ and the
 orbit space is denoted by $\Hom_{\mathcal{R}}(X,L)_Y^*$.
  Dually one can also consider the subset $\Hom_{\mathcal{R}}(L,Y)_{\Sigma X}$
  of $\Hom_{\mathcal{R}}(L,Y)$ with the group action $\Aut(Y)$ and the orbit
   space $\Hom_{\mathcal{R}}(L,Y)_{\Sigma X}^*$. The following proposition is an observation due to
   ~\cite{XiaoXu2008}.
\begin{proposition}\label{p:hall-num}
$|V(X,Y;L)|=|\Hom_{\mathcal{R}}(X,L)_Y^*|=|\Hom_{\mathcal{R}}(L,Y)_{\Sigma X}^*|$.
\end{proposition}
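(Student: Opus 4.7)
The plan is to produce two natural maps out of $W(X,Y;L)$ and show each descends to a bijection on orbit spaces. Define
\[\pi_1:W(X,Y;L)\to \Hom_{\mathcal{R}}(X,L)_Y,\quad (f,g,h)\mapsto f,\]
and
\[\pi_2:W(X,Y;L)\to \Hom_{\mathcal{R}}(L,Y)_{\Sigma X},\quad (f,g,h)\mapsto g.\]
Both are well-defined because in a triangle $X\xrightarrow{f}L\xrightarrow{g}Y\xrightarrow{h}\Sigma X$ the object $Y$ is a cone of $f$ and $\Sigma X$ is a cone of $g$. Each $\pi_i$ is surjective by the axiom (TR1): any $f$ whose cone is isomorphic to $Y$ sits in a triangle with third term $Y$, and the same for $g$ after rotating to $L\xrightarrow{g}Y\xrightarrow{h}\Sigma X\xrightarrow{-\Sigma f}\Sigma L$.

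The crux is to describe the fibres. The plan is to show that the fibre of $\pi_1$ over a given $f$ is exactly the orbit of any chosen completion under the subgroup $\{1\}\times\Aut(Y)$, and symmetrically that the fibre of $\pi_2$ over $g$ is the orbit under $\Aut(X)\times\{1\}$. The key input is the standard fact that, given two completions $(f,g_1,h_1)$ and $(f,g_2,h_2)$ of a fixed $f$, axiom (TR3) combined with the five-lemma on the long exact $\Hom$-sequences produces a morphism $c:Y\to Y$ satisfying $g_2=g_1c^{-1}$ and $h_2=ch_1$, and this $c$ is automatically an automorphism. These relations are precisely the formula for the action $(1,c)\cdot(f,g,h)=(f,gc^{-1},ch)$, so $\{1\}\times\Aut(Y)$ acts transitively on each fibre of $\pi_1$; the analogous statement for $\pi_2$ follows by the dual argument on the rotated triangle.

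Putting these pieces together, since $\pi_1$ is equivariant for the actions of $\Aut(X)\times\Aut(Y)$ on the source and $\Aut(X)$ on the target (with $\Aut(Y)$ acting trivially on the target), it descends to a map $V(X,Y;L)\to \Hom_{\mathcal{R}}(X,L)_Y^*$, which is surjective by the surjectivity of $\pi_1$ and injective by the fibre description. The dual argument for $\pi_2$ yields a bijection $V(X,Y;L)\to \Hom_{\mathcal{R}}(L,Y)_{\Sigma X}^*$, and combining the two bijections produces the chain of equalities $|V(X,Y;L)|=|\Hom_{\mathcal{R}}(X,L)_Y^*|=|\Hom_{\mathcal{R}}(L,Y)_{\Sigma X}^*|$. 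The main technical obstacle I anticipate is the fibre description: one must match the abstract filler produced by the triangulated axioms to the precise formula $(a,c)\cdot(f,g,h)=(af,gc^{-1},ch(\Sigma a)^{-1})$, which amounts to careful bookkeeping of composition conventions and of the sign hidden in $\Sigma a$, but entails no genuine difficulty beyond that.
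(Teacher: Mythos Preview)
Your argument is correct: the projections $\pi_1,\pi_2$ are surjective by (TR1), the fibre description follows from (TR3) together with the long-exact-sequence five-lemma, and the equivariance is immediate from the formula for the $(\Aut X\times\Aut Y)$-action, so the induced maps on orbit spaces are bijections. The paper does not give its own proof of this proposition; it simply records it as an observation due to Xiao--Xu~\cite{XiaoXu2008}, and your write-up is exactly the standard verification one would find there.
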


We assume further that $\mathcal{R}$ is \emph{$2$-periodic}, \ie
$\mathcal{R}$ is Krull--Schmidt and $\Sigma^2\cong 1$.

Let $\Gr(\mathcal{R})$ be the Grothendieck group of $\mathcal{R}$
and $I_{\mathcal{R}}(-,-)$ be the symmetric Euler form of
$\mathcal{R}$. For an object $M$ of $\mathcal{R}$, we denote by
$[M]$ the isoclass of $M$ and by $h_M=\dimv M$ the canonical image
of $[M]$ in $\Gr(\mathcal{R})$. Let $\mh$ be the subgroup of
$\Gr(\mathcal{R})\otimes_{\Z}\Q$ generated by $\frac{h_M}{d(M)},
M\in \ind \mathcal{R}$, where $d(M)=\dim_k(\End(X)/rad \End(X))$.
One can naturally extend the symmetric Euler form to $\mh\times
\mh$. Let $\mn$ be the free abelian group with basis $\{u_X|X\in
\ind \mathcal{R}\}$. Let
\[\mg(\mathcal{R})=\mh\oplus\mn,
\]
a direct sum of $\Z$-modules. Consider the quotient group
\[\mg(\mathcal{R})_{(q-1)}=\mg(\mathcal{R})/(q-1)\mg(\mathcal{R}).
\]
Let $F_{YX}^L=|V(X,Y;L)|$. Then by Peng and Xiao
~\cite{PengXiao2000} we know that $\mg(\mathcal{R})_{(q-1)}$ is a
Lie algebra over $\Z/(q-1)\Z$, called the \emph{Ringel--Hall Lie
algebra} of $\mathcal{R}$. The Lie operation is defined as follows.
\begin{itemize}
\item[(1)] for any indecomposable objects $X,Y\in \mathcal{R}$,
\[
[u_X,u_Y]=\sum_{L\in \ind
\mathcal{R}}(F_{YX}^L-F_{XY}^L)u_L-\delta_{X,\Sigma
Y}\frac{h_X}{d(X)},
\]
where $\delta_{X,\Sigma Y}=1$ for $X\cong \Sigma Y$ and $0$ else.
\item[(2)] $[\mh, \mh]=0.$
\item[(3)] for any objects $X,Y\in \mathcal{R}$ with $Y$ indecomposable,
\[[h_X,u_Y]=I_{\mathcal{R}}(h_X,h_Y)u_Y,\qquad [u_Y,
h_X]=-[h_X,u_Y].
\]
\end{itemize}

If the triangulated category $R$ is \emph{proper}, \ie any
indecomposable object of $R$ has non-trivial class in the
Grothendieck group, then the sum over $L\in\ind R$ in (1) is
necessarily zero.

\subsection{The Ringel--Hall Lie algebra of $\cs_w/\Sigma^2$: the case $w$ is odd}

Applying Theorem~\ref{t:the-equivalence} to the case $a=1$, $w$ is odd and
$n=2$, we have the following lemma.

\begin{lemma}\label{l:periodic-characterization-w-odd}
The $2$-periodic orbit categories $\cs_w/\Sigma^2$ is triangulated
equivalent to the root category $\cd^b(\ct_1)/\Sigma^2$ of the
standard homogenous tube $\ct_1$.
\end{lemma}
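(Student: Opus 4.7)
The plan is to deduce the lemma as a direct corollary of Theorem~\ref{t:the-equivalence}, applied with the parameters $n=2$ and $a=1$. Indeed, $\cs_w/\Sigma^2$ is by definition $\cs_w/\varphi_{1,*}\Sigma^2$, and the root category of the homogeneous tube $\ct_1$ is by definition $\cd^b(\ct_1)/\Sigma^2 = \cd^b(\ct_1)/\psi_{1,*}\tau^0\Sigma^2$. So the whole argument reduces to matching the numerical invariants $(m,n',c,b)$ appearing in Theorem~\ref{t:the-equivalence} with the values $(2,1,0,1)$ and verifying the compatibility relation $a=((-1)^d b)^{n'}$.

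I would first record the numerical data. Set $d=1-w$; since $w$ is odd, $d$ is even, hence
\[m=\gcd(n,d)=\gcd(2,d)=2,\qquad n'=\frac{n}{m}=1.\]
This already identifies the tube on the right-hand side as $\ct_{n'}=\ct_1$. With $n'=1$, every integer is an inverse of $d'=d/2$ modulo $n'$, so one may take $c=0$, and then $\tau^c$ is the identity auto-equivalence of $\cd^b(\ct_1)$. Finally, the compatibility relation $a=((-1)^d b)^{n'}$ reduces, using $a=1$, $d$ even and $n'=1$, to $1=(-1)^d b = b$, which forces $b=1$ and hence $\psi_{b,*}=\mathrm{id}$. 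Substituting these values into Theorem~\ref{t:the-equivalence} yields the desired triangle equivalence
\[\cs_w/\Sigma^2 \;\simeq\; \cd^b(\ct_1)/\Sigma^2.\]

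The only mildly delicate point is the degenerate case $w=1$, where $d=0$; here one has to interpret $\gcd(2,0)=2$ and $d'=0$ correctly so that the parameters above remain well-defined. Since Theorem~\ref{t:the-equivalence} has already been extended to cover the case $w=1$ via the remark that Proposition~\ref{p:dg-algebra-for-orbit} holds for $w=1$ as well, there is no genuine obstacle here: the lemma is a purely bookkeeping consequence of the general equivalence theorem, and no further categorical argument is needed.
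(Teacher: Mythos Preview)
Your proposal is correct and follows exactly the paper's approach: the paper simply states that the lemma is obtained by applying Theorem~\ref{t:the-equivalence} with $a=1$, $w$ odd and $n=2$, and you have carried out precisely that specialization, spelling out the parameters $m=2$, $n'=1$, $c=0$, $b=1$.
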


We recall that the standard homogeneous tube $\ct_1$ is the category
of finite-dimensional nilpotent representations of the Jordan quiver
(the quiver with one vertex and one loop). For each positive integer
$n$ there is an indecomposable representation $\langle n\rangle$ of
length $n$, and up to isomorphism all indecomposable representations
are of this form. Let $\mathcal{R}=\cd^b(\ct_1)/\Sigma^2$ denote the
root category of $\ct_1$. The image of $\langle n\rangle$ in
$\mathcal{R}$ will still be denoted by $\langle n\rangle$, and its
suspension will be denoted by $\langle -n\rangle$. The Grothendieck
group of $\mathcal{R}$ is free of rank 1 generated by the canonical
image $z$ of $\langle 1\rangle$. To compute the Ringel--Hall Lie
algebra of $\mathcal{R}$, we need the following well-known fact on
the bounded derived category of a hereditary abelian category.

\begin{lemma}\label{l:decomposable-morphism}
Let $\ca$ be a hereditary abelian category and $\cd^b(\ca)$ the bounded derived category. Let $f:X\to Y$ be a morphism of $\ca$, then
\[X\xrightarrow{f}Y\xrightarrow{(\pi,u)}\cok f\oplus \Sigma \ker f \xrightarrow{(v,l)'} \Sigma X\]
is a triangle of $\cd^b(\ca)$.
\end{lemma}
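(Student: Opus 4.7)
The plan is to use the octahedral axiom applied to the epi-mono factorisation of $f$, and then exploit hereditarity to split the resulting triangle. Concretely, write $f = i \circ p$ where $p: X \twoheadrightarrow \im f$ is the canonical epimorphism with kernel $\ker f$ and $i: \im f \hookrightarrow Y$ is the canonical monomorphism with cokernel $\cok f$. The two short exact sequences in $\ca$ give rise to two triangles in $\cd^b(\ca)$:
\[
\ker f \longrightarrow X \xrightarrow{\, p\,} \im f \longrightarrow \Sigma \ker f, \qquad
\im f \xrightarrow{\, i\,} Y \xrightarrow{\,\pi\,} \cok f \longrightarrow \Sigma \im f.
\]

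Next, I would apply the octahedral axiom to the composition $f = i \circ p$. This produces a third triangle relating the cone $C_f$ of $f$ to the cones of $p$ and $i$, namely
\[
\Sigma \ker f \longrightarrow C_f \longrightarrow \cok f \xrightarrow{\,\delta\,} \Sigma^2 \ker f,
\]
together with compatibility squares identifying the first map as being induced (through the triangle for $i$) from the boundary map $\im f \to \Sigma \ker f$ of the $p$-triangle, and the second as $\pi$ on the $Y$-side.

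The key observation is that since $\ca$ is hereditary one has $\Ext^2_\ca(\cok f,\ker f) = 0$, hence
\[
\Hom_{\cd^b(\ca)}(\cok f, \Sigma^2 \ker f) \;=\; \Ext^2_\ca(\cok f,\ker f) \;=\; 0,
\]
so the connecting morphism $\delta$ above vanishes. Therefore the octahedral triangle splits, yielding $C_f \cong \cok f \oplus \Sigma \ker f$ with the two components of the map $Y \to C_f$ being exactly $\pi$ (on the $\cok f$-summand) and the induced map $u: Y \to \Sigma \ker f$ (on the $\Sigma \ker f$-summand, obtained from the lifting of the boundary of the $p$-triangle across $i$). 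Dually, the map $C_f \to \Sigma X$ has components $v: \cok f \to \Sigma X$ (coming from the long exact sequence of the $i$-triangle) and $\Sigma$ applied to $\ker f \hookrightarrow X$.

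The routine parts are the existence of the factorisation and of the two triangles. The only genuinely non-formal step is the vanishing of $\Ext^2$, which is exactly where hereditarity enters; this is what turns what would be a distinguished triangle with a possibly non-trivial gluing datum into an honest split one. The remaining task is purely bookkeeping: tracing through the octahedron to confirm that the four component maps agree with the canonical $\pi, u, v$ and the $\Sigma$-shift of the inclusion $\ker f \hookrightarrow X$ as written in the statement.
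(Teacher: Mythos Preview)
Your argument is correct and is the standard way to establish this fact: factor $f$ through its image, apply the octahedral axiom to obtain a triangle $\Sigma\ker f\to C_f\to \cok f\to \Sigma^2\ker f$, and use hereditarity to force the connecting map into $\Ext^2_{\ca}(\cok f,\ker f)=0$, whence the triangle splits.

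The paper itself does not supply a proof of this lemma; it is stated as a ``well-known fact on the bounded derived category of a hereditary abelian category'' and used as a black box in what follows. So there is nothing to compare against, and your proposal fills the gap exactly as one would expect. One small remark: the specific maps $u$ and $v$ are not pinned down by the splitting alone (a splitting is only unique up to adding a map $\cok f\to \Sigma\ker f$), and the paper does not define them either; for the application in the paper only the isomorphism type of the cone matters, so your closing ``bookkeeping'' remark is adequate, but if you wanted a fully precise statement you would have to fix $u$ and $v$ as part of the data rather than derive them.
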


The following characterization of the Ringel--Hall Lie algebra of
$\mathcal{R}$ is known to the experts, but we failed to find a
precise statement in the literature. It can be easily deduced by
using Lemma~\ref{l:decomposable-morphism} (\confer
~\cite{Pengxiao1998}~\cite{Zhang1997}).

\begin{proposition}~\label{p:w-odd}
The Ringel--Hall Lie algebra $\mg(\mathcal{R})_{q-1}$ of
$\mathcal{R}$ has a $\mathbb{Z}/(q-1)$-basis $\{u_{\langle
n\rangle}|n\in\mathbb{Z}\backslash\{0\}\}\cup\{z\}$ with structure
constants given by
\begin{itemize}
\item[(1)] $[u_{\langle n\rangle}, u_{\langle-n\rangle}]=-nz$, for $n\in \Z\backslash\{0\}$;
\item[(2)] $[u_{\langle n\rangle}, u_{\langle m\rangle}]=0, $ for $n\neq m \in \Z\backslash\{0\}$;
\item[(3)] $[z, u_{\langle n\rangle}]=0$, for $n\in \Z\backslash\{0\}$.
\end{itemize}
\end{proposition}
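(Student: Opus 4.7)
The plan is to compute the bracket directly from the defining formulas in Section~\ref{s:Ringel-Hall}, reducing every Hall-number computation in $\mathcal{R}$ to a computation in the uniserial hereditary category $\ct_1$ by means of Lemma~\ref{l:decomposable-morphism}. First I would describe the indecomposables of $\mathcal{R}$: since the indecomposables of $\cd^b(\ct_1)$ are $\Sigma^i\langle n\rangle$ for $n\in\mathbb{N}$ and $i\in\mathbb{Z}$, and $\Sigma^2\cong\id$ in $\mathcal{R}$, the indecomposables of $\mathcal{R}$ are exactly $\{\langle n\rangle:n\in\mathbb{Z}\setminus\{0\}\}$, with $\langle -n\rangle=\Sigma\langle n\rangle$ for $n\geq 1$. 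One then checks that $G_0(\mathcal{R})=\mathbb{Z}z$, that $h_{\langle n\rangle}=nz$ (using $[\Sigma X]=-[X]$), and that $\End_{\mathcal{R}}(\langle n\rangle)\cong k[T]/(T^{|n|})$ is local, giving $d(\langle n\rangle)=1$ and $\mh=\mathbb{Z}z$.

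Item (3) is then immediate from the vanishing of the symmetric Euler form: since $\ct_1$ is hereditary with $\dim\Hom(V_a,V_b)=\dim\Ext^1(V_a,V_b)=\min(a,b)$, one computes $I_{\mathcal{R}}(z,z)=0$, so $[z,u_{\langle n\rangle}]=I_{\mathcal{R}}(z,h_{\langle n\rangle})u_{\langle n\rangle}=0$.

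For items (1) and (2) I would use Proposition~\ref{p:hall-num} to identify $F_{YX}^L$ with the number of $\Aut(X)$-orbits of morphisms $X\to L$ whose cone is isomorphic to $Y$. Since $\mathcal{R}$ is $2$-periodic and $\ct_1$ is hereditary, $\Hom_{\mathcal{R}}(\langle a\rangle,\langle b\rangle)$ collapses to $\Hom_{\ct_1}(V_{|a|},V_{|b|})$ or $\Ext^1_{\ct_1}(V_{|a|},V_{|b|})$ according as $a,b$ have the same or opposite signs. For item (2), after applying $\Sigma$ if necessary, the triangles with indecomposable middle $L$ come from non-split short exact sequences in $\ct_1$: Lemma~\ref{l:decomposable-morphism} forces the underlying $f$ in $\ct_1$ to be either mono or epi and forces $L$ to be the unique indecomposable of length $|n|+|m|$. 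The uniserial property of $\ct_1$ (each $V_p$ has a unique subobject of each length $\leq p$) then yields $F_{YX}^L=F_{XY}^L$, hence cancellation in the Hall sum; since moreover $\Sigma Y\not\cong X$ whenever $m\neq -n$, the Cartan term is absent and (2) follows. For item (1), we have $\Sigma\langle -n\rangle=\Sigma^2\langle n\rangle=\langle n\rangle$, so $\delta_{X,\Sigma Y}=1$ and the Cartan contribution equals $-h_{\langle n\rangle}/d(\langle n\rangle)=-nz$; the Hall sum vanishes because every morphism $\langle -n\rangle\to\Sigma\langle n\rangle$ in $\mathcal{R}$ is of the form $\Sigma f$ with $f\in\End_{\ct_1}(V_{|n|})$, and Lemma~\ref{l:decomposable-morphism} shows its cone is either $0$ (when $f$ is invertible) or the decomposable object $V_k\oplus\Sigma V_k$ for some $0<k<|n|$, so no indecomposable $L$ contributes.

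The main technical obstacle will be the careful case analysis in item (2), where according to the signs of $n$ and $m$ one must verify that $\Hom_{\mathcal{R}}$ genuinely reduces to the expected $\Hom$- or $\Ext^1$-space in $\ct_1$ and that the $\Aut$-orbit counts on both sides of $F_{YX}^L=F_{XY}^L$ really agree. Once this reduction to the Hall algebra of the uniserial category $\ct_1$ is made systematically, the required Hall numbers are classically known to be $0$ or $1$ in the relevant range, and the proposition follows.
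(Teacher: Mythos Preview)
Your proposal is correct and follows exactly the approach the paper indicates: the paper does not give a detailed proof but simply remarks that the result ``can be easily deduced by using Lemma~\ref{l:decomposable-morphism}'' together with the references~\cite{Pengxiao1998,Zhang1997}, and your plan is precisely to carry out that deduction by reducing every Hall-number computation in $\mathcal{R}$ to the uniserial category $\ct_1$ via Lemma~\ref{l:decomposable-morphism}. Your treatment of the three items (vanishing Euler form for (3), the connecting-morphism analysis for (1), and the cancellation $F_{YX}^L=F_{XY}^L$ from uniseriality for (2)) is sound and more explicit than what the paper records.
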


We have an `integral' version of $\mg(\mathcal{R})_{q-1}$. Let $\Omega$ be the set of isomorphism classes of finite field
extensions of $k$. For any $E\in \Omega$,  let $\langle
n\rangle^E=\langle n\rangle\otimes_k E$ be the extension of $\langle
n\rangle$ over field $E$ and $\langle-n\rangle^E$ be the suspension
of $\langle n\rangle^E$. One can define the Ringel--Hall Lie algebra
$\mg(\mathcal{R})_{(|E|-1)}$ similarly. Consider the product of Lie
algebras
\[\mathcal{L}:=\prod_{E\in \Omega}\mg(\mathcal{R})_{|E|-1}.
\]
Let $U_{\langle\pm n\rangle}=(\ldots, u_{\langle\pm
n\rangle^{E}},\ldots)_E$ ($n\in\mathbb{N}$), $Z=(\ldots,
z^E,\ldots)_E$. Consider the Lie subalgebra $\mg$ of $\mathcal{L}$
generated by $U_{\langle\pm n\rangle}$ and $Z$, we  also call $\mg$
the Ringel--Hall Lie algebra of $\mathcal{R}$. It is easy to see
that $\mg$ is isomorphic to the infinite-dimensional Heisenberg Lie
algebra. Indeed, $\{-\frac{1}{n}U_{\langle
n\rangle}|n\in\mathbb{N}\}\cup\{Z\}\cup\{U_{\langle-n\rangle}|n\in\mathbb{N}\}$
is a Chevalley basis.

Combining Lemma~\ref{l:periodic-characterization-w-odd} and
Proposition~\ref{p:w-odd}, we have
\begin{proposition}
The Ringel--Hall algebra of $\cs_w/\Sigma^2$ for $w$ odd is
isomorphic to the infinite-dimensional Heisenberg Lie algebra.
\end{proposition}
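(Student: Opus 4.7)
The plan is to obtain the proposition as a direct consequence of Lemma~\ref{l:periodic-characterization-w-odd} combined with Proposition~\ref{p:w-odd}. The first step is to use the lemma to replace $\cs_w/\Sigma^2$ by the root category $\mathcal{R}=\cd^b(\ct_1)/\Sigma^2$ of the standard homogeneous tube. Since the Ringel--Hall Lie algebra of a $2$-periodic triangulated category is defined purely in terms of Hom-spaces, automorphism groups, distinguished triangles, and the symmetric Euler form on the Grothendieck group --- all of which are transported (up to canonical isomorphism) by any $k$-linear triangle equivalence --- this replacement induces an isomorphism on the associated Ringel--Hall Lie algebras, and likewise on their integral versions obtained by taking products over finite field extensions $E\in\Omega$ and extracting the subalgebra generated by the $U_{\langle\pm n\rangle}$ and $Z$.

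The second step is to invoke Proposition~\ref{p:w-odd}, which explicitly gives the bracket relations in $\mg$ on the generators $U_{\langle n\rangle}$, $U_{\langle-n\rangle}$ ($n\in\mathbb{N}$) and $Z$. The third step is to exhibit the desired isomorphism with the infinite-dimensional Heisenberg Lie algebra $\mathfrak{h}_\infty$, which by definition has generators $\{p_n, q_n : n\in\mathbb{N}\}$ and a central element $c$ subject to $[p_n,q_m]=\delta_{nm}c$ and $[p_n,p_m]=[q_n,q_m]=0$. Setting $p_n=-\tfrac{1}{n}U_{\langle n\rangle}$, $q_n=U_{\langle-n\rangle}$, and $c=Z$, one reads off $[p_n,q_n]=c$ from relation (1), $[p_n,q_m]=0$ for $n\neq m$ also from (1), $[p_n,p_m]=[q_n,q_m]=0$ from (2), and centrality of $c$ from (3). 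This is precisely the presentation of $\mathfrak{h}_\infty$, so the assignment extends to the claimed isomorphism, and $\{-\tfrac{1}{n}U_{\langle n\rangle}\}\cup\{Z\}\cup\{U_{\langle-n\rangle}\}$ is the corresponding Chevalley basis as asserted in the discussion preceding the proposition.

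The main obstacle is really in the first step: one must be certain that the triangle equivalence of Lemma~\ref{l:periodic-characterization-w-odd}, which was established through a dg-algebraic identification of endomorphism algebras, preserves the precise numerical ingredients entering the Ringel--Hall structure. Concretely this amounts to checking that equivalent objects have matching automorphism groups, matching $\Hom$-cardinalities, matching mapping-cone sets, and the same residue degrees $d(X)=\dim_k\End(X)/\mathrm{rad}\,\End(X)$; but this is automatic for any additive $k$-linear triangle equivalence, so the verification is a formality rather than a genuine difficulty. Once granted, the remaining steps are pure bookkeeping.
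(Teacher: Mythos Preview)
Your proposal is correct and follows essentially the same approach as the paper: the proposition is stated there as an immediate consequence of Lemma~\ref{l:periodic-characterization-w-odd} and Proposition~\ref{p:w-odd}, together with the preceding paragraph identifying the integral Ringel--Hall Lie algebra $\mg$ with the infinite-dimensional Heisenberg Lie algebra via the Chevalley basis $\{-\tfrac{1}{n}U_{\langle n\rangle}\}\cup\{Z\}\cup\{U_{\langle-n\rangle}\}$. Your more explicit justification that a $k$-linear triangle equivalence transports the numerical data entering the Ringel--Hall construction is a welcome expansion of what the paper leaves implicit.
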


\subsection{The Ringel--Hall Lie algebra of $\cs_w/\Sigma^2$: the case $w$ is even}

Applying Theorem~\ref{t:the-equivalence} to the case $a=1$, $w$ is even
and $n=2$, we have the following lemma.

\begin{lemma}\label{l:periodic-characterization-w-even}
The $2$-periodic orbit categories $\cs_w/\Sigma^2$ is triangulated
equivalent to the cluster tube of rank $2$.
\end{lemma}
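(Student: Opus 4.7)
The plan is to reduce the lemma to a direct application of Theorem~\ref{t:the-equivalence} by specializing its parameters to the present situation. I would take $a=1$, $n=2$, and $w$ even, and then unwind the combinatorial data $m$, $n'$, $c$, $b$ that appear in the statement of that theorem.

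Since $w$ is even, $d=1-w$ is odd, so $m=\gcd(n,d)=\gcd(2,d)=1$, which forces $n'=n/m=2$ and $d'=d$. An inverse of $d'$ modulo $n'=2$ is $c\equiv 1\pmod{2}$, so in $\mathbb{Z}/2\mathbb{Z}$ we may equally well represent this class by $c=-1$. On the other side, I would choose $b=1$, and then verify the compatibility condition of Theorem~\ref{t:the-equivalence}: $((-1)^d b)^{n'}=((-1)^{d})^{2}=1=a$, which holds automatically because $n'=2$ kills the sign.

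With these parameters in hand, Theorem~\ref{t:the-equivalence} provides a triangle equivalence
\[
\cs_w/\Sigma^2 \;=\; \cs_w/\varphi_{1,*}\Sigma^{2}\;\stackrel{\sim}{\longrightarrow}\; \cd^b(\ct_{2})/\psi_{1,*}\tau^{-1}\Sigma\;=\;\cd^b(\ct_{2})/\tau^{-1}\Sigma.
\]
The right-hand side is, by definition (see the example following Theorem~\ref{t:the-equivalence}), the cluster tube of rank $2$, so this gives the desired equivalence and completes the proof.

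I do not anticipate any real obstacle: the whole content has been packaged into Theorem~\ref{t:the-equivalence}, and the only thing to check is that the parity hypothesis on $w$ makes the greatest common divisor trivial, which in turn makes the sign condition on $a$ and $b$ vacuous. The only point where one has to be slightly careful is the identification of the representative $c=-1$ of the residue class of inverses of $d'$ modulo $2$; this is needed so that $\tau^{c}\Sigma^{m}=\tau^{-1}\Sigma$ matches the defining automorphism of the cluster tube of rank $2$ on the nose.
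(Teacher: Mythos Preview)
Your proposal is correct and is exactly the paper's approach: the paper's proof consists of the single sentence that the lemma follows from Theorem~\ref{t:the-equivalence} with $a=1$, $w$ even and $n=2$, and you have simply (and correctly) unpacked the parameters $m=1$, $n'=2$, $c\equiv -1$, $b=1$ that make this specialization work.
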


Let us first recall the definition of the cluster tube of rank $2$.
Let $\Delta$ be the cyclic quiver with $2$ vertices. Let $\ct_2$ be
the category of finitely generated nilpotent right
$k\Delta$-modules. Let $\cd=\cd^b(\ct_2)$ be the bounded derived
category of $\ct_2$, $\tau$ be the AR-translation functor, and
$\Sigma$ be the suspension functor of $\cd^b(\ct_2)$. The
\emph{cluster tube of rank $2$}, denoted by $\cc$, is defined as the
orbit category $\cd^b(\ct_2)/\tau^{-1}\circ\Sigma$ (\confer
\cite{BarotKussinLenzing08}\cite{BuanMarshVatne10}). In particular,
for objects $X$ and $Y$ of $\cc$, the morphism space $\cc(X,Y)$ is
\[\cc(X,Y)=\cd(X,Y)\oplus \cd(X,\tau^{-1}\Sigma Y).
\]
The composite functor $\ct_2\rightarrow\cd\rightarrow\cc$ is
bijective on isoclasses of objects and preserves indecomposability.
Thus we have $\ind \cc=\ind \ct_2=\{\langle n\rangle,\langle
-n\rangle|n\in \N\}$, where $\langle n\rangle$ is the unique
indecomposable $k\Delta$-module of length $n$ with socle the simple
module corresponding to the vertex $1$, and $\langle -n\rangle$ the
unique indecomposable $k\Delta$-module of length $n$ with socle the
simple module corresponding to the vertex $2$. We have $\tau \langle
n\rangle=\langle -n\rangle$ in $\ct_2$ (here $\tau^2=1$), and
$\Sigma\langle n\rangle\cong\tau\langle n\rangle =\langle -n\rangle$
in $\cc$ (here $\tau\cong\Sigma$ and $\tau^2\cong \Sigma^2=1$). The
Grothendieck group of $\cc$ is free of rank 1 generated by the
canonical image of $\langle 1\rangle$,
see~\cite{BarotKussinLenzing08}. It is easy to see that the images
of $\langle \pm 2n\rangle$ in the Grothendieck group are 0, so the
triangulated category $\cc$ is not proper.

To compute the Ringel--Hall Lie algebra of $\cc$, we need some
auxiliary results. Let $X$, $Y$ be two objects of $\ct_2$, viewed as
objects of $\cc$, and $f\in \cd(X,Y)$ and $g\in \cd(X,\tau^{-1}Y)$.
Let $Z$ be an object of $\ct_2$ such that $\Sigma Z$ is a cone of
$f+g$. As discussed in~\cite[Section 4.6]{Yang10a}, we can compute
$Z$ using the long exact sequence
\[\xymatrix{
\tau^{-1}X\ar[r]^{\tau^{-1}f} & \tau^{-1}Y \ar[r]& Z \ar[r] &
X\ar[r]^{f} & Y ,}\] where the short exact sequence
\[\xymatrix{0\ar[r] & \cok(\tau^{-1}f)\ar[r] & Z \ar[r] & \ker(f)\ar[r] &
0}\] is induced from $g$ by the inclusion $\ker(f)\hookrightarrow X$
and the quotient $\tau^{-1}Y\twoheadrightarrow \cok(\tau^{-1}f)$.
Namely, we have the following commutative diagram
\[\xymatrix{
g: & 0\ar[r] & \tau^{-1}Y\ar[r]\ar@{->>}[d] & E\ar[r]\ar[d] & X\ar[r]\ar@{=}[d] & 0\\
& 0\ar[r] & \cok(\tau^{-1}f)\ar[r] & E'\ar[r] & X\ar[r] & 0\\
& 0\ar[r] & \cok(\tau^{-1}f)\ar[r]\ar@{=}[u] & Z\ar[u] \ar[r] &
\ker(f)\ar[r]\ar@{^{(}->}[u] & 0~~,}\] where the square in the
left-upper corner is a pushout and the square in the right-lower
corner is a pullback. The next proposition follows easily.

\begin{proposition}\label{p:indecomposable}
Let $f\in \cd(X,Y), g\in \cd(X,\tau^{-1}\Sigma Y)$. For any nonzero
$t\in k$, we have $Con(f+tg)\cong Con(f+g)$. In particular,
$Con(f+g)$ is indecomposable if and only if $Con(f+tg)$ is
indecomposable.
\end{proposition}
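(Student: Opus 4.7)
The plan is to exploit the explicit description of the cone which was worked out in the paragraph preceding the proposition. Recall from there that if $\Sigma Z$ is a cone of $f+g$, then $Z$ sits in the short exact sequence
\[0 \longrightarrow \cok(\tau^{-1}f) \longrightarrow Z \longrightarrow \ker(f) \longrightarrow 0\]
obtained by pushing out along $\tau^{-1}Y \twoheadrightarrow \cok(\tau^{-1}f)$ and pulling back along $\ker(f) \hookrightarrow X$ the short exact sequence corresponding to the class
\[g \in \cd(X,\tau^{-1}\Sigma Y) = \Ext^1_{\ct_2}(X,\tau^{-1}Y).\]
Thus producing the cone factors through a functorial operation on extension classes in $\Ext^1_{\ct_2}(\ker(f),\cok(\tau^{-1}f))$ which sends $g$ to the class $\bar g$ of the bottom sequence above.

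The key observation is that this passage $g\mapsto \bar g$ is $k$-linear, so replacing $g$ by $tg$ with $t\in k^\times$ replaces the class $\bar g$ by $t\bar g$. Now for any nonzero scalar $t$ and any extension class $\xi \in \Ext^1_{\ct_2}(A,B)$, multiplication by $t$ on $B$ is an automorphism of $B$ that transforms the extension with class $\xi$ into the one with class $t\xi$; in particular the two middle terms are isomorphic. Applied to $A=\ker(f)$, $B=\cok(\tau^{-1}f)$ and $\xi=\bar g$, this gives $Z(g)\cong Z(tg)$, whence
\[\mathrm{Con}(f+tg) \;\cong\; \Sigma Z(tg) \;\cong\; \Sigma Z(g) \;\cong\; \mathrm{Con}(f+g).\]
The final sentence of the proposition is then a free consequence of the Krull--Schmidt property of $\cc$, since isomorphic objects have the same indecomposable decomposition.

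I do not expect any serious obstacle: the whole argument is a straightforward linearity/functoriality observation combined with the construction already carried out just above the proposition. The only thing worth being a bit careful about is verifying that the assignment $g\mapsto \bar g$ really is $k$-linear (i.e.\ that both the pushout along a fixed epimorphism and the pullback along a fixed monomorphism act $k$-linearly on extension classes), but this is standard and needs no real computation.
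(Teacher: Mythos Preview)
Your argument is correct and is precisely the intended one: the paper does not write out a proof but simply says ``The next proposition follows easily'' after the pushout--pullback description of the cone, and you have filled in exactly those easy details (linearity of pushout/pullback on $\Ext^1$, and the fact that scaling an extension class by a unit does not change the isomorphism type of the middle term).
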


For $f\in \cd(X,Y)$ and $g\in \cd(X,\tau^{-1}\Sigma Y)$, let
$(f+g)^*$ be the orbit of $f+g$ by the action of $\Aut_{\cc}(X)$ and
$(f+g)_*$ be the orbit of $f+g$ by the action of $\Aut_{\cc}(Y)$. We
have the following easy observations
\begin{itemize}
\item[$\cdot$] for any $g\in \cd(X, \tau^{-1}\Sigma Y)$ and any $h\in \cd(Y, \tau^{-1}\Sigma
Z)$, we have $h\circ g=0$ in $\cc$;
\item[$\cdot$] for $g\in\cd(X,\tau^{-1}\Sigma Y)$, the orbit $g^*$ is
contained in $\cd(X,\tau^{-1}\Sigma Y)$.
\end{itemize}

\begin{lemma}\label{l:inj-surj}
Let $X,Y\in\ind \cc$, $f\in \cd(X,Y)$, $g\in \cd(X, \tau^{-1}\Sigma
Y)$. Assume that $(f+g)^*=h^*$ for some $h\in \cd(X,Y)$, then the
mapping cone $Con(f+g)$ of $f+g$ in $\cc$ is indecomposable if and
only if $f$ is injective or surjective.
\end{lemma}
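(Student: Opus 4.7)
The plan is to decode the orbit hypothesis $(f+g)^{*}=h^{*}$ as a factorisation property of $g$, plug this into the pullback/pushout description of the cone given just above Proposition~\ref{p:indecomposable}, and then read off indecomposability of the cone from the sub/quotient behaviour in the tube $\ct_{2}$.

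First I would unpack the hypothesis. Write any $a\in\Aut_{\cc}(X)$ as $a=a_{0}+a_{1}$ with $a_{0}\in\cd(X,X)$ and $a_{1}\in\cd(X,\tau^{-1}\Sigma X)$; a direct check shows that $a_{0}$ must be invertible in $\cd$ (the $\cd(X,X)$-component of $a\circ a^{-1}$ must equal $\mathrm{id}_{X}$). Using the composition rule of the orbit category together with the hereditarity of $\ct_{2}$ (which kills all $\cd(X,(\tau^{-1}\Sigma)^{2}Y)$-contributions), one computes in the decomposition $\cc(X,Y)=\cd(X,Y)\oplus\cd(X,\tau^{-1}\Sigma Y)$ that
\[
(f+g)\circ a \;=\; fa_{0}\;+\;\bigl(ga_{0}+(\tau^{-1}\Sigma f)\,a_{1}\bigr).
\]
For some $a\in\Aut_{\cc}(X)$ this must coincide with an element of $\cd(X,Y)$, so the second summand vanishes. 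Since $a_{0}$ is invertible, $g=(\tau^{-1}\Sigma f)\gamma$ for $\gamma=-a_{1}a_{0}^{-1}\in\cd(X,\tau^{-1}\Sigma X)=\Ext^{1}_{\ct_{2}}(X,\tau^{-1}X)$; equivalently, $g$ is the pushout of the extension $\gamma$ along $\tau^{-1}f$.

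Next, apply the description $\mathrm{Con}(f+g)=\Sigma Z$ recalled before Proposition~\ref{p:indecomposable}, where $Z$ sits in a short exact sequence
\[
0\to\cok(\tau^{-1}f)\to Z\to\ker(f)\to 0
\]
obtained from the extension $0\to\tau^{-1}Y\to E\to X\to 0$ representing $g$ by pulling back along $\ker(f)\hookrightarrow X$ and pushing out along $p:\tau^{-1}Y\twoheadrightarrow\cok(\tau^{-1}f)$. Because $g=(\tau^{-1}f)_{*}\gamma$, one gets $p_{*}g=(p\circ\tau^{-1}f)_{*}\gamma=0$, so the class in $\Ext^{1}(\ker(f),\cok(\tau^{-1}f))$ of the displayed sequence is zero and
\[
Z\cong\ker(f)\oplus\cok(\tau^{-1}f).
\]

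To conclude, I would use the standard fact that any subobject and any quotient of an indecomposable of the tube $\ct_{2}$ is again indecomposable (or zero). If $f$ is injective then $\ker(f)=0$ and $Z\cong\cok(\tau^{-1}f)$ is an indecomposable quotient of $\tau^{-1}Y$; if $f$ is surjective then $\cok(\tau^{-1}f)=0$ and $Z\cong\ker(f)$ is an indecomposable subobject of $X$; and if $f$ is neither injective nor surjective, both summands of the splitting are nonzero and $Z$, hence $\mathrm{Con}(f+g)=\Sigma Z$, is properly decomposable. The main obstacle I anticipate is the first step---converting the orbit-theoretic hypothesis into the single clean identity $g=(\tau^{-1}\Sigma f)\gamma$ by correctly tracking the bidegrees in $\cc(X,Y)$; once that factorisation is in hand, the vanishing of the pushout and the behaviour of indecomposables in $\ct_{2}$ make the rest of the argument essentially formal.
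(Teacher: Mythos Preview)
Your proof is correct, but it takes a genuinely different route from the paper's argument. The paper proceeds more economically: since $(f+g)^{*}=h^{*}$, the cones $\mathrm{Con}(f+g)$ and $\mathrm{Con}(h)$ are isomorphic in $\cc$; because $h$ lies in $\cd(X,Y)$, its cone (computed in $\cd$ via Lemma~\ref{l:decomposable-morphism} and pushed to $\cc$) is $\cok(h)\oplus\tau\ker(h)$, which is indecomposable exactly when $h$ is injective or surjective; finally, writing the automorphism realising the orbit equality as $\phi+x$ with $\phi\in\Aut_{\cd}(X)$ gives $f\phi=h$, so $f$ is injective/surjective if and only if $h$ is. In other words, the paper never computes $\mathrm{Con}(f+g)$ directly---it reduces immediately to the pure-$\cd$ morphism $h$.

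You instead decode the orbit hypothesis as the factorisation $g=(\tau^{-1}\Sigma f)\gamma$, feed this into the pullback/pushout description of the cone preceding Proposition~\ref{p:indecomposable}, and obtain the explicit splitting $Z\cong\ker(f)\oplus\cok(\tau^{-1}f)$ from the vanishing of the pushed-out extension class. Both arguments rely on the same underlying facts (hereditariness of $\ct_{2}$, uniseriality of its indecomposables, and the $\cd$-component of an automorphism in $\cc$ being invertible), but the paper's route is shorter because it sidesteps the extension calculus entirely. Your version has the minor advantage of producing the cone explicitly as an object of $\ct_{2}$ without passing through $h$, and it makes transparent \emph{why} the hypothesis $(f+g)^{*}=h^{*}$ forces the extension defining $Z$ to split.
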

\begin{proof}
Suppose $Con(f+g)$ is indecomposable, then by the condition
$(f+g)^*=h^*$, we know that $Con(h)$ is indecomposable. Thus, $h$ is
injective or surjective. Since $(f+g)^*=h^*$, there exists
$\phi_X\in \Aut_{\cc}(X)$ such that $(f+g)\phi_X=h$. We can write
$\phi_X$ as $\phi+x$, where $\phi\in \Aut_{\cd}(X)$ and $x\in \cd(X,
\tau^{-1}\Sigma X)$. In particular, we have $f\circ \phi=h$ and $f$
is injective or surjective.

Suppose $f$ is injective (resp. surjective). The assumption
$(f+g)^*=h^*$  implies that $h$ is injective (resp. surjective).
Therefore the mapping cone of $f+g$ is indecomposable.
\end{proof}
\begin{remark}
For any $f\in \cd(X,Y), g\in \cd(X, \tau^{-1}\Sigma Y)$, if $f$ is
surjective, then $(f+g)^*=f^*$ ; if $f$ is injective, then
$(f+g)_*=f_*$.
\end{remark}

\begin{lemma}\label{l:noniso}
Let $X,Y\in \ind \cc$ and  $f\in \cd(X,Y), g\in
\cd(X,\tau^{-1}\Sigma Y)$. Suppose $(f+g)^*\neq h^*$ for any $h\in
\cd(X,Y)$. Then $(f+sg)^*\neq (f+tg)^*$ for any nonzero $s\neq t\in
k$.
\end{lemma}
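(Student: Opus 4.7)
The plan is to argue the contrapositive: assume $(f+sg)^*=(f+tg)^*$ for some nonzero $s\neq t$ in $k$, and produce an $h\in\cd(X,Y)$ with $(f+g)^*=h^*$. By definition there is $\phi_X\in\Aut_\cc(X)$ with $(f+sg)\circ\phi_X=f+tg$. Write $\phi_X=\phi+x$ with $\phi\in\End_\cd(X)$ and $x\in\cd(X,FX)$, where $F=\tau^{-1}\Sigma$; invertibility of $\phi_X$ in $\cc$ forces $\phi\in\Aut_\cd(X)$. I would unpack the composition using that $\ct_2$ is hereditary and that $F^2$ restricts to $\Sigma^2$ on $\ct_2$ (since $\tau^2=\id$ there), so that $\cd(X,F^2Y)=\Ext^2_{\ct_2}(X,Y)=0$; only the degree-zero and degree-one components survive, yielding
\[
f\phi=f\qquad\text{and}\qquad sg\phi+F(f)\,x=tg.
\]

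Next I would exploit the locality of $\End_\cd(X)$: since $X$ is indecomposable in the tube $\ct_2$ over $k$, $\End_\cd(X)$ is a finite-dimensional local $k$-algebra with residue field $k$, so $\phi=c\cdot\id+\mu$ with $c\in k^\times$ and $\mu\in\opname{rad}\End_\cd(X)$. Expanding $f(\phi-\id)=0$ gives $(c-1)f=-f\mu$; iterating and using $\mu^N=0$ for $N\gg 0$ yields $(c-1)^Nf=0$. In the non-degenerate case $f\neq 0$ this forces $c=1$, so $\phi=\id+\mu$. The candidate automorphism is $\phi':=s\phi-t=(s-t)\id+s\mu$; its residue modulo $\opname{rad}\End_\cd(X)$ is $s-t\neq 0$, so $\phi'\in\Aut_\cd(X)$ and $\phi_X':=\phi'+x\in\Aut_\cc(X)$. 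The second displayed equation above rearranges as $g\phi'+F(f)\,x=0$, and the orbit-category composition gives
\[
(f+g)\circ\phi_X'\;=\;f\phi'+\bigl(F(f)\,x+g\phi'\bigr)\;=\;(s-t)f\,\in\,\cd(X,Y),
\]
so $(f+g)^*=\bigl((s-t)f\bigr)^*$, contradicting the hypothesis with $h=(s-t)f$.

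The main obstacle is the step $f(\phi-\id)=0\Rightarrow\phi-\id\in\opname{rad}\End_\cd(X)$; the argument uses $f\neq 0$ essentially, since otherwise one cannot rule out a $\phi$ with arbitrary residue $c\in k^\times$. Once $\phi=\id+\mu$ with $\mu$ nilpotent is secured, the rest is a direct computation from the composition law of the orbit category. The boundary case $f=0$ must be handled separately: there $(f+g)^*=g^*$ lies entirely in the degree-one strand of $\cc(X,Y)$, and a dedicated argument (exploiting the hypothesis to constrain $g$) is needed; this is the step where one has to be most careful.
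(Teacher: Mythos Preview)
Your argument is essentially the paper's: contrapositive, decompose $\phi_X=\phi+x$ into its $\cd$-component and its $F=\tau^{-1}\Sigma$-component, expand the orbit-category composition degree by degree to obtain $f\phi=f$ and $F(f)x+sg\phi=tg$, use locality of $\End_\cd(X)$ to force the scalar part of $\phi$ to be $1$, and then exhibit an automorphism carrying $f+g$ into $\cd(X,Y)$. The only cosmetic difference is in this last step: the paper inverts $s-t+sb$ and checks $(f+g)\bigl(1-c(s-t+sb)^{-1}\bigr)=f$, whereas you take $\phi_X'=(s\phi-t)+x$ and compute $(f+g)\phi_X'=(s-t)f$; since $((s-t)f)^*=f^*$, these give the same contradiction.

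You are right to flag that the step $f(\phi-\id)=0\Rightarrow c=1$ requires $f\neq 0$; the paper's assertion ``one has $a=1$'' is equally silent on this point. However, your suggestion that the case $f=0$ can be rescued by a dedicated argument is mistaken: if $f=0$ and $g\neq 0$, the hypothesis is satisfied (the orbit $g^*$ lies in $\cd(X,\tau^{-1}\Sigma Y)$ and is nonzero, so it cannot coincide with $h^*$ for any $h\in\cd(X,Y)$), yet $(sg)^*=(tg)^*$ for all nonzero $s,t$ via the automorphism $(t/s)\id_X$. So the lemma is literally false in that boundary case, and no argument will fix it. This is harmless for the paper, since in the only application (showing $|S_2|\equiv 0\pmod{q-1}$) one has $f\neq 0$ by the very definition of $S_2$; the correct reading is that the lemma carries the tacit hypothesis $f\neq 0$.
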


\begin{proof}
We first remark that if $(f+g)^*\neq h^*$ for any $h\in \cd(X,Y)$,
then $(f+tg)^*\neq h^*$ for any $h\in \cd(X,Y)$ and $0\neq t\in k$.

Suppose that $(f+sg)^*=(f+tg)^*$ for some $s\neq t\in k$. There
exists $\phi_X\in \Aut_{\cc}(X)$ such that $(f+sg)\phi_X=(f+tg)$.
Since $X$ is indecomposable, we can write $\phi_X=a+b+c$, where
$a\in k, b\in rad \End_{\cd}(X), c\in \cd(X,\tau^{-1}\Sigma X)$. The
equality $(f+sg)(a+b+c)=(f+tg)$ implies $f(a+b)=f$ and
$-fc=g(s(a+b)-t)$. In particular, one has $a=1$ and $s-t+sb\in
\Aut_{\cd}(X)$ since $s\neq t$ and $sb\in rad \End_{\cd}(X)$. Thus,
we have $g=-fc(s-t+sb)^{-1}$ and $(f+g)(1-c(s-t+sb)^{-1})=f$ which
is a  contradiction.
\end{proof}

\begin{lemma}\label{l:iso}
Let $X,Y\in \ind \cc$ and  $f,h\in \cd(X,Y), g,k\in
\cd(X,\tau^{-1}\Sigma Y)$. If $(f+g)^*=(h+k)^*$, then
$(f+tg)^*=(h+tk)^*$ for any $0\neq t\in k$.
\end{lemma}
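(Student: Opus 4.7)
The plan is to unpack the hypothesis into a concrete equation in the decomposition $\cc(X,X) = \cd(X,X) \oplus \cd(X,\tau^{-1}\Sigma X)$ and then simply scale the $\cd(X,\tau^{-1}\Sigma X)$-component by $t$.

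First I would use the assumption $(f+g)^{*} = (h+k)^{*}$ to fix an automorphism $\phi_X \in \Aut_{\cc}(X)$ with $(f+g)\phi_X = h+k$. Since $X$ is indecomposable, any element of $\End_{\cc}(X)$ splits as $\phi_X = a + b + c$ with $a \in k$, $b \in \mathrm{rad}\,\End_{\cd}(X)$, and $c \in \cd(X,\tau^{-1}\Sigma X)$; invertibility of $\phi_X$ in $\cc$ forces $a \neq 0$.

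Next I would expand $(f+g)(a+b+c)$ using the composition rule in the orbit category. The crucial observation, already recorded just before Lemma~\ref{l:inj-surj}, is that $g \circ c$ is a composition of two morphisms landing in $\tau^{-1}\Sigma(-)$-components and hence vanishes in $\cc$. Therefore
\[
(f+g)(a+b+c) = f(a+b) + \bigl(fc + g(a+b)\bigr),
\]
where the first summand lies in $\cd(X,Y)$ and the second in $\cd(X,\tau^{-1}\Sigma Y)$. Matching components of $(f+g)\phi_X = h+k$ gives the two identities
\[
f(a+b) = h, \qquad fc + g(a+b) = k.
\]

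Now set $\psi_X := a + b + tc$. The element $a+b$ is invertible in $\End_{\cd}(X)$ (since $a \in k^{\times}$ and $b$ is in the radical), and adding any element of $\cd(X,\tau^{-1}\Sigma X)$, which lies in $\mathrm{rad}\,\End_{\cc}(X)$, keeps $\psi_X$ an automorphism of $X$ in $\cc$. Repeating the same expansion (and again using $g \circ c = 0$) yields
\[
(f+tg)\psi_X = f(a+b) + \bigl(f(tc) + tg(a+b)\bigr) = h + t\bigl(fc + g(a+b)\bigr) = h + tk,
\]
which proves $(f+tg)^{*} = (h+tk)^{*}$, as required. There is no serious obstacle here; the only subtlety is the correct bookkeeping of the two components of morphisms in $\cc$ and the vanishing of $gc$ in $\cc$, both of which have already been established in the preceding discussion.
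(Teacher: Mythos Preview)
Your proof is correct and follows essentially the same approach as the paper's own proof: decompose $\phi_X=a+b+c$, match components to get $f(a+b)=h$ and $fc+g(a+b)=k$, then take $\psi=a+b+tc$ and verify $(f+tg)\psi=h+tk$. Your write-up is in fact more careful than the paper's, since you explicitly justify why $\psi_X$ is still an automorphism and invoke the vanishing of $gc$.
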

\begin{proof}
Assume that $\phi_X=a+b+c$, where $a\in k, b\in rad \End_{\cd}(X),
b\in \cd(X,\tau^{-1}\Sigma X)$ and $(f+g)\phi_X=h+k$. An easily
calculation shows that $f(a+b)=h$ and $fc+g(a+b)=k$. One can take
$\psi=a+b+tc$ and verify that $(f+tg)\psi=(h+tk)$.
\end{proof}

Let $X$, $Y$ and $L$ be objects in $\ind\cc$. By definition and
Proposition~\ref{p:hall-num}, we have
$F_{YX}^L=|\Hom_{\cc}(X,L)^*_Y|$. The set $\Hom_{\cc}(X,L)^*_Y$
admits a natural partition:
\[\Hom_{\cc}(X,L)^*_Y=S_1\cup S_2\cup S_3,\]
where \begin{eqnarray*} S_1&=&\{f^*|f\in\cd(X,L) ~~s.t.~~
Con(f)\cong
Y\},\\
S_2&=&\{(f+g)^*|0\neq f\in\cd(X,L),g\in\cd(X,\tau^{-1}\Sigma L)
~~s.t.~~
Con(f+g)\cong Y\}\backslash S_1,\\
S_3&=&\{g^*|0\neq g\in\cd(X,\tau^{-1}\Sigma L)~~s.t.~~Con(g)\cong
Y\}.
\end{eqnarray*}
Therefore
\[F_{YX}^L=|S_1|+|S_2|+|S_3|.\]
It follows from Proposition~\ref{p:indecomposable},
Lemma~\ref{l:noniso} and Lemma~\ref{l:iso} that $|S_2|$ is divisible
by $q-1$ and hence is $0$ in $\mathbb{Z}/(q-1)$. However, note that
$S_2$ is not necessarily empty. Namely, for some $X$ and $L$ there
are nonzero morphisms $f\in \cd(X,L)$ and $g\in
\cd(X,\tau^{-1}\Sigma L)$ such that $(f+g)^*\neq h^*$ for any $h\in
\cd(X,L)$  and the mapping cone of $f+g$ in $\cc$ is indecomposable.
For example, one considers $X=\langle 4\rangle$ and $L=\langle
3\rangle$. Let $f:X\twoheadrightarrow \langle 2\rangle
\rightarrowtail L$ and $g\in \cd(X,\tau^{-1}\Sigma L)$ given by the
short exact sequence
\[\xymatrix{0\ar[r] &\langle -3\rangle \ar[r] &\langle -7\rangle\ar[r] & \langle 4\rangle \ar[r] & 0.}\]
One checks that the cone of $f+g$ is $\langle
-3\rangle$.

It is not hard to see that
\begin{eqnarray*}
 S_1&=&\Hom_{\cd}(X,L)^*_Y~\cup~\Hom_{\cd}(X,L)^*_{\Sigma\tau^{-1}Y},\\
S_3&=&\Hom_{\cd}(X,\Sigma\tau^{-1}L)^*_{\Sigma\tau^{-1}Y}.
\end{eqnarray*} Thus the numbers $|S_1|$ and $|S_3|$ are essentially characterized by the
following proposition due to Peng--Xiao \cite{Pengxiao1998}.
\begin{proposition}\label{p:v}
Let $X,Y,Z$ be indecomposable modules of $\ct_2$. Then in the
triangulated category $\cd=\cd^b(\ct_2)$, we have
\begin{itemize}
\item[1)] $|V_{\cd}(X,Y;Z)|=1$ if and only if there is a triangle of the form
$X\to Z\to Y\to \Sigma X$; otherwise $|V_{\cd}(X,Y;Z)|=0$;
\item[2)] $|V_{\cd}(Z,\Sigma X;Y)|=\frac{|V_{\cd}(X,Y;Z)||\Aut_{\cd}
(Y)||\Hom_{\cd}(Z,X)|^2}{|\Aut_{\cd}(Z)||\Hom_{\cd}(Y,X)|}$;
\item[3)] $|V_{\cd}(\Sigma^{-1}
Y,Z;X)|=\frac{|V_{\cd}(X,Y;Z)||\Aut_{\cd}(X)||\Hom_{\cd}(Y,Z)|^2}{|\Aut_{\cd}(Z)||\Hom_{\cd}(Y,X)|}$.
\end{itemize}
\end{proposition}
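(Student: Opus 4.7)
The plan is to adapt the standard Peng--Xiao rotation arguments to $\cd=\cd^b(\ct_2)$, exploiting that $\ct_2$ is hereditary. The key structural input is that every indecomposable of $\cd$ lies in a single shift $\Sigma^i\ct_2$, and that for $A,B\in\ct_2$ a morphism from $\Sigma^i A$ to $\Sigma^j B$ is zero unless $j=i$ or $j=i+1$, in which case it is a morphism of $\ct_2$ or an element of $\Ext^1_{\ct_2}(A,B)$.

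For part (1), given indecomposables $X,Y,Z$ fitting into a triangle $X\xrightarrow{f}Z\xrightarrow{g}Y\xrightarrow{h}\Sigma X$, the observation above combined with Lemma~\ref{l:decomposable-morphism} forces (up to a uniform shift) the three objects to lie in $\ct_2$ and the triangle to arise from a short exact sequence $0\to X\to Z\to Y\to 0$; the remaining possibilities (split triangle, or $Z=0$) are ruled out by the indecomposability of $Z$. The orbit space $V_\cd(X,Y;Z)$ under $\Aut(X)\times\Aut(Y)$ is then in bijection with the orbits of those classes $[h]\in\Ext^1_{\ct_2}(Y,X)$ giving rise to $Z$; since $\End(X)$ and $\End(Y)$ are local and their central $k^\times$-scalars already act by multiplication on $\Ext^1(Y,X)$, any two such $[h]$ lie in a single orbit, giving $|V_\cd(X,Y;Z)|=1$. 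If no such triangle exists, the set is empty by definition.

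For parts (2) and (3), I would use the rotation bijection $(f,g,h)\mapsto(g,h,-\Sigma f)$, which gives a set-theoretic identification $W(X,Y;Z)\cong W(Z,\Sigma X;Y)$. These two sets carry the different group actions $\Aut(X)\times\Aut(Y)$ and $\Aut(Z)\times\Aut(X)$ respectively, so the orbit counts differ by the ratio of stabilizer sizes. Writing each orbit as $|\Aut|/|\mathrm{Stab}|$, the stabilizers can be read off from the long exact sequences
\[\Hom(\Sigma^{-1}Y,X)\to\End(X)\to\Hom(Z,X)\to\Hom(Y,X)\]
\[\Hom(Z,X)\to\End(Z)\to\Hom(Z,Y)\to\Hom(Z,\Sigma X)\]
obtained by applying $\Hom(-,X)$ and $\Hom(Z,-)$ to the triangle. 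Counting elements in these sequences expresses $|\mathrm{Stab}|$ in terms of $|\Hom(Z,X)|$, $|\Hom(Y,X)|$ and $|\Aut(Z)|$, and produces the asserted ratio. Part (3) is completely dual, using the other rotation $(f,g,h)\mapsto(-\Sigma^{-1}h,f,g)$ and the long exact sequences obtained by applying $\Hom(Y,-)$ to the triangle.

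The main obstacle is the careful identification of $\mathrm{Stab}(f,g,h)$ as a subgroup of the product of automorphism groups. The cleanest route is to recognize it as the set of pairs $(a,c)$ for which there exists an (automatically unique) $b\in\End(Z)$ making the two squares commute; by the axioms of a triangulated category this fits $\mathrm{Stab}(f,g,h)$ into the fibre of the map $\End(Z)\to\Hom(Z,Y)\oplus\Hom(X,Z)$ determined by $f$ and $g$, matching exactly the fibres appearing in the long exact sequences above. Once this identification is set up, converting group orders into $\Hom$ cardinalities is routine and yields the formulas in (2) and (3).
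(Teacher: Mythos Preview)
The paper does not give its own proof of this proposition; it simply attributes the result to Peng--Xiao~\cite{Pengxiao1998} and uses it as a black box. Your outline is precisely the Peng--Xiao method: reduce part~(1) to short exact sequences in $\ct_2$ via hereditarity and Lemma~\ref{l:decomposable-morphism}, and obtain parts~(2) and~(3) from the rotation bijection $W(X,Y;Z)\cong W(Z,\Sigma X;Y)$ together with orbit--stabiliser counting and the long exact $\Hom$-sequences attached to the triangle.

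There is, however, a genuine gap in your sketch of part~(1). You argue that the central $k^\times$-scalars in $\Aut(X)\times\Aut(Y)$ already act transitively on the extension classes $[h]\in\Ext^1_{\ct_2}(Y,X)$ with middle term $Z$. This is not sufficient: for indecomposables in a tube the space $\Ext^1_{\ct_2}(Y,X)$ can have dimension strictly greater than one, and in that case scalar multiplication alone is not transitive on the subset of classes yielding a prescribed indecomposable $Z$. The correct argument uses the \emph{uniseriality} of $\ct_2$: since $Z$ is uniserial it has a unique submodule of each length, hence a unique submodule isomorphic to $X$, so the monomorphisms $X\hookrightarrow Z$ form a single $\Aut(X)$-torsor. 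Invoking Proposition~\ref{p:hall-num} in the form $|V_\cd(X,Y;Z)|=|\Hom_\cd(X,Z)_Y^*|$ then gives the value $1$ directly. With this repair your plan for parts~(2) and~(3) is the standard route and goes through.
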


Now we are in a position to determine the structure of the Ringel--Hall Lie algebra
$\mg(\cc)_{(q-1)}$ of $\cc\cong \cs_w/\Sigma^2$ for $w$ even. Let
$\mathfrak{L}$ be the Lie algebra over $\mathbb{Z}/(q-1)$ with basis
$\{z, u_n,n\in \Z\backslash\{0\}\}$ and structure constants
given by
\begin{itemize}
\item[1)] $[u_m,u_n]=0$ for $m$ and $n$ even;
\item[2)] $[u_m,u_n]=0$ for $m$ and $n$ both odd of the same sign;
\item[3)] $
[u_{2x},u_{2y-1}]=\left
\{\begin{array}{ll}u_{2(x+y)-1}+u_{2(y-x)-1},& x<
y\\u_{2(x+y)-1}-u_{2(x-y)+1},&x\geq y; \end{array}\right.$
\item[4)] $[u_{2x},u_{-2y+1}]=\left\{\begin{array}{ll}-u_{-2(x+y)+1}-u_{2(x-y)+1}, &x< y\\-u_{-2(x+y)+1}+u_{2(y-x)-1}, &x\geq y;\end{array}\right.$
\item[5)] $[u_{-2x},u_{2y-1}]=\left\{\begin{array}{ll}-u_{2(x+y)-1}-u_{2(y-x)-1}, &x< y\\-u_{2(x+y)-1}+u_{2(x-y)+1}, &x\geq y;\end{array}\right.$
\item[6)] $
[u_{-2x},u_{-2y+1}]=\left
\{\begin{array}{ll}u_{-2(x+y)+1}+u_{2(x-y)+1},& x<
y\\u_{-2(x+y)+1}-u_{2(y-x)-1},&x\geq y; \end{array}\right.$
\item[7)]$[u_{2x-1},u_{-2y+1}]=\left\{\begin{array}{ll}u_{2x+2y-2}-u_{-2x-2y+2}+u_{2x-2y}-u_{2y-2x}, & x<y\\-z+u_{4x-2}-u_{-4x+2},&x=y\\u_{2x+2y-2}-u_{-2x-2y+2}+u_{2y-2x}-u_{2x-2y},&x>y;\end{array}\right.$
\item[8)] $[z,u_{n}]=\left\{\begin{array}{ll}0,\text{ for $n$ even}\\4u_n,\text{ for $n$ positive odd}\\-4u_n,\text{ for $n$ negative odd}\end{array}\right.$
\end{itemize}
where $x,y\in \N$.

\begin{theorem}\label{t:w-even}
The assignment $z\mapsto h_{\langle 1\rangle},~ u_{n}\mapsto
u_{\langle n\rangle}, n\in\mathbb{Z}\backslash\{0\}$ linearly
extends to a Lie algebra isomorphism from $\mathfrak{L}$ to
$\mg(\cc)_{(q-1)}$.
\end{theorem}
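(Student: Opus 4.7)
The strategy is to verify directly the Peng--Xiao bracket $[u_{\langle m\rangle}, u_{\langle n\rangle}]$ in $\mg(\cc)_{(q-1)}$ for all indecomposables $\langle m\rangle, \langle n\rangle$ of $\cc$, and match the result against the relations (1)--(8) defining $\mathfrak{L}$. First I would collect the basic data of $\cc$: the set $\ind\cc = \{\langle n\rangle : n \in \mathbb{Z}\setminus\{0\}\}$ consists of indecomposables with local endomorphism rings of residue field $k$, so $d(\langle n\rangle) = 1$; the Grothendieck group is free of rank $1$ on $h_{\langle 1\rangle}$ with $h_{\langle n\rangle} = \mathrm{sgn}(n)\, h_{\langle 1\rangle}$ for $n$ odd and $h_{\langle n\rangle} = 0$ for $n$ even; and $\Sigma \langle n\rangle \cong \langle -n\rangle$ in $\cc$. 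A direct computation of the symmetric Euler form inherited from $\cd^b(\ct_2)$ yields $I_{\cc}(h_{\langle 1\rangle}, h_{\langle n\rangle}) = 4\,\mathrm{sgn}(n)$ for $n$ odd and $0$ for $n$ even, which gives relation (8) (and the trivial relation $[h,h]=0$).

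The core step is the computation of $F^L_{\langle n\rangle \langle m\rangle}$ modulo $q-1$ for each indecomposable $L$. Using the partition $F^L_{\langle n\rangle \langle m\rangle} = |S_1| + |S_2| + |S_3|$ introduced before Proposition~\ref{p:v}, the combination of Proposition~\ref{p:indecomposable}, Lemma~\ref{l:noniso}, and Lemma~\ref{l:iso} yields a free $k^{\times}$-action on $S_2$ by $(f+g)^* \mapsto (f+tg)^*$, hence $(q-1) \mid |S_2|$, and only $|S_1|$ and $|S_3|$ survive modulo $q-1$. As observed in the paper,
\begin{align*}
|S_1| &= |\Hom_{\cd}(\langle m\rangle, L)^*_{\langle n\rangle}| + |\Hom_{\cd}(\langle m\rangle, L)^*_{\Sigma\tau^{-1}\langle n\rangle}|,\\
|S_3| &= |\Hom_{\cd}(\langle m\rangle, \Sigma\tau^{-1} L)^*_{\Sigma\tau^{-1}\langle n\rangle}|.
\end{align*}
By Proposition~\ref{p:hall-num} these are the Hall numbers $|V_{\cd}|$ in the derived category $\cd = \cd^b(\ct_2)$, and Proposition~\ref{p:v} then reduces them to module-category Hall numbers in $\ct_2$, which are determined by the elementary combinatorics of short exact sequences of nilpotent representations of the $2$-cycle quiver.

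The verification then splits into cases according to the parities and signs of $m, n$. The parity controls whether $\langle m\rangle$ and $\Sigma \langle m\rangle$ lie in the same $\tau^{-1}\Sigma$-orbit in $\cd$; the signs select which short exact sequences contribute. Relations (1) and (2) reduce to the symmetry $F^L_{\langle n\rangle \langle m\rangle} = F^L_{\langle m\rangle \langle n\rangle}$ coming from the vanishing of the relevant extensions and homomorphisms. Relations (3)--(6) follow from the explicit list of mixed-parity extensions in $\ct_2$, whose cones are the odd-index indecomposables $\langle 2(x+y)-1\rangle$ and $\langle 2(y-x)-1\rangle$ (with appropriate sign conventions). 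Relation (7), for $[u_{\langle 2x-1\rangle}, u_{\langle -2y+1\rangle}]$, combines the even-index cones $\langle \pm 2(x\pm y)\rangle$ with, in the diagonal case $x = y$, the Peng--Xiao correction $-\delta_{\langle m\rangle, \Sigma \langle n\rangle}\, h_{\langle m\rangle}/d(\langle m\rangle) = -z$, since $\Sigma \langle -(2x-1)\rangle \cong \langle 2x-1\rangle$ in $\cc$.

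The main obstacle is relation (7): one must reconcile three effects simultaneously, namely (a) the non-properness of $\cc$ means that $L$-summands with $h_L = 0$ still contribute to the $u$-part of the bracket, so the sum does not collapse as in the proper case; (b) the $\tau^{-1}\Sigma$-twist in the orbit category causes certain derived-category cones to be counted twice in $|S_1|$ in some parity configurations, requiring careful bookkeeping to avoid double-counting; and (c) the diagonal $h$-term appears exactly when $m = -n$ with $m$ odd, and its normalization $h_{\langle m\rangle}/d(\langle m\rangle) = \mathrm{sgn}(m)\, z$ must be tracked precisely. Once every case is matched, the map $z \mapsto h_{\langle 1\rangle}$, $u_n \mapsto u_{\langle n\rangle}$ is bijective on the prescribed bases and preserves all structure constants, hence extends to the claimed Lie algebra isomorphism $\mathfrak{L} \to \mg(\cc)_{(q-1)}$.
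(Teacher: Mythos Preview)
Your proposal is correct and follows essentially the same approach as the paper: both use the partition $F^L_{YX}=|S_1|+|S_2|+|S_3|$, kill $|S_2|$ modulo $q-1$ via Proposition~\ref{p:indecomposable} and Lemmas~\ref{l:noniso}--\ref{l:iso}, reduce $|S_1|$ and $|S_3|$ to derived-category Hall numbers by Proposition~\ref{p:v}, and then run the parity/sign case analysis (the paper writes out case (3) in full and leaves the rest as ``similarly''). One small remark: your concern (b) about double-counting in $|S_1|$ is unnecessary, since $Y\in\ct_2$ and $\Sigma\tau^{-1}Y$ lies in cohomological degree $1$ in $\cd$, so $\Hom_{\cd}(X,L)^*_Y$ and $\Hom_{\cd}(X,L)^*_{\Sigma\tau^{-1}Y}$ are automatically disjoint.
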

\begin{proof}
 Let us check that $u_{\langle n\rangle},n\in\mathbb{Z}\backslash\{0\}$ satisfy 3). Similarly one checks that they together with
$h_{\langle 1\rangle}$ satisfy other relations. Let
$x,y\in\mathbb{N}$ and $l\in\mathbb{Z}\backslash\{0\}$. By the
arguments before Proposition~\ref{p:v}, we have
\begin{eqnarray*}
 F_{\langle 2y-1\rangle,\langle 2x\rangle}^{\langle l\rangle} &=& |V_{\cd}(\langle 2x\rangle,\langle 2y-1\rangle;\langle l\rangle)| + |V_{\cd}(\langle 2x\rangle,\Sigma\tau^{-1}\langle 2y-1\rangle;\langle l\rangle)|\\
&&\mbox{} + |V_{\cd}(\langle 2x\rangle,\Sigma\tau^{-1}\langle 2y-1\rangle;\Sigma\tau^{-1}\langle l\rangle)|\\
&=&|V_{\cd}(\langle 2x\rangle,\langle 2y-1\rangle;\langle l\rangle)| + |V_{\cd}(\langle 2x\rangle,\Sigma\tau^{-1}\langle 2y-1\rangle;\langle l\rangle)|\\
&&\mbox{}  + |V_{\cd}(\Sigma^{-1}\tau\langle 2x\rangle,\langle 2y-1\rangle;\langle l\rangle)|\\
&=&|V_{\cd}(\langle 2x\rangle,\langle 2y-1\rangle;\langle l\rangle)| + |V_{\cd}(\langle 2x\rangle,\Sigma\langle -2y+1\rangle;\langle l\rangle)|\\
&&\mbox{}  + |V_{\cd}(\Sigma^{-1}\langle -2x\rangle,\langle 2y-1\rangle;\langle l\rangle)|\\
&\equiv&|V_{\cd}(\langle 2x\rangle,\langle 2y-1\rangle;\langle l\rangle)| + |V_{\cd}(\langle -2y+1\rangle,\langle l\rangle;\langle 2x\rangle)|\\
&&\mbox{}  + |V_{\cd}(\langle l\rangle,\langle -2x\rangle;\langle
2y-1\rangle)| \pmod{q-1},
\end{eqnarray*}
where the last congruence follows from Proposition~\ref{p:v} 2) 3) and the fact that the number of automorphisms of an indecomposable object in $\cd$ is the product of $q-1$ and a power of $q$. Now it follows from Proposition~\ref{p:v} 1) that
\begin{eqnarray*}
 |V_{\cd}(\langle 2x\rangle,\langle 2y-1\rangle;\langle l\rangle)|&=&\begin{cases} 1 & \text{ if } l=2(x+y)-1\\0 &\text{ else};\end{cases}\\
|V_{\cd}(\langle -2y+1\rangle,\langle l\rangle;\langle 2x\rangle)|&=& 0;\\
|V_{\cd}(\langle l\rangle,\langle -2x\rangle;\langle
2y-1\rangle)|&=&\begin{cases} 1 & \text{ if } x< y \text{ and }
l=2(y-x)-1\\ 0 & \text{ else}.\end{cases}
\end{eqnarray*}
Therefore, we have
\begin{eqnarray*}
 F_{\langle 2y-1\rangle,\langle 2x\rangle}^{\langle l\rangle} &\equiv& \begin{cases} 1 & \text{ if } l=2(x+y)-1\\ 1 & \text{ if } x< y \text{ and } l=2(y-x)-1\\ 0 &\text{ else}.\end{cases}\pmod{q-1}\end{eqnarray*}
Similarly, we have
\begin{eqnarray*}
 F_{\langle 2x \rangle, \langle 2y-1 \rangle}^{\langle l \rangle} &\equiv& \begin{cases} 1 & \text{ if } x\geq y \text{ and } l=2(x-y)+1\\ 0&\text{ else}.\end{cases}\pmod{q-1}
\end{eqnarray*}
The desired result follows immediately from the definition of the bracket
\begin{eqnarray*}
 [u_{\langle 2x\rangle},u_{\langle 2y-1\rangle}]&=&\sum_{l\in\mathbb{Z}\backslash\{0\}}(F_{\langle 2y-1\rangle,\langle 2x\rangle}^{\langle l\rangle}-F_{\langle 2x \rangle, \langle 2y-1 \rangle}^{\langle l \rangle})u_{\langle l\rangle}.
\end{eqnarray*}
\end{proof}

\begin{remark} 1) 3) 4) 5) 6) 8) imply that each $u_{2x}+u_{-2x}$
($x\in\mathbb{N}$) is central in $\mathfrak{L}$. A direct
computation shows that they form a basis of the center.
\end{remark}

From Theorem~\ref{t:w-even} we see that the integral Ringel--Hall
Lie algebra of $\cc$ is isomorphic the Lie algebra over $\mathbb{Z}$
with basis $\{z,u_n,n\in\mathbb{Z}\backslash\{0\}\}$ and structure
constants given by 1)--8). We take the quotient of this Lie algebra by its
center and extend the scalars to $\mathbb{Q}$. The resulting Lie algebra has a basis $\{a_x|x\in\mathbb{N}\cup\{0\}\}\cup \{b_y,c_y|y\in\mathbb{N}-\frac{1}{2}\}$ ($a_x=\bar
u_{2x}$ for $x\in\mathbb{N}$, $a_0=\frac{1}{2}\bar{z}$, $b_y=\bar u_{2y},c_y=\bar u_{-2y}$ for $y\in\mathbb{N}-\frac{1}{2}$) with structure
constants
\begin{itemize}
\item[$\cdot$] $[a_x,a_{x'}]=0$, $[b_y,b_{y'}]=0$, $[c_y,c_{y'}]=0$;
\item[$\cdot$] $[a_x,b_y]=b_{y+x}+\mathrm{sgn}(y-x)b_{|y-x|}$,
$[a_x,c_y]=-c_{y+x}-\mathrm{sgn}(y-x)c_{|y-x|}$;
\item[$\cdot$] $[b_y,c_{y'}]=2a_{y+y'}-2a_{|y-y'|}$.
\end{itemize}
where for an integer $r$, $\mathrm{sgn}(r)=1$ if $r$ is positive and $\mathrm{sgn}(r)=-1$ if $r$ is negative.

\section{Appendix: An explicit equivalence}
Let $k$ be a field and $d$ be a
nonzero integer. Let $\Gamma$ be the graded algebra $k[t]$ with
$\mathrm{deg}(t)=d$, viewed as a dg algebra with trivial
differential. Let $n$ be a positive integer. In Section~\ref{s:triangle-structure} we proved that the orbit category of
$\cd_{fd}(\Gamma)/\Sigma^n$ admits a canonical triangle structure and is triangle equivalent to a certain orbit category of the bounded derived category of a standard tube.
In this appendix, we construct an explicit equivalence,
provided that $n$ is even.

\subsection{Induced functors}\label{ss:induced-functor} We follow~\cite{Keller08d}. Let $\cc$
and $\cc'$ be a $k$-linear category, $F:\cc\rightarrow\cc$ and
$F':\cc'\rightarrow\cc'$ be auto-equivalences, and $\cc/F$ and
$\cc'/F'$ be the corresponding orbit categories. Let $(\Phi,\alpha)$
be an \emph{$(F,F')$-equivariant functor}, \ie
$\Phi:\cc\rightarrow\cc'$ is a $k$-linear functor and $\alpha:\Phi
F\rightarrow F'\Phi$ is a natural isomorphism. Then $(\Phi,\alpha)$
induces a $k$-linear functor $\bar{\Phi}:\cc/F\rightarrow\cc'/F'$:
for $f\in \Hom_{\cc}(X,F^p Y)$ the image $\bar{\Phi}(f)$ is the
composition
\[\xymatrix@!C=1.5pc{\Phi X\ar[rr]^{\Phi f} && \Phi F^pY \ar[rr]^{\alpha_{\scriptstyle{F^{p-1}Y}}} && F\Phi F^{p-1}Y \ar[rr]^(.6){F\alpha_{\scriptstyle{F^{p-2}Y}}} && \cdots\ar[rr]^{F^{p-1}\alpha_{\scriptstyle{Y}}} && F^p\Phi Y.}\]
In particular, the $(F,F)$-equivariant functor
$(\mathrm{id}_\cc,\epsilon\id_F)$ induces a functor
$\cc/F\rightarrow\cc/F$, denoted by $\Delta(\epsilon)$. If $\cc$ is
triangulated and $F$ is a triangle auto-equivalence, then the
suspension functor of $\cc/F$ is induced by the $(F,F)$-equivariant
functor $(\Sigma,\phi)$, where $\phi:\Sigma F\rightarrow F\Sigma$ is
the natural isomorphism in the triangle structure of $F$.

Let $F_1,F_2:\cc\rightarrow\cc$ be two $k$-linear functors endowed
with commutation morphisms
\[\phi_{ij}:F_iF_j\rightarrow F_j F_i, i,j=1,2.\]
We assume that $\phi_{ii}=\epsilon_i \id_{F_iF_i}$ and that
$\phi_{ij}$ is the inverse of $\phi_{ji}$ for $i,j=1,2$. Let $F=F_1
F_2$. Then the above commutation morphisms yield two
$(F,F)$-equivariant functors $(F_1,
F_1\phi_{12}:F_1F_1F_2\rightarrow F_1F_2F_1)$ and $(F_2,
\phi_{21}F_2:F_2F_1F_2\rightarrow F_1F_2F_2)$. It follows
from~\cite[Section 2.2]{Keller08d} that the induced functors
$\bar{F}_1$ and $\bar{F}_2$ satisfies: $\bar{F}_1\bar{F}_2\cong
\Delta(\epsilon_1\epsilon_2\id_F)$.

\subsection{The universal property of triangulated orbit
categories}\label{ss:universal-property}


Let us be given a triangle auto-equivalence $F:\cc\rightarrow\cc$, a triangle functor $\Phi:\cc\rightarrow\cc'$ and a natural isomorphism $\Phi\simeq \Phi\circ F$. Assume that they all admit dg lifts, and that the orbit category $\cc/F$ admits a canonical triangle structure. Then by~\cite[Section
9.4]{Keller05}, the triangle functor
 $\Phi$ induces a
triangle functor
\[\bar{\Phi}:\cc/F\longrightarrow \cc'\]
with a natural isomorphism $\bar{\Phi}\simeq\bar{\Phi}\circ \pi_\cc$.

\begin{lemma}\label{l:universal-property} Keep the above notations and assumptions. \begin{itemize}
\item[(a)] If $\Phi$ is essentially surjective, so is $\bar{\Phi}$.
\item[(b)] If $\Phi$ induces bijections for any objects $X$ and $Y$ of $\cc$
\[\bigoplus_{p\in\Z}\Hom_{\cc}(X,F^p Y)\longrightarrow \Hom_{\cc'}(\Phi X,\Phi
Y),\] then the functor $\bar{\Phi}$ is fully faithful.
\end{itemize}
\end{lemma}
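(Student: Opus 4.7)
The plan is to use the explicit description of $\bar{\Phi}$ coming from Keller's construction of the induced functor out of the dg orbit category. Recall that $\pi_\cc : \cc \to \cc/F$ is the identity on objects and that the canonical triangle structure on $\cc/F$ is built so that its morphism spaces coincide, as $k$-modules, with $\bigoplus_{p \in \Z} \Hom_\cc(X, F^p Y)$; the $p$-th summand being the natural image of $\Hom_\cc(X, F^p Y)$ under the identification $Y = \pi_\cc(F^p Y)$ in $\cc/F$. This identification of morphism spaces is built into Keller's dg orbit category construction in~\cite[Section 9]{Keller05}, and it is the only place where the hypothesis that $\cc/F$ carries a canonical triangle structure will be used.

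Part (a) will then be immediate. Given $Y' \in \cc'$, essential surjectivity of $\Phi$ furnishes an object $X \in \cc$ with $\Phi(X) \cong Y'$, and since $\Phi \simeq \bar{\Phi} \circ \pi_\cc$ and $\pi_\cc$ is the identity on objects, we get $\bar{\Phi}(X) \cong \Phi(X) \cong Y'$.

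For part (b), the key step is to identify $\bar{\Phi}$ explicitly on morphism spaces. Writing $\alpha : \Phi F \iso \Phi$ for (the inverse of) the given natural isomorphism and $\alpha^p_Y : \Phi F^p Y \iso \Phi Y$ for the iterate built from $\alpha$, the induced functor sends a homogeneous component $f \in \Hom_\cc(X, F^p Y) \subseteq \Hom_{\cc/F}(X, Y)$ to the composite $\alpha^p_Y \circ \Phi(f) : \Phi X \to \Phi Y$, as recorded in Section~\ref{ss:induced-functor}. Summing over $p$, the induced map
\[
\bar{\Phi}_{X,Y} : \bigoplus_{p \in \Z} \Hom_\cc(X, F^p Y) \longrightarrow \Hom_{\cc'}(\Phi X, \Phi Y)
\]
differs from the map in the hypothesis only by post-composition with the isomorphisms $\alpha^p_Y$ on each summand, hence is itself bijective. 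This gives fully-faithfulness of $\bar{\Phi}$.

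The main (and essentially only) obstacle is the bookkeeping: one must verify that the description above holds on the nose for Keller's canonical triangle structure, not merely up to some intermediate equivalences, and that the $\alpha^p_Y$ produced by iterating $\alpha$ agree with the ones appearing in Keller's construction of $\bar{\Phi}$. This is a direct unravelling of~\cite[Section 9]{Keller05} once the dg lifts are kept track of, so no new ideas are required beyond those already recalled in Sections~\ref{ss:induced-functor} and~\ref{ss:universal-property}.
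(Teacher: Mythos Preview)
Your argument is correct and follows essentially the same route as the paper: for (a) you use that $\bar\Phi$ agrees with $\Phi$ on objects, and for (b) you identify the map $\bar\Phi_{X,Y}$ on morphisms as the sum over $p$ of $\Phi$ followed by the isomorphism $\Hom_{\cc'}(\Phi X,\Phi F^p Y)\cong\Hom_{\cc'}(\Phi X,\Phi Y)$, which is exactly the paper's proof. Your extra remark that the map in the hypothesis might differ from $\bar\Phi_{X,Y}$ by the summandwise isomorphisms $\alpha^p_Y$ is a harmless (and arguably more careful) variant of the paper's claim that the two maps coincide.
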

\begin{proof} (a) This is because on objects $\bar{\Phi}$ takes the same value as
$\Phi$.

(b) For objects $X$ and $Y$ of $\cc$ and for an integer $p$, the
functor $\Phi$ gives us a map
\[\Hom_{\cc}(X,F^p Y)\stackrel{\Phi(X,F^p Y)}{\longrightarrow} \Hom_{\cc'}(\Phi X,\Phi F^p Y)\stackrel{\sim}{\longrightarrow}\Hom_{\cc'}(\Phi X,\Phi  Y).\]
Summing them up for all integers $p$ yields a map
\[\bigoplus_{p\in\Z}\Hom_{\cc}(X,F^p Y)\longrightarrow \Hom_{\cc'}(\Phi X,\Phi
Y),\]  which is precisely the map $\bar{\Phi}(X,Y)$. Therefore,
$\bar{\Phi}$ is fully faithful under the assumption.
\end{proof}

\begin{remark} In this appendix, we will apply
Lemma~\ref{l:universal-property} without checking the existence of
the required dg lift: this is standard in all cases. \end{remark}

\subsection{Hereditary graded algebras}\label{ss:hereditary-algebra}

Let $\Gamma$ be a hereditary graded $k$-algebra. Let
$\Grmod(\Gamma)$ be the category of graded modules over the graded
algebra $\Gamma$ and $\grmod(\Gamma)$ be its subcategory of
finite-dimensional graded modules. Let $\langle 1\rangle$ be the
degree shifting functor of $\Grmod(\Gamma)$. We grade our algebras
and modules cohomologically, so for a graded $\Gamma$-module
$M=\bigoplus_{p\in\mathbb{Z}} M^p$ the degree shifting $M\langle
1\rangle$ is defined by $(M\langle 1\rangle)^p=M^{p+1}$. Consider
$\Gamma$ as a dg $k$-algebra with trivial differential, and let
$\cd(\Gamma)$ and $\cd_{fd}(\Gamma)$ respectively denote the derived
category and the finite-dimensional derived category. The two
categories $\cd(\Gamma)$ and $\Grmod(\Gamma)$ are closely related.

\begin{lemma}\label{l:H^*}
 The taking total cohomology functor $H^*:\cd(\Gamma)\rightarrow \Grmod(\Gamma)$
 induces a bijection from the isoclasses of
 indecomposable objects of $\cd(\Gamma)$ to those of $\Grmod(\Gamma)$ and satisfies $H^*\circ\Sigma=\langle 1\rangle\circ H^*$.
\end{lemma}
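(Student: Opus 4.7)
The compatibility $H^*\circ\Sigma=\langle 1\rangle\circ H^*$ will be a direct consequence of the definition of the dg shift: one has $(\Sigma M)^p=M^{p+1}$, so $H^p(\Sigma M)=H^{p+1}(M)=(H^*(M)\langle 1\rangle)^p$. For the bijection on isoclasses of indecomposables, the plan is to produce an inverse up to isomorphism to $H^*$, namely the functor $\iota:\Grmod(\Gamma)\hookrightarrow\cd(\Gamma)$ sending a graded $\Gamma$-module to itself, viewed as a dg module with trivial differential. The identity $H^*\circ\iota=\id$ holds by construction.

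The second step is to verify that $\iota$ both preserves and reflects indecomposability. For a graded module $N$, resolving $\iota N$ via the mapping cone of $\iota P^1\to\iota P^0$, where $0\to P^1\to P^0\to N\to 0$ is a graded projective resolution (of length $\leq 1$ by hereditariness), one computes $\End_{\cd(\Gamma)}(\iota N)=\End_{\Grmod(\Gamma)}(N)$. Since indecomposability is detected by the absence of non-trivial idempotents in the endomorphism ring, it transfers across $\iota$ in both directions.

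The key step, and the anticipated main obstacle, is essential surjectivity of $\iota$ up to isomorphism, that is, every $M\in\cd(\Gamma)$ is isomorphic to $\iota(H^*(M))$ -- in other words, every dg $\Gamma$-module is formal. This is where the hereditariness of $\Gamma$ enters decisively. The same resolution argument as above extends to an identification $\Hom_{\cd(\Gamma)}(\iota X,\Sigma^i\iota Y)=\Ext^i_{\Grmod(\Gamma)}(X,Y)$ for graded modules $X,Y$, and hereditariness forces these groups to vanish for $i\geq 2$. This vanishing should kill all obstructions to formality: on a minimal semifree model of $M$, any correction to the trivial differential would have to be recorded in $\Ext^{\geq 2}$-type groups and is therefore zero, so the minimal model has zero differential and coincides with $\iota(H^*(M))$. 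Equivalently, one can phrase this as the vanishing of higher Massey products, or higher $A_\infty$-operations, on $H^*\Hom^\bullet_\Gamma(R,R)$ for a cofibrant replacement $R$ of $M$.

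Combining the three steps, $\iota$ and $H^*$ become mutually inverse on isoclasses of objects, and this bijection restricts to one between isoclasses of indecomposable objects of $\cd(\Gamma)$ and of $\Grmod(\Gamma)$, while the compatibility with $\Sigma$ and $\langle 1\rangle$ is already established.
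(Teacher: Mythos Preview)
Your overall plan---show that every dg $\Gamma$-module is formal and then transfer indecomposability across the inclusion $\iota$---is sound and gives a direct argument where the paper simply invokes \cite[Theorem~3.1]{KellerYangZhou09}. However, two of your intermediate computations are wrong, because you are implicitly working in $\cd(\Grmod(\Gamma))$ (the derived category of the abelian category of graded modules) rather than in $\cd(\Gamma)$ (the derived category of the dg algebra). These categories differ: as Lemma~\ref{l:tot} makes precise, the latter is an orbit of the former by $\Sigma\circ\langle-1\rangle$, so morphisms in $\cd(\Gamma)$ collect contributions from several cohomological degrees. Concretely, your two-term resolution gives
\[
\Hom_{\cd(\Gamma)}(\iota X,\Sigma^i\iota Y)\;\cong\;\Hom_{\Grmod(\Gamma)}(X,Y\langle i\rangle)\ \oplus\ \Ext^1_{\Grmod(\Gamma)}(X,Y\langle i-1\rangle),
\]
not $\Ext^i_{\Grmod(\Gamma)}(X,Y)$. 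In particular $\End_{\cd(\Gamma)}(\iota N)$ need not equal $\End_{\Grmod(\Gamma)}(N)$: for $\Gamma=k[t]$ with $\deg t=1$ and $N=k$, the former is $k[s]/(s^2)$ (two-dimensional), the latter is $k$. So both your idempotent argument for indecomposability and your $\Ext^{\geq 2}$-vanishing argument for formality, as written, rest on a false identity.

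Fortunately neither conclusion actually needs that identity. Indecomposability transfers directly: $H^*$ detects zero objects in $\cd(\Gamma)$, so a decomposition $\iota N\cong A\oplus B$ yields $N\cong H^*A\oplus H^*B$, and conversely a splitting of $N$ obviously splits $\iota N$. For formality, replace your obstruction sketch by the following concrete argument. Take a semifree resolution $P\to M$; its underlying graded $\Gamma$-module is free, and since $\Gamma$ is hereditary the graded submodule $Z(P)\subset P$ of cocycles is projective, so the inclusion splits in $\Grmod(\Gamma)$. Composing a retraction $P\to Z(P)$ with the quotient $Z(P)\twoheadrightarrow H^*(P)$ gives a graded $\Gamma$-linear map $P\to H^*(P)$ which annihilates $d(P)$, hence is a dg-module morphism $P\to\iota H^*(P)$ inducing the identity on cohomology. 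Thus $M\simeq P\simeq\iota H^*(M)$, and your three-step plan goes through.
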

\begin{proof}
Apply \cite[Theorem 3.1]{KellerYangZhou09} to the triangulated
category $\cd(\Gamma)$ and its compact generator $\Gamma$.
\end{proof}

A complex of graded $\Gamma$-modules can be viewed as a bicomplex.
Taking total complex induces a triangle functor from
$\cd(\Grmod(\Gamma))$ to $\cd(\Gamma)$ which restricts to a triangle
functor from $\cd^b(\grmod(\Gamma))$ to $\cd_{fd}(\Gamma)$. Let us
denote it by $\Tot$.

\begin{lemma}\label{l:tot}
\begin{itemize}
\item[(a)] The functor $\Tot$ is essentially surjective.
\item[(b)] We have a natural isomorphism of triangle functors $\Tot\circ\Sigma\circ \langle -1\rangle\cong\Tot$.
\item[(c)] For two objects $X$ and $Y$ of $\cd(\Grmod(\Gamma))$, we have a
bijection induced by $\Tot$
\[\bigoplus_{p\in\mathbb{Z}} \Hom_{\cd(\Grmod(\Gamma))}(X,\Sigma^p Y\langle -p\rangle)\longrightarrow \Hom_{\cd(\Gamma)}(\Tot X,\Tot Y).\]
\end{itemize}
\end{lemma}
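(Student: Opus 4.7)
I would establish (a), (b), and (c) in turn, using Lemma~\ref{l:H^*} and the hereditariness of $\Gamma$ as a graded algebra in essential ways.

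For (a) I would exploit Lemma~\ref{l:H^*}, which together with Krull--Schmidt implies that two objects of $\cd_{fd}(\Gamma)$ with isomorphic total cohomology in $\Grmod(\Gamma)$ are themselves isomorphic. Given $N \in \cd_{fd}(\Gamma)$, I would set $M = H^*(N)$ and view it as a complex in $\cd^b(\grmod(\Gamma))$ concentrated in cohomological degree zero. Then $\Tot(M)$ is just $M$ regarded as a dg $\Gamma$-module with trivial differential, so $H^*(\Tot(M)) = M = H^*(N)$, whence $\Tot(M) \cong N$ and essential surjectivity follows.

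For (b) the argument is a direct unwinding of definitions. Writing $X^{i,j}$ for the internal degree $j$ piece of $X^i$, one reads off
\[
\Tot(\Sigma X\langle -1\rangle)^n = \bigoplus_{i+j=n} X^{i+1,j-1} = \bigoplus_{i'+j'=n} X^{i',j'} = \Tot(X)^n,
\]
and a check against the Koszul sign conventions then shows that the differentials and $\Gamma$-actions agree under this reindexing, providing the required natural isomorphism of triangle functors.

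For (c) I would use (b) to assemble the map: for each $p \in \Z$, the composite
\[
\Hom_{\cd(\Grmod(\Gamma))}(X,\Sigma^p Y\langle -p\rangle) \xrightarrow{\Tot} \Hom_{\cd(\Gamma)}(\Tot X,\Tot(\Sigma^p Y\langle -p\rangle)) \stackrel{\sim}{\longrightarrow} \Hom_{\cd(\Gamma)}(\Tot X,\Tot Y)
\]
uses (b) iterated $p$ times, and summing over $p$ yields the claimed map. To prove bijectivity I would argue by d\'evissage. Hereditariness of $\Gamma$ as a graded algebra forces every object of $\cd^b(\grmod(\Gamma))$ to split as $\bigoplus_i \Sigma^{-i}H^i(-)$, and by part (a) the analogous splitting holds on the $\cd_{fd}(\Gamma)$ side. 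This reduces the bijectivity to the case $X, Y \in \grmod(\Gamma)$, where by hereditariness only $p = 0, 1$ contribute to the left-hand side, and it becomes $\Hom_{\grmod}(X,Y) \oplus \Ext^1_{\grmod}(X,Y\langle -1\rangle)$; the right-hand side yields the same direct sum once $\Hom_{\cd(\Gamma)}(X,Y)$ is computed from a graded projective resolution of $X$ of length at most one, whose total complex is a dg projective resolution of $\Tot X = X$. The main obstacle will be tracking signs and internal shifts through this last computation so that everything assembles consistently with the natural isomorphism from (b).
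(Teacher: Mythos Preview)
Your arguments for (a) and (b) are essentially the paper's: (a) via Lemma~\ref{l:H^*}, (b) by direct inspection.

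For (c), your approach is correct but differs from the paper's in the d\'evissage step, and this difference has a cost. You reduce to $X,Y\in\grmod(\Gamma)$ by using that hereditariness forces every object of $\cd^b(\grmod(\Gamma))$ to split as the sum of its shifted cohomologies, and then identify both sides with $\Hom_{\grmod}(X,Y)\oplus\Ext^1_{\grmod}(X,Y\langle-1\rangle)$. This is fine, but it only establishes the bijection for $X,Y\in\cd^b(\grmod(\Gamma))$, whereas the lemma is stated for arbitrary $X,Y\in\cd(\Grmod(\Gamma))$; the cohomology-splitting argument does not extend to unbounded or infinite-dimensional objects. The paper instead uses that $\Tot$ commutes with infinite direct sums and that $\Gamma$ is a compact generator: by infinite d\'evissage one reduces to $X=\Gamma$ and $Y=\Sigma^q\Gamma\langle q'\rangle$, where both sides are immediately seen to equal $\Gamma^{q+q'}$. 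This yields the full statement with a one-line computation and no $\Ext$ bookkeeping or sign tracking.

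In short: your route is sound for the bounded finite-dimensional case (which is all that is needed for Proposition~\ref{p:finite-dimensional-derived-category-as-orbit-category}), but to match the lemma as stated you should either restrict the claim or switch to the compact-generator d\'evissage, which is also the cleaner argument.
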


\begin{proof} The statement (a) is a consequence of
Lemma~\ref{l:H^*} and the statement (b) follows immediately from the
definition of $\Tot$. The existence of the bifunctorial map
\[\bigoplus_{p\in\mathbb{Z}} \Hom_{\cd(\Grmod(\Gamma))}(X,\Sigma^p
Y\langle-p\rangle)\longrightarrow \Hom_{\cd(\Gamma)}(\Tot X,\Tot
Y)\] was shown in the proof of Lemma~\ref{l:universal-property}. It
remains to prove the bijectivity. Since $\Tot$ commutes with
infinite direct sums, by infinite d\'{e}vissage it suffices to prove
this for $X=\Gamma$ and $Y=\Sigma^q \Gamma\langle q'\rangle$ for all
integers $q$ and $q'$. We have
\begin{eqnarray*}
LHS&=&\bigoplus_{p\in\mathbb{Z}}
\Hom_{\cd(\Grmod(\Gamma))}(\Gamma,\Sigma^p \Sigma^q\Gamma\langle
q'\rangle\langle-p\rangle)\\
&=&\Hom_{\cd(\Grmod(\Gamma))}(\Gamma,\Gamma\langle q+q'\rangle)\\
&\cong&H^0\Gamma\langle q+q'\rangle\\
&\cong&\Hom_{\cd(\Gamma)}(\Gamma,\Sigma^{q+q'}\Gamma)\\
&=&RHS.
\end{eqnarray*}
\end{proof}

The functor $\Sigma\circ\langle -1\rangle$ is a triangle
auto-equivalence of $\cd^b(\grmod(\Gamma))$. It satisfies the
conditions in Theorem~\ref{t:keller's-theorem}, and hence the orbit
category $\cd^b(\grmod(\Gamma))/\Sigma\circ\langle-1\rangle$ admits
a canonical triangle structure.

\begin{proposition}\label{p:finite-dimensional-derived-category-as-orbit-category} The orbit category
$\cd^b(\grmod(\Gamma))/\Sigma\circ\langle-1\rangle$ is triangle
equivalent to $\cd_{fd}(\Gamma)$.
\end{proposition}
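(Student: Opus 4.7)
The plan is to apply the universal property of triangulated orbit categories (Lemma~\ref{l:universal-property}) to the total complex functor
\[
\Tot:\cd^b(\grmod(\Gamma))\longrightarrow\cd_{fd}(\Gamma)
\]
and check that the induced functor $\overline{\Tot}$ is a triangle equivalence. The inputs are already in place: $F:=\Sigma\circ\langle-1\rangle$ is a triangle auto-equivalence of $\cd^b(\grmod(\Gamma))$ satisfying the hypotheses of Theorem~\ref{t:keller's-theorem}, so the orbit category $\cd^b(\grmod(\Gamma))/F$ carries its canonical triangle structure, and Lemma~\ref{l:tot}(b) supplies a natural isomorphism $\Tot\cong\Tot\circ F$. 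Hence the universal property produces a triangle functor
\[
\overline{\Tot}:\cd^b(\grmod(\Gamma))/(\Sigma\circ\langle-1\rangle)\longrightarrow \cd_{fd}(\Gamma).
\]

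Next I would verify essential surjectivity and full faithfulness by feeding the two parts of Lemma~\ref{l:tot} into the two parts of Lemma~\ref{l:universal-property}. Essential surjectivity of $\overline{\Tot}$ is immediate from Lemma~\ref{l:tot}(a) via Lemma~\ref{l:universal-property}(a), since $\overline{\Tot}$ agrees with $\Tot$ on objects. For full faithfulness, I would unwind the Hom-spaces in the orbit category: for $X,Y\in\cd^b(\grmod(\Gamma))$ one has
\[
\Hom_{\cd^b(\grmod(\Gamma))/F}(X,Y)=\bigoplus_{p\in\mathbb{Z}}\Hom_{\cd^b(\grmod(\Gamma))}(X,F^pY)=\bigoplus_{p\in\mathbb{Z}}\Hom_{\cd^b(\grmod(\Gamma))}(X,\Sigma^pY\langle -p\rangle),
\]
which is precisely the left-hand side of the bijection in Lemma~\ref{l:tot}(c). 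That bijection, composed with the identifications in Lemma~\ref{l:universal-property}(b), is exactly the map $\overline{\Tot}(X,Y)$, so $\overline{\Tot}$ is fully faithful.

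There is essentially no obstacle here beyond bookkeeping: the two preparatory lemmas have been crafted to fit the universal property precisely, so the proof is the observation that the two sides match. The only point that requires a small remark is the standard dg-lift hypothesis in Lemma~\ref{l:universal-property} (flagged in the remark following that lemma), which is routine for $\Tot$ since it is induced by an honest dg bimodule. Combining these three steps yields that $\overline{\Tot}$ is a triangle equivalence, proving the proposition.
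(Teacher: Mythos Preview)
Your proof is correct and follows exactly the paper's approach: the paper's proof simply says ``This follows from Lemma~\ref{l:tot} and Lemma~\ref{l:universal-property},'' and your argument is a faithful unpacking of that sentence. There is nothing to add.
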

\begin{proof} This follows from Lemma~\ref{l:tot} and
Lemma~\ref{l:universal-property}.
\end{proof}


\subsection{Graded modules over $\Gamma$ and quiver representations}

Let $\overrightarrow{A}^{\infty}_{\infty}$ be the following quiver
of type $A^{\infty}_{\infty}$
\[\xymatrix{\ldots   & i-1 \ar[l] & i \ar[l] & i+1\ar[l] &\ldots\ar[l]~. }\]
Let $Q=Q^d$ be the disjoint union of $|d|$-copies of
$\overrightarrow{A}^{\infty}_{\infty}$, whose vertices are labeled
$(j,i)$, $0\leq j \leq |d|-1$, $i\in\mathbb{Z}$. Let $\sigma$ be the
unique automorphism of $Q$ which takes the following values on
vertices
\begin{eqnarray*}
\sigma(j,i)&=&(j-1-\lfloor\frac{j-1}{|d|}\rfloor|d|,i+sgn(d)\lfloor\frac{j-1}{|d|}\rfloor)\\
&=&\begin{cases} (j-1,i) & \text{if } 1\leq j\leq |d|-1\\
(|d|-1,i+sgn(d)) & \text{if } j=0~,\end{cases}
\end{eqnarray*}
where $sgn(d)$ is the sign of $d$, and $\lfloor x\rfloor$ is the
greatest integer smaller than or equal to $x$. Pushing out along the
automorphism $\sigma$ is an auto-equivalence of $\Rep(Q)$, still
denoted by $\sigma$. For a finite-dimensional representation $M$
written as a tuple $M=(M_0,M_1,\ldots,M_{|d|-1})$, we have
$\sigma(M)=(M_1,M_2,\ldots,M_{|d|-1},\tau^{-sgn(d)}M_0)$, where
$\tau$ is the Auslander--Reiten translation of
$\rep(\overrightarrow{A}^{\infty}_{\infty})$. The following is an
easy observation.

\begin{lemma} There is an equivalence of categories between
$\Grmod(\Gamma)$ and $\Rep(Q)$ such that it restricts to an
equivalence between $\grmod(\Gamma)$ and $\rep(Q)$ and the following
diagrams are commutative
\[\xymatrix{\Grmod(\Gamma)\ar[r]^{\sim}\ar[d]^{\langle 1\rangle}&\Rep(Q)\ar[d]^{\sigma}\\
\Grmod(\Gamma)\ar[r]^{\sim}&\Rep(Q)}\qquad\qquad
\xymatrix{\grmod(\Gamma)\ar[r]^{\sim}\ar[d]^{\langle 1\rangle}&\rep(Q)\ar[d]^{\sigma}\\
\grmod(\Gamma)\ar[r]^{\sim}&\rep(Q).}\]
\end{lemma}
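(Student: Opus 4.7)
My plan is to construct the equivalence explicitly by unwrapping a graded $\Gamma$-module into $|d|$ strands indexed by residue classes of the degree modulo $|d|$. For each integer $p$ write uniquely $p = j - id$ with $j \in \{0, \ldots, |d|-1\}$ and $i \in \mathbb{Z}$, and define $\Phi\colon\Grmod(\Gamma)\to\Rep(Q)$ by setting $(\Phi M)_{(j,i)} = M^{j-id}$ and by letting the arrow $(j,i)\to (j,i-1)$ act as multiplication by $t$; this is well-defined because $t$ has degree $d$, so it sends $M^{j-id}$ to $M^{j-(i-1)d}$. A quasi-inverse $\Psi$ collects the vertex spaces of a representation $V$ into the graded module $\bigoplus V_{(j,i)}$, with the degree-$p$ piece equal to $V_{(j,i)}$ for $p = j-id$ and with the $t$-action assembled from the arrow maps. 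The identities $\Psi\Phi = 1$ and $\Phi\Psi = 1$ are tautological, and both functors are exact and preserve direct sums. The equivalence restricts to an equivalence $\grmod(\Gamma)\simeq\rep(Q)$ because a graded module has finite total dimension iff every graded piece is finite-dimensional and almost all vanish, which is exactly pointwise finite-dimensionality with finite support on the vertices of $Q$.

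To verify the left-hand commutative diagram, I compute $(\Phi(M\langle 1\rangle))_{(j,i)} = M^{j-id+1}$. If $0\le j \le |d|-2$, this is the degree-$((j+1)-id)$ piece of $M$, which sits at the vertex $(j+1, i)$. If $j = |d|-1$, then using $|d| = \sgn(d)\cdot d$ one rewrites $j-id+1 = |d|-id = -(i-\sgn(d))d = 0 - (i-\sgn(d))d$, so this piece sits at the vertex $(0, i-\sgn(d))$. On the other hand, pushout along $\sigma$ yields $(\sigma_* V)_{(j,i)} = V_{\sigma^{-1}(j,i)}$, and inverting the piecewise definition of $\sigma$ in the statement gives $\sigma^{-1}(j,i) = (j+1, i)$ for $j \le |d|-2$ and $\sigma^{-1}(|d|-1, i) = (0, i-\sgn(d))$. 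The two assignments match vertex by vertex, and the identification of arrow maps is automatic since under both $\Phi\circ\langle 1\rangle$ and $\sigma_*\circ\Phi$ they descend from the same underlying $t$-action. The right-hand diagram is the restriction of the left one to finite-dimensional objects.

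The main bookkeeping step is the choice of sign convention: taking $p = j - id$ (rather than $p = j + id$) is forced by the appearance of $\sgn(d)$ in the formula for $\sigma$, and once adopted it handles both signs of $d$ uniformly. I therefore expect no essential obstacle beyond tracking this sign carefully.
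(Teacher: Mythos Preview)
Your proof is correct and complete. The paper itself does not give a proof of this lemma, calling it merely ``an easy observation'', so there is nothing to compare against; your explicit construction---decomposing each degree $p$ uniquely as $j-id$ with $0\le j\le |d|-1$ and reading off the $|d|$ strands---is exactly the natural way to make that observation precise, and your verification of the compatibility with $\langle 1\rangle$ and $\sigma$ (including the sign bookkeeping via $\sgn(d)$) is accurate.
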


\subsection{A covering functor}
Let $Q$ and $\sigma$ be as in the preceding subsection. Recall that
$n$ is a positive integer. Let $m$ be the greatest common divisor of
$n$ and $|d|$, and let $n'=\frac{n}{m}$, $d'=\frac{d}{m}$. Let $c$
be the inverse of $d'$ modulo $n'$.

Let $\bar{Q}$ be the quotient quiver of $Q$ under the automorphism
$\sigma^n$. Precisely, $\bar{Q}$ is the disjoint union of $m$-copies
of $\overrightarrow{\Delta}_{n'}$, where
$\overrightarrow{\Delta}_{n'}$ is the cyclic quiver with $n'$
vertices, \ie the quiver
\[\xymatrix@!=0.5pt{& 1\ar[dl]&&2\ar[ll]&\\
0\ar[dr]&&&& \cdot\ar[ul]\\
&n'-1\ar[rr]&&n'-2\ar@{.}[ur] &}\] 
The vertices of $\bar{Q}$ are labeled $(j,i)$, $0\leq j\leq m-1$,
$0\leq i\leq n'-1$. Moreover, the covering map
$C:Q\rightarrow\bar{Q}$ is given by the unique map between quivers
which takes the the following value on vertices
\[C(j,i)=(j-\lfloor\frac{j}{m}\rfloor m, i-\lfloor\frac{j}{m}\rfloor c-\lfloor\frac{i-\lfloor\frac{j}{m}\rfloor c}{n'}\rfloor n').\]
The map $C$ induces a pair of adjoint triangle functors
\[\xymatrix{\cd(\Rep Q)\ar@<.7ex>[r]^{C_*}&\cd(\Rep \bar{Q})\ar@<.7ex>[l]^{C^*},}\]
where $C^*$ is the pull-back functor and $C_*$ is the push-out
functor:
\[C_*(X)_{(\overline{i},\overline{j})}=\bigoplus_{C(i,j)=(\overline{i},\overline{j})}X_{(i,j)},~~
\text{for}~~ X\in\cd(\Rep
Q)~~\text{and}~~(\overline{i},\overline{j})\in\bar{Q}_0.\] The
functor $C_*$ is essentially surjective, and restricts to a triangle
functor $\cd^b(\rep Q)\rightarrow\cd^b(\rep \bar{Q})$ (here by
$\rep$ we mean the category of finite-dimensional nilpotent
representations). By abuse of notation, we denote $C=C_*$. It is
easy to prove

\begin{lemma}\label{l:orbit-map} Let $X$ be an object of $\cd(\Rep Q)$. Then
\[C^*C(X)=\bigoplus_{p\in\mathbb{Z}}\sigma^{np}(X).\]
\end{lemma}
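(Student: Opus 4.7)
The plan is to verify the claimed identity directly from the defining formulas for the pull-back $C^*$ and the push-out $C_*=C$, by computing both sides component-wise on vertices and on arrows of $Q$. Since both functors are exact, commute with direct sums, and are defined object-wise on representations, it suffices to exhibit a natural isomorphism at the level of underlying representations of $Q$.

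First I would unpack $C^*$. For a representation $Y$ of $\bar Q$, the pull-back $C^*(Y)$ assigns to a vertex $v\in Q_0$ the vector space $Y_{C(v)}$ and to an arrow $\alpha:v\to v'$ the structure map of $Y$ at $C(\alpha)$. Combining this with the given formula $C(X)_{\bar v}=\bigoplus_{C(v)=\bar v}X_{v}$, I obtain
\[
C^{*}C(X)_{v}\;=\;\bigoplus_{C(v')=C(v)}X_{v'}.
\]

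Next I would identify the fiber $C^{-1}(C(v))$. By construction $\bar Q=Q/\sigma^{n}$, and from the explicit formula for $\sigma$ in Section~\ref{ss:hereditary-algebra} one sees that $\sigma$ acts freely on $Q_{0}$; hence $\sigma^{n}$ acts freely as well. Consequently the fiber over $C(v)$ is exactly the orbit $\{\sigma^{np}(v)\mid p\in\mathbb{Z}\}$, and the parametrization by $p$ is bijective, giving
\[
C^{*}C(X)_{v}\;=\;\bigoplus_{p\in\mathbb{Z}}X_{\sigma^{np}(v)}.
\]
On the other hand, the push-out along $\sigma^{np}$ satisfies $\sigma^{np}(X)_{v}=X_{\sigma^{-np}(v)}$, so after the reindexing $p\mapsto -p$ the direct sum $\bigoplus_{p\in\mathbb{Z}}\sigma^{np}(X)_{v}$ equals the same expression. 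This yields a canonical identification of vector spaces at each vertex, natural in $X$.

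The comparison on arrows is then forced by the same bookkeeping: for an arrow $\alpha:v\to v'$ of $Q$, the structure map of $C^{*}C(X)$ at $\alpha$ is the component of the structure map of $C(X)$ at $C(\alpha)$ indexed by the pair $(v,v')$ inside the fibers of $C$; this component is precisely the direct sum over $p\in\mathbb{Z}$ of the structure maps of $X$ at $\sigma^{np}(\alpha)$, matching the structure maps of $\bigoplus_{p}\sigma^{np}(X)$ after the same reindexing. There is no substantive obstacle here; the only thing requiring care is the convention for the push-out, which determines the direction of the reindexing and ensures that the identification is natural in $X$, so that the statement holds as an isomorphism of functors rather than only at the level of isomorphism classes.
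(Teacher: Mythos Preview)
Your argument is correct: the direct componentwise verification using that $\bar{Q}=Q/\sigma^{n}$ with $\sigma^{n}$ acting freely on $Q_{0}$ is exactly the routine computation the paper has in mind, which is why the lemma is stated there without proof (``It is easy to prove''). Your handling of the passage from representations to the derived category via exactness, and of the reindexing coming from the push-out convention, are both fine.
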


 Let $\overline{\sigma}:\bar{Q}\rightarrow\bar{Q}$
be the unique automorphism of $\bar{Q}$ taking the following value
on vertices
\begin{eqnarray*}\bar{\sigma}(j,i)&=&(j-1-\lfloor\frac{j-1}{m}\rfloor
m,i-\lfloor\frac{j-1}{m}\rfloor
c-\lfloor\frac{i-\lfloor\frac{j-1}{m}\rfloor c}{n'}\rfloor n')\\
&=&\begin{cases} (j-1,i) & \text{ if } 1\leq j\leq m-1\\
(m-1,i+c-\lfloor\frac{i+c}{n'}\rfloor n') & \text{ if }
j=0.\end{cases}\end{eqnarray*} Then the following diagram is
commutative
\[\xymatrix{Q\ar[r]^C\ar[d]^{\sigma^{-1}}&\bar{Q}\ar[d]^{\overline{\sigma}^{\hspace{1pt}-1}}\\
Q\ar[r]^C&\bar{Q},}\] and induces a commutative diagram of triangle
functors
\[\xymatrix{\cd^b(\grmod\Gamma)\ar[d]^{\Sigma\circ\langle-1\rangle}\ar[r]^{\sim}&\cd^b(\rep Q)\ar[d]^{\Sigma\circ\sigma^{-1}}
\ar[r]^C&\cd^b(\rep \bar{Q})\ar[d]^{\Sigma\circ\overline{\sigma}^{\hspace{1pt}-1}}\\
\cd^b(\grmod \Gamma)\ar[r]^{\sim}&\cd^b(\rep
Q)\ar[r]^C&\cd^b(\rep\bar{Q}).}\] Here, by abuse of notation, we
denote by $\overline{\sigma}$ the push-out functor along the
automorphism $\overline{\sigma}$. By
Theorem~\ref{t:keller's-theorem}, the orbit category $\cd^b(\rep
\bar{Q})/\Sigma\circ\bar{\sigma}^{\hspace{1pt}-1}$ admits a
canonical triangle structure. Thus by
Lemma~\ref{l:universal-property} and
Proposition~\ref{p:finite-dimensional-derived-category-as-orbit-category},
we obtain a chain of triangle functors
\[\cd_{fd}(\Gamma)\stackrel{\sim}{\longrightarrow}\cd^b(\grmod\Gamma)/\Sigma\circ\langle-1\rangle\stackrel{\sim}{\longrightarrow}
\cd^b(\rep
Q)/\Sigma\circ\sigma^{-1}\stackrel{\bar{C}}{\longrightarrow}\cd^b(\rep
\bar{Q})/\Sigma\circ\bar{\sigma}^{\hspace{1pt}-1}.\] Let $\Phi$ be
the composition of the above three triangle functors.

\begin{lemma}\label{l:bar-phi} Assume that $n$ is even.
\begin{itemize}
\item[(a)] We have a natural isomorphism of triangle functors $\Phi\circ\Sigma^n\simeq\Phi$.
\item[(b)] The functor $\Phi$ is essentially surjective.
\item[(c)] The triangle functor $\Phi$ induces a bijection for any objects $M$ and $N$ of $\cd_{fd}(\Gamma)$
\[\bigoplus_{p\in\mathbb{Z}}\Hom_{\cd_{fd}(\Gamma)}(M,\Sigma^{pn}N)\longrightarrow\Hom_{\cd^b(\rep\bar{Q})/\Sigma\circ\bar{\sigma}^{\hspace{1pt}-1}}(\Phi(M),\Phi(N)).\]\end{itemize}
\end{lemma}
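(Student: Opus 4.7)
The plan is to prove the three parts with the construction of the natural isomorphism in (a) coming first, as it is needed even to define the map in (c); part (b) is immediate from the three-functor factorisation of $\Phi$, while (c) is a Hom-space computation using the adjunction for $C$ and Lemma~\ref{l:orbit-map}. The hypothesis that $n$ is even enters only in (a), through a sign coming from iterating triangulated commutation morphisms.

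For (a), the isomorphism $\Phi\Sigma^n \simeq \Phi$ is produced in two steps. Under the equivalences, $\Sigma$ on $\cd_{fd}(\Gamma)$ corresponds to $\Sigma$ on the orbit category $\cd^b(\rep Q)/\Sigma\sigma^{-1}$; the canonical isomorphism $\Sigma\sigma^{-1} \simeq \id$ of the orbit category then yields $\Sigma \cong \sigma$, and iterating gives $\Sigma^n \cong \sigma^n$. Next, the strict quiver identity $C\sigma^n = C$ (valid because $\bar{Q} = Q/\sigma^n$) lifts to a natural isomorphism of pushforward functors on representations, providing $C$ with a $(\sigma^n, \id)$-equivariant structure which by the discussion in Section~\ref{ss:induced-functor} descends to $\bar{C}\sigma^n \cong \bar{C}$. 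Composing the two steps gives $\Phi\Sigma^n \simeq \Phi$. The main obstacle is the sign bookkeeping in the first step: iterating the natural isomorphism $\Sigma\sigma^{-1} \cong \id$ accumulates a sign $(-1)^n$ coming from the triangulated commutation morphisms $\phi_{ij}$ in Section~\ref{ss:induced-functor} (in the notation there, $\bar{F}_1\bar{F}_2 \cong \Delta(\epsilon_1\epsilon_2\id_F)$ contributes such a sign per pair of factors), and the hypothesis that $n$ is even is exactly what makes this accumulated sign trivial.

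Part (b) follows because each of the three constituents of $\Phi$ is essentially surjective: the first two are equivalences (the inverse of Proposition~\ref{p:finite-dimensional-derived-category-as-orbit-category} and the equivalence induced by $\grmod\Gamma \simeq \rep Q$), and $\bar{C}$ is essentially surjective by Lemma~\ref{l:universal-property}(a) because $C$ itself is.

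For (c), I will expand both sides of the candidate bijection in $\cd^b(\rep Q)$. On the left, Lemma~\ref{l:tot}(c) together with the equivalence $\grmod\Gamma \simeq \rep Q$ carrying $\langle 1\rangle$ to $\sigma$ gives
\[\bigoplus_{p\in\Z}\Hom_{\cd_{fd}(\Gamma)}(M,\Sigma^{pn}N) = \bigoplus_{p,r\in\Z}\Hom_{\cd^b(\rep Q)}(M, \Sigma^{pn+r}\sigma^{-r}N).\]
On the right, the definition of the orbit category yields $\Hom(\Phi M, \Phi N) = \bigoplus_{p'\in\Z}\Hom_{\cd^b(\rep\bar{Q})}(CM, \Sigma^{p'}\bar{\sigma}^{-p'}CN)$, and the adjunction $(C, C^*)$ combined with $C^*\bar{\sigma}^{-1} \cong \sigma^{-1}C^*$ (a consequence of $C\sigma = \bar{\sigma}C$) and Lemma~\ref{l:orbit-map} in the form $C^*CN = \bigoplus_{q\in\Z}\sigma^{nq}N$ converts this to
\[\bigoplus_{p',q\in\Z}\Hom_{\cd^b(\rep Q)}(M, \Sigma^{p'}\sigma^{nq - p'}N).\]
The substitution $q = p$, $p' = pn + r$ is a bijection $\Z \times \Z \to \Z \times \Z$ matching the two expressions summand-by-summand, and this identification is exactly what $\Phi$ induces via the two equivalences and the adjunction unit of $(C, C^*)$ together with the isomorphism from (a).
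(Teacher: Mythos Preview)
Your proposal is correct and follows essentially the same approach as the paper. The differences are only organizational: in (a) you work in the intermediate orbit category $\cd^b(\rep Q)/\Sigma\sigma^{-1}$ and then push forward along $\bar C$, whereas the paper stays in the target and writes $\Phi\circ\Sigma^n\cong\Sigma^n\circ\Phi\cong\Delta((-1)^n)\circ\bar\sigma^{\,n}\circ\Phi\cong\Phi$, using $\bar\sigma^{\,n}=\id$ on $\bar Q$; in (c) you unfold both sides into a double sum and match them by the reindexing $(r',q)\leftrightarrow(r+pn,p)$, while the paper first reduces to the single $C$-level bijection $\bigoplus_{p}\Hom_{\cd^b(\rep Q)}(X,\sigma^{np}Y)\to\Hom_{\cd^b(\rep\bar Q)}(CX,CY)$ and then invokes the adjunction $(C,C^*)$ together with Lemma~\ref{l:orbit-map}. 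One small imprecision: in (a) the ``canonical isomorphism'' on the orbit category is $\bar\Sigma\circ\bar\sigma^{-1}\cong\Delta(-\id)$ rather than $\id$ (by Section~\ref{ss:induced-functor} with $\epsilon_1=-1$, $\epsilon_2=1$), which is exactly the source of the sign you later account for.
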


\begin{proof}
(a) The triangle structures of $\Sigma$ and $\bar{\sigma}$ yield the
following commutation morphisms
\[-\id_{\Sigma^2}:\Sigma\circ\Sigma\rightarrow\Sigma\circ\Sigma,\qquad\id_{\bar{\sigma}^{-2}}:\bar{\sigma}^{\hspace{1pt}-1}\circ\bar{\sigma}^{\hspace{1pt}-1}\rightarrow\bar{\sigma}^{\hspace{1pt}-1}\circ\bar{\sigma}^{\hspace{1pt}-1},\]
\[\phi_{12}:\Sigma\circ\bar{\sigma}^{\hspace{1pt}-1}\rightarrow\bar{\sigma}^{\hspace{1pt}-1}\circ\Sigma,\qquad \phi_{21}:\bar{\sigma}^{\hspace{1pt}-1}\circ\Sigma\rightarrow\Sigma\circ\bar{\sigma}^{\hspace{1pt}-1}.\]
By Section~\ref{ss:induced-functor}, we have two induced (triangle)
auto-equivalences of
$\cd^b(\rep\bar{Q})/\Sigma\circ\bar{\sigma}^{\hspace{1pt}-1}$,
which, by abuse of notation, will still be denoted by $\Sigma$ and
$\bar{\sigma}^{-1}$. Moreover,
$\Sigma\circ\bar{\sigma}^{-1}\cong\Delta(-\id_{\Sigma\circ\bar{\sigma}^{\hspace{1pt}-1}})$.

Now since $\Phi$ is a triangle functor, it follows that
\[\Phi\circ\Sigma^n\cong \Sigma^n\circ\Phi\cong
\Delta((-1)^n\id_{\Sigma\circ\bar{\sigma}^{\hspace{1pt}-1}})\circ\bar{\sigma}^n\circ\Phi\cong\Phi.\]

(b) In view of Lemma~\ref{l:universal-property}, this is because $C$
is essentially surjective.

(c) It suffices to prove that the functor $C$ induces a bijection
for any object $X$ and $Y$ of $\cd^b(\rep Q)$
\[\bigoplus_{p\in\mathbb{Z}}\Hom_{\cd^b(\rep Q)}(X,\sigma^{pn}Y)\longrightarrow\Hom_{\cd^b(\rep\bar{Q})}(C(X),C(Y)).\]
By Lemma~\ref{l:orbit-map}, the space on the left is isomorphic to
$\Hom_{\cd^b(\rep Q)}(X,C^*C(Y))$. Thus the bijectivity of the map
under investigation follows from the adjointness of $C$ and $C^*$.
\end{proof}

\begin{proposition}\label{p:characterization}  Let $n\in\mathbb{N}$ be even. We have a triangle equivalence
\[\bar{\Phi}:\cd_{fd}(\Gamma)/\Sigma^n\longrightarrow\cd^b(\rep\bar{Q})/\Sigma\circ\bar{\sigma}^{\hspace{1pt}-1}.\]
\end{proposition}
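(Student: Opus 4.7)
My plan is to deduce the proposition as an almost immediate consequence of Lemma~\ref{l:bar-phi}, together with the universal property of triangulated orbit categories recorded in Lemma~\ref{l:universal-property}. The strategy is: first produce the functor $\bar\Phi$ from $\Phi$, then verify essential surjectivity and full faithfulness one after the other.

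The first step is to check that the orbit category on the source side is actually triangulated in a canonical way: this is provided by Theorem~\ref{t:the-triangle-structure} for $\cd_{fd}(\Gamma)/\Sigma^n$, while on the target side $\cd^b(\rep\bar Q)/\Sigma\circ\bar\sigma^{-1}$ carries a canonical triangle structure by Keller's Theorem~\ref{t:keller's-theorem} (the functor $\Sigma\circ\bar\sigma^{-1}$ satisfies its hypotheses on the hereditary category $\rep\bar Q$). With both orbit categories triangulated, I invoke the discussion in Section~\ref{ss:universal-property}: Lemma~\ref{l:bar-phi}(a) supplies a triangle functor $\Phi:\cd_{fd}(\Gamma)\to\cd^b(\rep\bar Q)/\Sigma\circ\bar\sigma^{-1}$ equipped with a natural isomorphism $\Phi\simeq\Phi\circ\Sigma^n$, and the universal property then gives a triangle functor
\[\bar\Phi:\cd_{fd}(\Gamma)/\Sigma^n\longrightarrow\cd^b(\rep\bar Q)/\Sigma\circ\bar\sigma^{-1}\]
together with an isomorphism $\bar\Phi\simeq\bar\Phi\circ\pi_{\cd_{fd}(\Gamma)}$.

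The second step is essential surjectivity: by Lemma~\ref{l:universal-property}(a), $\bar\Phi$ is essentially surjective as soon as $\Phi$ is, and this is exactly the content of Lemma~\ref{l:bar-phi}(b). The third step is full faithfulness: by Lemma~\ref{l:universal-property}(b), it suffices to know that for every pair of objects $M,N$ of $\cd_{fd}(\Gamma)$ the canonical map
\[\bigoplus_{p\in\mathbb{Z}}\Hom_{\cd_{fd}(\Gamma)}(M,\Sigma^{pn}N)\longrightarrow\Hom_{\cd^b(\rep\bar Q)/\Sigma\circ\bar\sigma^{-1}}(\Phi(M),\Phi(N))\]
induced by $\Phi$ is bijective; this is precisely Lemma~\ref{l:bar-phi}(c). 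Combining these two properties yields that $\bar\Phi$ is a triangle equivalence, as required.

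There is essentially no hard remaining step: all the substantive work has been packaged into Lemma~\ref{l:bar-phi}, whose proof in turn reduced everything to the evenness of $n$ (used to trivialize the commutator of $\Sigma$ and $\bar\sigma^{-1}$, making $\Sigma^n\circ\Phi\cong\Phi$), the surjectivity of the covering $C$, and the adjunction $(C, C^*)$ combined with Lemma~\ref{l:orbit-map}. The only minor technical caveat I would mention explicitly is the dg-lift hypothesis in Lemma~\ref{l:universal-property}: as the remark following that lemma indicates, in the present setting $\Phi$ arises from the composition of an honest triangle equivalence with the totalization functor of bicomplexes and the push-forward along $C$, all of which admit obvious dg models, so the existence of a dg lift is routine and can be assumed without further comment.
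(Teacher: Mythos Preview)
Your argument is correct and follows exactly the paper's own proof, which simply cites Lemma~\ref{l:universal-property} and Lemma~\ref{l:bar-phi}; you have merely unpacked the citation into its three constituent parts (construction of $\bar\Phi$, essential surjectivity, full faithfulness). One small typo: the factorization isomorphism should read $\Phi\simeq\bar\Phi\circ\pi_{\cd_{fd}(\Gamma)}$ rather than $\bar\Phi\simeq\bar\Phi\circ\pi_{\cd_{fd}(\Gamma)}$.
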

\begin{proof} This follows from Lemma~\ref{l:universal-property}
and Lemma~\ref{l:bar-phi}.
\end{proof}

\subsection{The characterization}
Recall that $m$ is the greatest common divisor of $n$ and $|d|$,
$n'=\frac{n}{m}$, $d'=\frac{d}{m}$, and $c$ is the inverse of $d'$
modulo $n'$.

Let $\tau$ be the Auslander--Reiten translation of
$\cd^b(\rep\overrightarrow{\Delta}_{n'})$. It follows from
Theorem~\ref{t:keller's-theorem} that the orbit category
$\cd^b(\rep\overrightarrow{\Delta}_{n'})/\tau^c\circ\Sigma^m$ admits
a canonical triangle structure. Let $\pi:\cd^b(\rep\overrightarrow{\Delta}_{n'})\rightarrow\cd^b(\rep\overrightarrow{\Delta}_{n'})/\tau^c\circ\Sigma^m$ denote the canonical projection functor. Recall that $\bar{Q}$ is the disjoint union of
$m$-copies of $\overrightarrow{\Delta}_{n'}$. So an object $X$ of $\cd^b(\rep\bar{Q})$ can be written as an ordered sequence $X=(X_0,\ldots,X_{m-1})$, where $X_0,\ldots,X_{m-1}\in\cd^b(\rep\overrightarrow{\Delta}_{n'})$. We define a triangle functor $\Pi:\cd^b(\rep\bar{Q})\rightarrow\cd^b(\rep\overrightarrow{\Delta}_{n'})$ by setting $\Pi(X)=\bigoplus_{j=0}^{m-1}\Sigma^{m-1-j}X_j$. Let $\Psi=\pi\circ\Pi$ be the composition.

\begin{lemma}\label{l:simplification}
The functor $\Psi:\cd^b(\rep\bar{Q})\rightarrow\cd^b(\rep\overrightarrow{\Delta}_{n'})/\tau^c\circ\Sigma^m$ induces a triangle equivalence
\[\bar{\Psi}:\cd^b(\rep\bar{Q})/\Sigma\circ\bar{\sigma}^{\hspace{1pt}-1}\longrightarrow\cd^b(\rep\overrightarrow{\Delta}_{n'})/\tau^c\circ\Sigma^m.\]
\end{lemma}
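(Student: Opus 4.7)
The plan is first to verify that $\Psi$ factors through the orbit on the source, so that the universal property (Lemma~\ref{l:universal-property}) yields the triangle functor $\bar\Psi$, and then to establish essential surjectivity and fully faithfulness separately.

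For the factoring, I would unwind the definitions to compute that $\bar\sigma^{\hspace{1pt}-1}$ cyclically permutes the components of $X=(X_0,\ldots,X_{m-1})$ and applies a power of $\tau^{\pm c}$ to the component that crosses the wrap-around at the $0$-th cycle (the exact sign being determined by the conventions in Section~\ref{ss:standard-tubes} relating $c_*$ and $\tau$). A direct calculation then shows that $\Pi(\Sigma\bar\sigma^{\hspace{1pt}-1}X)$ differs from $\Pi X$ only in that a single summand $X_{m-1}$ is replaced by a summand of the form $\tau^{\pm c}\Sigma^m X_{m-1}$, which the projection $\pi$ identifies with the original. This yields a natural isomorphism $\Psi\cong\Psi\circ\Sigma\bar\sigma^{\hspace{1pt}-1}$, and Lemma~\ref{l:universal-property} produces the induced triangle functor $\bar\Psi$. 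Essential surjectivity is then immediate: every object of the target orbit has the form $\pi(Y)$, and $\pi(Y) = \Psi(0,\ldots,0,Y)$.

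For fully faithfulness, I would use that $\rep \bar{Q}$ is the product of $m$ copies of $\rep \overrightarrow{\Delta}_{n'}$, so that $\Hom_{\cd^b(\rep \bar{Q})}(X,Y') = \bigoplus_j \Hom_{\cd^b(\rep \overrightarrow{\Delta}_{n'})}(X_j, Y'_j)$. Iterating the factoring calculation, the $j$-th component of $(\Sigma\bar\sigma^{\hspace{1pt}-1})^p Y$ is $\Sigma^p \tau^{\pm c\alpha(p,j)} Y_{(j-p)\bmod m}$, where $\alpha(p,j)$ counts the number of wraparounds: writing $p = qm+r$ with $0\le r<m$, one has $\alpha(p,j)=q$ if $r\le j$ and $\alpha(p,j)=q+1$ otherwise. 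On the target side,
\[
\bigoplus_q \Hom\bigl(\Pi X,(\tau^c\Sigma^m)^q\Pi Y\bigr) \;=\; \bigoplus_{q,i,j}\Hom\bigl(X_i,\Sigma^{i-j+qm}\tau^{cq}Y_j\bigr).
\]
Reparameterizing by $p=i-j+qm$ with $j=(i-p)\bmod m$, the two natural cases on $q$ (according as the residue $r$ of $p$ modulo $m$ satisfies $r\le i$ or not) correspond exactly to the two cases for $\alpha(p,i)$, so the two Hom-spaces agree summand by summand. The main obstacle is precisely this last step: matching the powers of $\tau^c$ under the reparametrization requires careful case analysis on the cyclic index, and a sign error in the factoring calculation would propagate throughout. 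A clean way to organize the proof would be to first verify that $F^m=(\Sigma\bar\sigma^{\hspace{1pt}-1})^m$ coincides (up to the identifications in Section~\ref{ss:induced-functor}) with $\tau^c\Sigma^m$ acting diagonally on components, which reduces the final comparison to a single elementary identity.
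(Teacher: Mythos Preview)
Your proposal is correct and follows essentially the same route as the paper: first exhibit the natural isomorphism $\Psi\simeq\Psi\circ\Sigma\circ\bar\sigma^{\,-1}$ by computing that only the $X_{m-1}$-summand changes (to $\tau^c\Sigma^m X_{m-1}$), then invoke Lemma~\ref{l:universal-property} and compare Hom-spaces by explicitly working out the components of $(\Sigma\circ\bar\sigma^{\,-1})^pY$ and reindexing. The only cosmetic difference is that the paper parametrizes via $p'=\lceil p/m\rceil$ and $j_p=m+p-mp'$, whereas you use $p=qm+r$ with $q=\lfloor p/m\rfloor$; these are equivalent, and your wrap-count $\alpha(p,j)$ agrees with the paper's exponents $p'$ versus $p'-1$ case by case. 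Your closing suggestion to first check that $(\Sigma\circ\bar\sigma^{\,-1})^m$ acts as $\tau^c\Sigma^m$ diagonally is a sensible organizational shortcut that the paper does not isolate, but the underlying computation is the same.
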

\begin{proof} We claim that the following diagram is commutative
\[\xymatrix{
\cd^b(\rep\bar{Q})\ar[r]^(0.35){{\Psi}}\ar[d]^{\Sigma\circ\bar{\sigma}^{\hspace{1pt}-1}} & \cd^b(\rep \overrightarrow{\Delta}_{n'})/\tau^c\circ\Sigma^m\ar@{=}[d] &\\
\cd^b(\rep\bar{Q})\ar[r]^(0.35){{\Psi}} &\cd^b(\rep
\overrightarrow{\Delta}_{n'})/\tau^c\circ\Sigma^m.}\]
Indeed, for $X=(X_0,\ldots,X_{m-1})\in\cd^b(\rep\bar{Q})$, we have
\begin{eqnarray*}
\Psi(X)&=&\pi(\bigoplus_{j=0}^{m-1}\Sigma^{m-1-j}X_j),\\
\Psi\circ\Sigma\circ\bar{\sigma}^{\hspace{1pt}-1}(X)&=&\Psi(\tau^c\circ\Sigma X_{m-1},\Sigma X_1,\ldots,\Sigma X_{m-2})\\
&=&\pi(\tau^c\circ\Sigma^m X_{m-1}\oplus\bigoplus_{j=0}^{m-2}\Sigma^{m-1-j}X_j).
\end{eqnarray*}
The natural isomorphism $\Psi\simeq\Psi\circ\Sigma\circ\bar{\sigma}^{\hspace{1pt}-1}$ is then induced from the natural isomorphism $\pi\simeq\pi\circ\tau^c\circ\Sigma^m$. It is clear that $\bar{\Psi}$ is essentially surjective because so is $\Psi$. Moreover, $\bar{\Psi}$ induces an identity of morphism spaces
\begin{eqnarray*}
\Hom_{\cd^b(\rep\overrightarrow{\Delta}_{n'})/\tau^c\circ\Sigma^m}(\bar\Psi X,\bar\Psi Y)&=&\bigoplus_{p\in\mathbb{Z}}\Hom_{\cd^b(\rep\bar{Q})}(X,(\Sigma\circ\bar{\sigma}^{\hspace{1pt}-1})^{p}Y).
\end{eqnarray*}
Applying
Lemma~\ref{l:universal-property} yields the desired result: $\bar{\Psi}$ is a triangle equivalence. It remains to prove the above identity. Writing $X=(X_0,\ldots,X_{m-1})$ and $Y=(Y_0,\ldots,Y_{m-1})$, it follows by induction that for $p\in\mathbb{Z}$
\[(\Sigma\circ\bar{\sigma}^{\hspace{1pt}-1})^pY=\Sigma^p(\tau^{cp'}Y_{m-j_p},\ldots,\tau^{cp'}Y_{m-1},\tau^{c(p'-1)}Y_0,\ldots,\tau^{c(p'-1)}Y_{m-j_p-1}),\]
where $p'=\lceil\frac{p}{m}\rceil$ is the smallest integer greater than equal to $\frac{p}{m}$, and $j_p=m+p-mp'$.
Therefore (below $(?,?)=\Hom_{\cd^b(\rep\overrightarrow{\Delta}_{n'})}(?,?)$)
\begin{eqnarray*}
RHS
&=&\bigoplus_{p\in\mathbb{Z}}(\bigoplus_{j=0}^{j_p-1}(X_j,\Sigma^p\circ\tau^{cp'}Y_{m-j_p+j})
\oplus\bigoplus_{j=j_p}^{m-1}(X_j,\Sigma^p\circ\tau^{c(p'-1)}Y_{-j_p+j}))\\
&=&\bigoplus_{p'\in\mathbb{Z}}\bigoplus_{j_p=1}^m(\bigoplus_{j=0}^{j_p-1}(X_j,\Sigma^{j_p-m+mp'}\circ\tau^{cp'}Y_{m-j_p+j})
\oplus\bigoplus_{j=j_p}^{m-1}(X_j,\Sigma^{j_p-m+mp'}\circ\tau^{c(p'-1)}Y_{-j_p+j}))\\
&=&\bigoplus_{p'\in\mathbb{Z}}\bigoplus_{j=0}^{m-1}(\bigoplus_{j_p=j+1}^m(X_j,\Sigma^{j_p-m+mp'}\circ\tau^{cp'}Y_{m-j_p+j})
\oplus\bigoplus_{j_p=1}^j(X_j,\Sigma^{j_p-m+mp'}\circ\tau^{c(p'-1)}Y_{-j_p+j}))\\
&=&\bigoplus_{p'\in\mathbb{Z}}\bigoplus_{j=0}^{m-1}(\bigoplus_{j'=j}^{m-1}(X_j,\Sigma^{j-j'+mp'}\circ\tau^{cp'}Y_{j'})
\oplus\bigoplus_{j'=0}^{j-1}(X_j,\Sigma^{j-j'+m(p'-1)}\circ\tau^{c(p'-1)}Y_{j'}))\\
&=&\bigoplus_{p'\in\mathbb{Z}}\bigoplus_{j=0}^{m-1}\bigoplus_{j'=0}^{m-1}(X_j,\Sigma^{j-j'+mp'}\circ\tau^{cp'}Y_{j'}).
\end{eqnarray*}
On the other hand, we have
\begin{eqnarray*}
LHS&=&\bigoplus_{p\in\mathbb{Z}}(\Psi X,(\tau^c\circ\Sigma^m)^{p}\Psi Y)\\
&=&\bigoplus_{p\in\mathbb{Z}}(\bigoplus_{j=0}^{m-1}\Sigma^{m-1-j}X_j,(\tau^c\circ\Sigma^m)^{p}\bigoplus_{j=0}^{m-1}\Sigma^{m-1-j}Y_j)\\
&=&\bigoplus_{p\in\mathbb{Z}}\bigoplus_{j=1}^{m-1}\bigoplus_{j'=0}^{m-1}(X_j,\tau^{cp}\circ\Sigma^{j-j'+mp}Y_{j'}).
\end{eqnarray*}
\end{proof}

Combining Proposition~\ref{p:characterization} and
Lemma~\ref{l:simplification}, we obtain the main result of this
section.

\begin{theorem}\label{t:characterization} Let $n\in\mathbb{N}$ be even. Then we have a triangle equivalence \[\bar{\Psi}\circ\bar{\Phi}:\cd_{fd}(\Gamma)/\Sigma^n\longrightarrow\cd^b(\rep\overrightarrow{\Delta}_{n'})/\tau^c\circ\Sigma^m.\]
\end{theorem}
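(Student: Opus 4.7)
The plan is straightforward: the theorem is essentially a composition statement, since both $\bar{\Phi}$ and $\bar{\Psi}$ have already been established as triangle equivalences in Proposition~\ref{p:characterization} and Lemma~\ref{l:simplification} respectively. So the proof just needs to verify that the composition is well-defined and inherits the triangle equivalence property.

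First, I would observe that Proposition~\ref{p:characterization} gives a triangle equivalence
\[\bar{\Phi}:\cd_{fd}(\Gamma)/\Sigma^n\longrightarrow\cd^b(\rep\bar{Q})/\Sigma\circ\bar{\sigma}^{\hspace{1pt}-1},\]
obtained by descending through the chain $\cd_{fd}(\Gamma)\simeq\cd^b(\grmod\Gamma)/\Sigma\circ\langle-1\rangle\simeq\cd^b(\rep Q)/\Sigma\circ\sigma^{-1}$ and then applying the covering functor $C$. The key inputs for this were: (i) the covering-theoretic identity $C^*C(X)=\bigoplus_{p\in\mathbb{Z}}\sigma^{np}(X)$ from Lemma~\ref{l:orbit-map}, combined with adjointness of $(C,C^*)$, to get full faithfulness on the orbit category; (ii) essential surjectivity of $C$; and (iii) the evenness of $n$, used in Lemma~\ref{l:bar-phi}(a) to ensure the sign $(-1)^n=1$ so that the induced functor $\Phi$ really factors through $\Sigma^n$ on the nose.

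Next, Lemma~\ref{l:simplification} produces a triangle equivalence
\[\bar{\Psi}:\cd^b(\rep\bar{Q})/\Sigma\circ\bar{\sigma}^{\hspace{1pt}-1}\longrightarrow\cd^b(\rep\overrightarrow{\Delta}_{n'})/\tau^c\circ\Sigma^m\]
via the functor $\Psi=\pi\circ\Pi$, with $\Pi(X_0,\dots,X_{m-1})=\bigoplus_{j=0}^{m-1}\Sigma^{m-1-j}X_j$, using the universal property of Lemma~\ref{l:universal-property} after verifying a natural isomorphism $\Psi\simeq\Psi\circ\Sigma\circ\bar{\sigma}^{\hspace{1pt}-1}$ and matching Hom-spaces.

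So Theorem~\ref{t:characterization} follows immediately by taking the composition $\bar{\Psi}\circ\bar{\Phi}$: composition of triangle equivalences is a triangle equivalence. The only point worth noting is that the composition inherits an essentially surjective, fully faithful, triangle-functor structure tautologically; no additional checks are required since the evenness hypothesis on $n$ is already absorbed into Proposition~\ref{p:characterization}. Since all the genuine work (the covering argument and the combinatorial bookkeeping of Hom-space bijections) has been carried out in the supporting lemmas, there is no remaining obstacle—the statement is essentially a packaging step.
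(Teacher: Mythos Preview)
Your proposal is correct and matches the paper's own proof exactly: the paper simply states that the theorem follows by combining Proposition~\ref{p:characterization} and Lemma~\ref{l:simplification}, i.e., by composing the two triangle equivalences $\bar{\Psi}$ and $\bar{\Phi}$. Your additional commentary recapitulating the content of those supporting results is accurate but unnecessary for the proof itself.
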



\end{document}